\documentclass[reqno,12pt]{amsart}

\parskip = \bigskipamount
\usepackage{amssymb}
\usepackage{amscd}
\usepackage[all]{xy}
\usepackage{bbm}
\usepackage{mathrsfs}
\usepackage{enumerate}
\usepackage{tikz}
\usepackage[margin=1.5in]{geometry}
\usepackage{stmaryrd}
\usepackage{etoolbox}
\usepackage{array}

\newcommand{\N}{\mathbb{N}}
\newcommand{\Z}{\mathbb{Z}}
\newcommand{\Q}{\mathbb{Q}}
\newcommand{\R}{\mathbb{R}}

\newcommand{\inv}{^{-1}}

\newcommand{\eps}{\varepsilon}

\newcommand{\del}{\nabla}
\newcommand{\ind}{\operatorname{ind}}

\newcommand{\bd}{\partial}
\newcommand{\cl}{\overline}

\newcommand{\eval}{\bigg\vert}

\newcommand{\la}{\langle}
\newcommand{\ra}{\rangle}

\newcommand{\supp}{\operatorname{supp}}
\newcommand{\dist}{\operatorname{dist}}
\newcommand{\cc}{\subset\subset}
\renewcommand{\div}{\operatorname{div}}

\newcommand{\grad}{\del}
\newcommand{\vol}{\operatorname{Vol}}

\newcommand{\f}{\colon}
\newcommand{\area}{\operatorname{Area}}

\newcommand{\fla}{\mathcal F}

\newcommand{\ric}{\operatorname{Ric}}

\theoremstyle{plain}
\newtheorem{theorem}{Theorem}
\newtheorem{corollary}[theorem]{Corollary}
\newtheorem{prop}[theorem]{Proposition}
\newtheorem{lem}[theorem]{Lemma}

\theoremstyle{definition}
\newtheorem{defn}[theorem]{Definition}
\newtheorem{rem}[theorem]{Remark}

\begin{document}

\title{CMC Doublings of  Minimal Surfaces via Min-Max}
\author{Liam Mazurowski}
\address{University of Chicago, Department of Mathematics, Chicago, IL 60637}
\email{maz@math.uchicago.edu}

\begin{abstract}
Let $\Sigma^2 \subset M^3$ be a minimal surface of index 0 or 1.  Assume that a neighborhood of $\Sigma$ can be foliated by constant mean curvature (cmc) hypersurfaces.  We use  min-max theory and the catenoid estimate to construct $\eps$-cmc doublings of $\Sigma$ for small $\eps > 0$.  Such cmc doublings were previously constructed for minimal hypersurfaces  $\Sigma^n \subset M^{n+1}$ with $n+1\ge 4$ by Pacard and Sun \cite{PS} using gluing methods.  
\end{abstract}

\maketitle

\section{Introduction} 

Let $M$ be a Riemannian manifold.  A minimal hypersurface $\Sigma \subset M$ is a critical point of the area functional on $M$.  A constant mean curvature (cmc) hypersurface is a critical point of the area functional subject to variations that preserve the enclosed volume.  
A fundamental problem in geometry is to construct minimal and cmc hypersurfaces in a given manifold.  

Min-max methods have long proven to be a powerful tool for constructing minimal surfaces.  In 1981, Pitts \cite{P}, building on work of Almgren \cite{A}, used min-max methods to show that every closed manifold $M^{n+1}$ with $3\le n+1 \le 6$ contains a smooth, embedded minimal hypersurface.  Schoen and Simon \cite{SS} improved this to $3\le n+1\le 7$.  In fact, the work of Schoen and Simon shows that every $M^{n+1}$ with $n+1\ge 3$ contains a minimal hypersurface which is smooth and embedded up to a set of codimension 7. 

In 1982, Yau \cite{Y} conjectured that every closed manifold contains infinitely many minimal surfaces.  Marques and Neves devised a program to prove Yau's conjecture by developing a detailed understanding of the Morse theory of the area functional on a manifold.  This program has now been carried out to great success.  In \cite{IMN}, Irie, Marques, and Neves showed that Yau's conjecture is true for a generic metric on $M^{n+1}$ with $3 \le n+1\le 7$.  In fact, they proved more: generically the union of all minimal surfaces in $M$ is dense in $M$.  A crucial ingredient in the proof was the Weyl law for the volume spectrum proven by Liokumovich, Marques, and Neves \cite{LMN}. 

Later Marques, Neves, and Song \cite{MNS} improved the result in \cite{IMN} by showing that, for a generic metric on $M$, some sequence of minimal surfaces becomes equidistributed in $M$.   Gaspar and Guaraco \cite{GG} showed that the Weyl law and equidistribution results also hold in the Allen-Cahn setting.  In the non-generic case, Song \cite{Song} has shown that a closed manifold of dimension $3\le n+1\le 7$ with an arbitrary metric contains infinitely many minimal hypersurfaces.  Thus Yau's conjecture is fully resolved for these dimensions.  In the higher dimensional case, Li \cite{Li} has shown that for a generic metric on $M^{n+1}$ with $n+1\ge 8$ there are infinitely many minimal hypersurfaces of optimal regularity.   

Recently, Zhou \cite{Z} proved the multiplicity one conjecture of Marques and Neves \cite{MN}.  Using this, Marques and Neves \cite{MN} \cite{MN2} were able to prove the following: for a generic metric on $M^{n+1}$ with $3\le n+1\le 7$ there is a smooth, embedded, two-sided, index $p$ minimal hypersurface for every $p\in \N$.  Moreover, the area of these surfaces grows with $p$ according to the Weyl law for the volume spectrum \cite{LMN}.  

Min-max methods for constructing constant mean curvature  surfaces have only been developed more recently.  Fix a number $h > 0$.  Define a functional $A^h$ on open sets in $M$ with smooth boundary by setting
\[
A^h(\Omega) = \area(\bd \Omega) - h \vol( \Omega).  
\]
It is known that the critical points of $A^h$ are precisely those sets $\Omega$ whose boundary has constant mean curvature $h$ with respect to the inward pointing normal vector. In \cite{ZZ}, Zhou and Zhu developed a min-max theory for the $A^h$ functional, and used this theory to show that every closed manifold $M^{n+1}$ with $3\le n+1\le 7$ admits a smooth almost-embedded $h$-cmc hypersurface for every $h > 0$.   In \cite{ZZ1}, Zhou and Zhu extended the theory to construct more general prescribed mean curvature hypersurfaces.    Zhou \cite{Z} used this to give a proof of the multiplicity one conjecture of Marques and Neves \cite{MN}.  Earlier work of Chodosh and Mantoulidis \cite{CM} had shown that the multiplicity one conjecture was true for dimension $n+1=3$ in the Allen-Cahn setting.

Another technique for constructing minimal and constant mean curvature hypersurfaces is the so-called gluing method.  Starting from a collection of nearly minimal surfaces, one joins them together in a carefully chosen manner and then shows that the resulting surface can be perturbed to be minimal (or to have constant mean curvature).   Kapouleas and Yang \cite{KY} used this technique to construct minimal doublings of the Clifford torus in $S^3$.
Kapouleas has also used it to construct constant mean curvature surfaces of high genus in $\R^3$ \cite{K1}, and to construct minimal doublings of the equator in $S^3$ \cite{K}.
In \cite{PS}, Pacard and Sun used gluing methods to construct  constant mean curvature doublings of minimal hypersurfaces.   The following theorem is a special case of their results (see Theorem 2.1 and Corollary 2.1 in \cite{PS}).

\begin{theorem}[Pacard and Sun]
Let $n+1\ge 4$.  Let $\Sigma^n \subset M^{n+1}$ be an embedded minimal hypersurface.   Assume that the Jacobi operator $J$ for $\Sigma$ is invertible, and that the unique solution $\phi$ to $J\phi = 1$ does not change sign, and that $\phi$ has a non-degenerate critical point.  Then for every sufficiently small $\eps > 0$, there is an embedded $\eps$-cmc hypersurface which is a doubling of $\Sigma$. 
\end{theorem}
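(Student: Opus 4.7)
The plan is to construct the $\eps$-cmc doubling as a small perturbation of an explicit approximate solution, following the Pacard--Sun gluing strategy: two nearly parallel sheets joined by a small catenoidal neck at a prescribed point on $\Sigma$.

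\textbf{Approximate sheets.} The hypothesis $J\phi = 1$ is designed so that each sheet can be written as the normal graph over $\Sigma$ of a function close to $\pm\eps\phi$. Indeed, the mean curvature of the normal graph of $u\nu$ over a minimal $\Sigma$ is, to leading order in $u$, the Jacobi operator applied to $u$; thus normal graphs of $\pm \eps \phi$ have mean curvature of size $\eps$ to leading order, with opposite sign conventions depending on the orientation of $\nu$. The assumption that $\phi$ does not change sign ensures that the two sheets are disjoint and bound a thin slab of width $\sim \eps$. One can refine the ansatz by adding higher-order corrections solving inhomogeneous Jacobi equations on $\Sigma$, which is possible because $J$ is invertible.

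\textbf{Neck insertion.} At a non-degenerate critical point $p\in\Sigma$ of $\phi$, the distance between the two sheets is extremal with vanishing first-order variation, so in a geodesic ball of scale $\rho \gg \eps$ around $p$ the configuration looks like two horizontal planes in $\R^{n+1}$ separated by distance $\sim 2\eps \phi(p)$. I would insert a rescaled truncated catenoidal neck of waist $\delta$, choosing $\delta=\delta(\eps)$ so that the two ends of the rescaled catenoid asymptotically match the two sheets within a transition annulus, and then glue the pieces via a smooth cutoff to produce an approximate $\eps$-cmc hypersurface $\tilde\Sigma_\eps$. Because the catenoid in $\R^{n+1}$ with $n\ge 3$ has ends decaying at rate $r^{2-n}$, the matching can be done with a controllable mean-curvature error on the annular transition region.

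\textbf{Perturbation.} I would then seek the true $\eps$-cmc hypersurface as the normal graph over $\tilde\Sigma_\eps$ of a function $v$, reducing to a nonlinear elliptic equation of the schematic form $L_\eps v = E_\eps + Q(v)$, where $L_\eps$ is the linearized mean curvature operator at $\tilde\Sigma_\eps$, $E_\eps$ is the mean-curvature error, and $Q$ collects the nonlinear terms. The construction is closed by a Lyapunov--Schmidt reduction: the operator $L_\eps$ looks like $J$ (invertible by hypothesis) on the sheets away from the neck, and like the catenoid Jacobi operator near the neck, whose approximate kernel consists of the translation and dilation modes. Splitting these modes off into finite-dimensional geometric parameters (position of the neck and size of the neck), the non-degeneracy of the critical point of $\phi$ makes the reduced balancing equation for the position invertible, while the dilation mode is handled by adjusting $\delta$. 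A contraction mapping argument then produces the genuine $\eps$-cmc doubling for all sufficiently small $\eps$.

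\textbf{Main obstacle.} The central difficulty is inverting $L_\eps$ with bounds uniform in $\eps$. This requires weighted Schauder or $L^p$ estimates that interpolate between the $\Sigma$-regime and the catenoid-regime, and the crux is controlling the neck contribution: the bounded Jacobi fields on the catenoid in $\R^{n+1}$ decay like $r^{2-n}$, and only for $n\ge 3$ are these decay rates sufficient for the Lyapunov--Schmidt reduction to close with uniform constants. This is exactly why Pacard and Sun restrict to $n+1\ge 4$, and why the surface case $n=2$ (treated in the remainder of this paper by min-max) demands an entirely different approach.
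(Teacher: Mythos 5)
The statement you are attempting to prove is not proved in this paper: it is a result of Pacard and Sun \cite{PS} (a special case of their Theorem 2.1 and Corollary 2.1), quoted in the introduction as background and motivation. The paper's own contribution is an \emph{alternative} variational construction of cmc doublings in dimensions $3\le n+1\le 7$, using Zhou and Zhu's min-max theory for the $A^\eps$ functional together with the catenoid estimate, rather than gluing. So there is no proof of this statement in the paper to compare against. That said, your blind sketch does correctly reproduce the broad outline of the Pacard--Sun gluing strategy: normal graphs of $\pm\eps\phi$ over $\Sigma$ as nearly-cmc sheets (exploiting $J\phi=1$ and invertibility of $J$ for higher corrections), an $n$-dimensional catenoidal neck inserted at a non-degenerate critical point of $\phi$, and a Lyapunov--Schmidt reduction in which $J$ handles the sheet region, the catenoid Jacobi fields generate the approximate kernel near the neck, and the Hessian of $\phi$ at the critical point closes the finite-dimensional balancing equation for the neck position while $\delta$ is adjusted to kill the dilation mode. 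You also correctly diagnose why the gluing method requires $n+1\ge 4$: the higher-dimensional catenoid has ends asymptotic to parallel planes with $r^{2-n}$ decay, whereas for $n=2$ the ends grow logarithmically, which is precisely the regime the min-max approach of this paper is designed to handle instead. In short, your proposal takes the gluing route of the cited reference, while the paper deliberately takes a different, variational route: gluing gives explicit asymptotics for the neck but is delicate in low dimensions, whereas min-max is softer and works down to $n+1=3$ at the cost of losing a priori control of the topology, which the paper then recovers (when $n=2$) by the blow-up analysis in Theorem \ref{main2}.
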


It is natural to ask whether surfaces produced by gluing methods can also be produced by variational techniques.  In the case of the Clifford torus in $S^3$, Ketover, Marques, and Neves \cite{KMN} proved the catenoid estimate and used it to give a min-max construction of the doublings of Kapouleas and Yang \cite{KY}. 
In this paper we show that, in certain circumstances, cmc doublings like those of Pacard and Sun can be constructed using min-max methods.  Our results apply in the case $3\le n+1\le 7$.  In the remainder of the introduction, we give a heuristic explanation of the min-max construction of cmc doublings.

\subsection{The Stable Case} Fix a dimension $3\le n+1\le 7$.  Suppose that $\Sigma^n\subset M^{n+1}$ is an embedded, two-sided, stable, minimal hypersurface.  Assume that a neighborhood of $\Sigma$ can be foliated by $\beta$-cmcs $\Sigma^\beta$ whose mean curvature vectors point towards $\Sigma$.  Every strictly stable minimal surface admits such a neighborhood by the implicit function theorem and the maximum principle.  A degenerate stable minimal surface may or may not admit such a neighborhood.

Let $\Omega^\eps$ be the open set in between $\Sigma^\eps$ and $\Sigma^{-\eps}$.  Then $\Omega^\eps$ is a critical point of $A^\eps$.  
Moreover, using the second variation formula for $A^\eps$, one can check that $\Omega^\eps$ is strictly stable for $A^\eps$.  
Thus $\Omega^\eps$ is a strict local minimum for $A^\eps$ in the smooth topology. 
Now, by the isoperimetric inequality, the empty set is also a local minimum for $A^\eps$.  Thus one can attempt to do min-max for the $A^\eps$ functional over all 1-parameter families of open sets connecting the empty set to $\Omega^\eps$. 

Theorem \ref{main1} and Theorem \ref{main2} are the main results of this paper in the stable case.  In Theorem \ref{main1}, we formalize the min-max argument outlined above to construct an $\eps$-cmc doubling of $\Sigma$.   
The key tool in the proof is the min-max theory for the $A^\eps$ functional introduced by Zhou and Zhu in \cite{ZZ}.  
We also borrow ideas from previous mountain pass type arguments for minimal surfaces.  See De Lellis and Ramic \cite{DR}, Marques and Neves \cite{MN1}, and Montezuma \cite{M}.  
In the case $n=2$, we are further able to show that the $\eps$-cmc doubling constructed in Theorem \ref{main1} consists of two parallel copies of $\Sigma$ joined by a small catenoidal neck.  This is the content of Theorem \ref{main2}.  The proof of this theorem is based on work of Chodosh, Ketover, and Maximo \cite{CKM}.  

\subsection{The Index 1 Case} Fix a dimension $3 \le n+1 \le 7$.  Let  $\Sigma^n \subset M^{n+1}$ be an embedded, two-sided, index 1, minimal hypersurface.  Let $L$ be the Jacobi operator on $\Sigma$ and assume that $L$ is non-degenerate and that the unique solution $\phi$ to $L\phi = 1$ is positive. 
The assumption that $L$ is non-degenerate together with the fact that $\phi > 0$ implies that a neighborhood of $\Sigma$ is foliated by cmc hypersurfaces.   Again let $\Sigma^\beta$ denote the $\beta$-cmc in this foliation and note that the mean curvature vector of $\Sigma^\beta$ points away from $\Sigma$.  Moreover, the surface $\Sigma^\beta$ lies at a height on the order of $\beta$ over $\Sigma$.  

Now fix a small number $\eps > 0$ and consider an $\eps$-cmc doubling $\Lambda^\eps$ of $\Sigma$.  If $\Lambda^\eps$ arises from the construction of Pacard and Sun, there is a decomposition
\[
\Lambda^\eps = \Lambda_+ \cup \Lambda_- \cup N
\]
where $N$ is a small neck, and $\Lambda_+$ and $\Lambda_-$ are each diffeomorphic to $\Sigma$ with a small ball removed.  The sheet $\Lambda_+$ is the graph of a function of small norm over $\Sigma^\eps$, and the sheet $\Lambda_-$ is the graph of a function of small norm over $\Sigma^{-\eps}$.   From this structure, we expect that the index of $\Lambda^\eps$ is three, where the three deformations decreasing $A^\eps$ correspond to varying the height of $\Lambda_+$, varying the height of $\Lambda_-$, and pinching the neck.  Thus $\Lambda^\eps$ should be the solution to a three parameter min-max problem.  

Based on this, we construct a three parameter family of surfaces $\Phi$ parameterized by the cube 
\[
X = \bigg\{(x,y,t):\, -\frac{\eps}2 \le x \le \frac \eps 2,\ -\frac \eps 2\le y \le \frac \eps 2,\ 0\le t\le R\bigg\},
\] 
where $R \gg \eps$ is a fixed small number. 
To define $\Phi$,  first let 
$
\Phi(0,0,0) =\Sigma^\eps \cup \Sigma^{-\eps}.
$
Think of this as a top sheet $\Sigma^\eps$ at height $\eps$ and a bottom sheet $\Sigma^{-\eps}$ at height $-\eps$.  Then extend $\Phi$ to the rest of $X$ as follows:  changing the $x$-coordinate varies the height of the top sheet by up to $\pm  \eps /2$, changing the $y$-coordinate varies the height of the bottom sheet by up to $\pm  \eps /2$, and increasing the $t$-coordinate opens up a neck between the two sheets.  

This family $\Phi$ has two important properties.  
\begin{itemize} 
\item[(i)] The surface $\Sigma^\eps \cup \Sigma^{-\eps}$ is an index two critical point of $A^\eps$ and the bottom face of the cube $X$ is a two parameter family of deformations that decreases $A^\eps$.\\  

\item[(ii)] The surface $\Sigma^\eps\cup \Sigma^{-\eps}$ maximizes $A^\eps$ over the boundary of $X$.  
\end{itemize}
To see property (ii), first observe that $\Sigma^\eps\cup \Sigma^{-\eps}$ maximizes $A^\eps$ over the bottom face of the cube.  Second, note that by opening a neck up to a fixed size $R \gg \eps$, we can ensure that $A^\eps(S) < A^\eps(\Sigma^\eps \cup \Sigma^{-\eps})$ for every surface $S$ in the top face of the cube.   Finally, consider a surface $T$ in the boundary of the bottom face of the cube.  Since $\Sigma$ is unstable, there is a uniform constant $c$  such that 
\[
A^{\eps}(T) < A^\eps(\Sigma^\eps \cup \Sigma^{-\eps}) - c\eps^2.
\]
On the other hand, by the catenoid estimate of Ketover, Marques, and Neves \cite{KMN}, it is possible to open a neck between the two sheets in $T$ without ever increasing the area by more than $C \eps^2/\vert \log \eps\vert$.  Therefore, we can ensure that $\Sigma^\eps \cup \Sigma^{-\eps}$ also maximizes $A^\eps$ over the side faces of the cube. 

Theorem \ref{main3} and Theorem \ref{main4} are the main results of this paper in the index 1 case.  In Theorem \ref{main3}, we construct $\eps$-cmc surfaces $\Lambda^\eps$ in $M$ by doing min-max for the $A^\eps$ functional over all families of surfaces $\Psi$ parameterized by the cube $X$ with $\Psi = \Phi$ on $\bd X$.  These surfaces $\Lambda^\eps$ have the property that $\area(\Lambda^\eps) \to 2\area(\Sigma)$ as $\eps \to 0$.  In Theorem \ref{main4}, we show that for a generic metric on $M$ the surfaces $\Lambda^\eps$ of Theorem \ref{main3} are doublings of $\Sigma$.

\subsection{Organization} The rest of the paper is organized as follows.  Section \ref{not} reviews some concepts from geometric measure theory as well as some definitions and theorems from Zhou's min-max theory.  Section \ref{sec3} constructs cmc doublings in the stable case.  Section \ref{sec4} constructs cmc doublings in the index 1 case.  Appendix \ref{QM} contains a quantitative minimality theorem that is needed to check that the width of certain homotopy classes is non-trivial.  Appendix \ref{gm} proves that a certain class of metrics is generic. 

\subsection{Acknowledgements} I would like to thank my advisor Andr\'e Neves for his continual encouragement and for many valuable discussions regarding this work.

\section{Notation and Preliminaries} 
\label{not} 

Let $M^{n+1}$ be a smooth, closed Riemannian manifold.  We begin by introducing some tools from geometric measure theory.   
\begin{itemize}
\item The set $\mathcal I_k(M,\Z_2)$ is the space of $k$-dimensional rectifiable flat chains mod 2 in $M$. \\
\item The flat norm on $\mathcal I_k(M,\Z_2)$ is denoted by $\mathcal F$, and the mass norm on $\mathcal I_k(M,\Z_2)$ is denoted by  $\mathbf M$. \\
\item Given $T\in \mathcal I_k(M,\Z_2)$, the notation $\vert T\vert$ stands for the varifold induced by $T$. \\
\item The $\mathbf F$ metric on $\mathcal I_{n+1}(M,\Z_2)$ is defined by 
\[
\mathbf F(\Omega_1,\Omega_2) = \fla(\Omega_1,\Omega_2) + \mathbf F(\vert \bd \Omega_1\vert, \vert \bd \Omega_2\vert)
\]
where $\mathbf F$ on the right hand side is Pitts' $\mathbf F$-metric on varifolds. \\
\item Following Marques and Neves, an embedded minimal cycle in $M$ is defined to be a varifold $V$ of the form 
\[
V = a_1\Gamma_1 + \hdots + a_\ell \Gamma_\ell
\]
where the $\Gamma_i$ are disjoint, smooth, embedded minimal surfaces in $M$ and the $a_i$ are positive integers. \\
\item Given $\eps > 0$, define $A^\eps\f \mathcal I_{n+1}(M,\Z_2) \to \R$ by 
$
A^\eps(\Omega) = \area(\bd \Omega) - \eps \vol(\Omega). 
$
\end{itemize} 

The following definitions are due to Zhou in \cite{Z}.  Let $X$ be a cubical complex and let $Z$ be a subcomplex of $X$.  Fix an $\mathbf F$-continuous map $\Phi\f X\to \mathcal I_{n+1}(M,\Z_2)$.

\begin{defn}
The $(X,Z)$-homotopy class of $\Phi$ consists of all sequences $\{\Psi_i\}_i$ with the following properties.  First, each $\Psi_i$ is an $\mathbf F$-continuous map $X\to \mathcal I_{n+1}(M,\Z_2)$. Second, for each $i$, there is a flat continuous homotopy $H_i\f [0,1]\times X\to \mathcal I_{n+1}(M,\Z_2)$ such that 
\begin{itemize}
\item[(i)] $H_i(0,x) = \Psi_i(x)$,\\
\item[(ii)] $H_i(1,x) = \Phi(x)$,\\
\item[(iii)] $\displaystyle \limsup_{i\to \infty} \left[\sup_{z\in Z,\ t\in[0,1]} \mathbf F(\Phi(z), H_i(t,z))\right] = 0$. 
\end{itemize} 
\end{defn} 

\begin{defn}
Let $\Pi$ be the $(X,Z)$-homotopy class of $\Phi$.  Fix an $\eps > 0$. Given a sequence $\{\Psi_i\}_i$ in $\Pi$ we let 
\[
L^\eps(\{\Psi_i\}_i) = \limsup_{i\to \infty} \left[\max_{x\in X} A^\eps(\Psi_i(x))\right].
\]
The width of the homotopy class $\Pi$ is then defined by
\[
L^\eps(\Pi) = \inf_{\{\Psi_i\}_i \in \Pi} L^\eps(\{\Psi_i\}_i).
\]
\end{defn}

\begin{defn}
\label{ae} 
Let $\Gamma$ be a smooth, immersed, constant mean curvature hypersurface in $M$.  Then $\Gamma$ is said to be almost-embedded provided for every point $p\in M$ either 
\begin{itemize}
\item[(i)] $\Gamma$ is embedded in a neighborhood of $p$, or\\
\item[(ii)] $\Gamma$ decomposes into the union of two embedded pieces $\Gamma_1$ and $\Gamma_2$ in a neighborhood of $p$ with $\Gamma_1$ on one side of $\Gamma_2$. 
\end{itemize}
\end{defn}

The following min-max theorem for the $A^\eps$ functional is due to Zhou.  See Theorem 1.7 and Theorem 3.1 in \cite{Z}. 
\begin{theorem}[Zhou]
\label{mm} 
Assume that the min-max width $\Pi$ is non-trivial, i.e., that 
\[
L^\eps(\Pi) > \max_{z\in Z} A^\eps(\Phi(z)).
\]
Then there is a smooth, almost-embedded $\eps$-cmc hypersurface $\Lambda^\eps$ in $M$, and there is an open set $\Theta^\eps$ in $M$ with $\bd \Theta^\eps = \Lambda^\eps$ and $A^\eps(\Theta^\eps) = L^\eps(\Pi)$.  Moreover, the index of $\Lambda^\eps$ as a critical point of $A^\eps$ is at most the dimension of $X$. 
\end{theorem}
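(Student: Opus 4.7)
The plan is to adapt the Almgren--Pitts--Marques--Neves min-max scheme to the volume-penalized functional $A^\eps$, in the spirit of Zhou and Zhu. First, take a sequence $\{\Psi_i\}_i \in \Pi$ with $L^\eps(\{\Psi_i\}_i) \to L^\eps(\Pi)$. The non-triviality hypothesis $L^\eps(\Pi) > \max_{z \in Z} A^\eps(\Phi(z))$ ensures that for all large $i$ the maximum of $A^\eps \circ \Psi_i$ is attained in the interior of $X$, so all critical behavior is genuinely interior and not inherited from the boundary values $\Phi\rest Z$.

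Next I would perform a \emph{pull-tight}. Define the critical set
\[
\mathcal C = \{V \in \mathcal{V}_n(M):\ V = \lim_i \vert \bd \Psi_i(x_i)\vert,\ \lim_i A^\eps(\Psi_i(x_i)) = L^\eps(\Pi)\}.
\]
Using the first variation of $A^\eps$, construct a continuous assignment $\Omega \mapsto W_\Omega$ of deformation vector fields which strictly decreases $A^\eps$ whenever $\vert \bd \Omega\vert$ is $\mathbf F$-far from $\mathcal C$ and is trivial near $\mathcal C$. Flowing each $\Psi_i$ by these vector fields, one stays in the $(X,Z)$-homotopy class (the deformation is trivial on $Z$ by the interior-maximum observation) and produces a new minimizing sequence every subsequential varifold limit of which is stationary for $A^\eps$, i.e.\ has weak inward mean curvature $\eps$.

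The heart of the argument is the combinatorial almost-minimizing step. Following Zhou, replace the $\mathbf F$-continuous maps $\Psi_i$ by Almgren-style discrete approximations with fineness going to zero, and apply Pitts' combinatorial selection adapted to $A^\eps$ to extract vertices $x_i$ at which $\Psi_i(x_i)$ is $(\delta_i, \eta_i)$-almost-minimizing for $A^\eps$ in every pair of disjoint annuli, while $A^\eps(\Psi_i(x_i)) \to L^\eps(\Pi)$. A varifold subsequential limit $V = \lim \vert \bd \Psi_i(x_i)\vert$ is then simultaneously $A^\eps$-stationary and almost-minimizing in annuli; equally important, passing to the underlying sets one extracts an open $\Theta^\eps \subset M$ realizing $A^\eps(\Theta^\eps) = L^\eps(\Pi)$ with $\bd \Theta^\eps = V$ as varifolds.

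The deepest step, and the main obstacle, is the regularity and almost-embeddedness of $V$. I would invoke Zhou--Zhu's regularity theory for almost-minimizing $A^\eps$-stationary varifolds: interior $\eps$-regularity gives a smooth immersion of constant mean curvature $\eps$; the crucial gain over the area functional is that the sign $\eps > 0$ together with one-sided comparison with the minimizer $\Theta^\eps$ forces each sheet to have mean curvature pointing into $\Theta^\eps$, which both excludes the multiplicity-two phenomenon and, via the maximum principle, yields the two-sheeted normal form of Definition \ref{ae}. Call the resulting hypersurface $\Lambda^\eps$. Finally, for the index bound I would run a Marques--Neves deformation argument: if $\Lambda^\eps$ admitted more than $\dim X$ linearly independent $A^\eps$-decreasing directions, one could splice these unstable deformations into $\Psi_i$ near $x_i$ to produce a competitor family in $\Pi$ with $L^\eps$ strictly below $L^\eps(\Pi)$, contradicting infimality. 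The hard part throughout is the regularity/almost-embeddedness step, which genuinely requires the Zhou--Zhu machinery rather than a direct adaptation of Pitts' arguments.
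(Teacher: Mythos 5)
The paper does not give its own proof of this theorem: it is stated as a black-box citation of Zhou (Theorem 1.7 and Theorem 3.1 in \cite{Z}) together with the underlying Zhou--Zhu theory \cite{ZZ}, so there is no internal argument to compare against. That said, your sketch is a fair high-level summary of the architecture of Zhou's actual proof --- pull-tight adapted to $A^\eps$, discretization followed by the Pitts-style combinatorial almost-minimizing selection, Zhou--Zhu interior regularity for $A^\eps$-stationary almost-minimizing boundaries, and a Marques--Neves deformation argument for the index upper bound --- and you correctly identify non-triviality of the width as what keeps the pull-tight and index deformations from moving the family near $Z$.

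One inaccuracy is worth flagging. You attribute the control of multiplicity to the sign $\eps>0$ together with one-sided comparison with $\Theta^\eps$. The actual mechanism is structural: the min-max runs over $\mathcal I_{n+1}(M,\Z_2)$, so the varifolds in play are always boundaries of Caccioppoli sets, and the regular part of the limit is automatically multiplicity one. What the prescribed sign of the mean curvature together with the strong maximum principle buys is the \emph{local} two-sheeted normal form at tangential self-intersections that defines almost-embeddedness in Definition \ref{ae} --- density two is not excluded outright, it is constrained to the touching set. Your sketch would need the full Zhou--Zhu machinery (in particular the replacement construction and regularity theory) to become a proof, which is exactly why the paper invokes the result as a cited theorem rather than proving it.
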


\section{The Stable Case}
\label{sec3}

\subsection{Statement of Results} 

We now formalize the assumptions outlined in the introduction.  Fix a dimension $3 \le n+1\le 7$.  Let $(M^{n+1},g)$ be a closed Riemannian manifold and let $\Sigma^n \subset M^{n+1}$ be a closed, connected, two-sided, minimal hypersurface.  Also assume the following. 

\begin{itemize}
\item[(S-i)] There is a neighborhood $U$ of $\Sigma$ and a smooth function $f$ on $U$ and a number $\alpha > 0$ such that $-\alpha < f < \alpha$ on $U$.\\
\item[(S-ii)]  The level set $\Sigma^\beta := f\inv(\beta)$ is a closed hypersurface diffeomorphic to $\Sigma$ with constant mean curvature $\vert \beta\vert$ for $\vert \beta \vert < \alpha$. Moreover $\Sigma^0 = \Sigma$. \\
\item[(S-iii)] The mean curvature vector of $\Sigma^\beta$ points toward $\Sigma$ for each $\vert \beta\vert < \alpha$. \\
\item[(S-iv)] The gradient $\grad f$ does not vanish anywhere on $U\setminus \Sigma$. 
\end{itemize}
For future reference, we will refer to this collection of assumptions as (S).  Let $\Omega^\eps$ be the region contained between $\Sigma^\eps$ and $\Sigma^{-\eps}$. 

Our main theorems in the stable case are the following. 

\begin{theorem}
\label{main1}
Fix $(M^{n+1},g)$ and $\Sigma$ for which the assumptions (S) hold.  Then there is a smooth, almost-embedded $\eps$-cmc $\Lambda^\eps$ contained in $\Omega^\eps$.  Moreover, there is an open set $\Theta^\eps \subset \Omega^\eps$ with $\Lambda^\eps = \bd \Theta^\eps$, and  
the index of $\Lambda^\eps = \bd \Theta^\eps$ as a critical point of $A^\eps$ is at most 1. 
\end{theorem}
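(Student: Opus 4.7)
The plan is to realize $\Lambda^\eps$ as the output of a one-parameter mountain pass for the functional $A^\eps$ between the two local minima $\emptyset$ and $\Omega^\eps$. Take $X = [0,1]$, $Z = \{0,1\}$, and construct an $\mathbf F$-continuous sweepout $\Phi \colon X \to \mathcal I_{n+1}(M, \Z_2)$ with $\Phi(0) = 0$, $\Phi(1) = \Omega^\eps$, and $\Phi(X) \subset \overline{\Omega^\eps}$. The naive candidate $\Phi(t) = \{\lvert f\rvert < t\eps\}$ is flat-continuous but fails $\mathbf F$-continuity at $t = 0$, since its boundary varifold converges to $2\lvert \Sigma\rvert$ rather than $0$; I would instead first open a small patch of a tubular neighborhood of $\Sigma$ inside $\Omega^\eps$, then extend it along $\Sigma$, and finally thicken up to fill $\Omega^\eps$. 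The resulting $(X,Z)$-homotopy class $\Pi$ has finite width $L^\eps(\Pi)$.

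The core step is to verify
\[
L^\eps(\Pi) > \max\{A^\eps(\emptyset), A^\eps(\Omega^\eps)\} = A^\eps(\Omega^\eps).
\]
Combining the second variation formula for $A^\eps$ with (S-iii) shows that $\Omega^\eps$ is a strictly stable critical point of $A^\eps$. The quantitative minimality theorem of Appendix \ref{QM} promotes this to the uniform estimate: there exist $\delta, \mu > 0$ such that $A^\eps(\Theta) \ge A^\eps(\Omega^\eps) + \mu$ whenever $\Theta \in \mathcal I_{n+1}(M, \Z_2)$ satisfies $0 < \mathbf F(\Theta, \Omega^\eps) < \delta$. Any sequence $\{\Psi_i\} \in \Pi$ has endpoints converging in $\mathbf F$ to $\emptyset$ and $\Omega^\eps$, so the continuous function $t \mapsto \mathbf F(\Psi_i(t), \Omega^\eps)$ attains the intermediate value $\delta/2$ at some $t_i$, giving $A^\eps(\Psi_i(t_i)) \ge A^\eps(\Omega^\eps) + \mu$ for all large $i$. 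This forces $L^\eps(\Pi) \ge A^\eps(\Omega^\eps) + \mu$, and Theorem \ref{mm} then produces a smooth almost-embedded $\eps$-cmc $\Lambda^\eps = \partial \Theta^\eps$ with $A^\eps(\Theta^\eps) = L^\eps(\Pi)$ and index at most $\dim X = 1$.

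It remains to confine $\Theta^\eps$ to $\Omega^\eps$. Since $\Sigma^{\pm\eps}$ have mean curvature $\eps$ with mean curvature vector pointing into $\Omega^\eps$, they are strict barriers for $A^\eps$: a standard retraction argument replacing portions of each $\Psi_i$ lying outside $\overline{\Omega^\eps}$ by their intersection with the leaves of the foliation (which does not increase $A^\eps$) shows that the width may be computed over sequences supported in $\overline{\Omega^\eps}$. Consequently $\Theta^\eps \subset \overline{\Omega^\eps}$. If $\Lambda^\eps$ touched $\Sigma^\eps$ or $\Sigma^{-\eps}$, the maximum principle for almost-embedded cmc hypersurfaces with matching mean curvature and orientation would force a whole component of $\Lambda^\eps$ to coincide with that leaf; tracking the resulting $A^\eps$ contribution against $\Omega^\eps$ and invoking the strict inequality $L^\eps(\Pi) > A^\eps(\Omega^\eps)$ yields a contradiction, so $\Lambda^\eps \subset \Omega^\eps$.

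The hard part, I expect, will be the quantitative minimality input used in the second paragraph. Strict stability immediately gives the required lower bound for smooth graphical perturbations of $\partial \Omega^\eps$, but extending it to a uniform bound over arbitrary rectifiable chains $\Theta$ of low regularity at $\mathbf F$-distance up to $\delta$ from $\Omega^\eps$ is more subtle, and is precisely the content of Appendix \ref{QM}. It must leverage (S-i)--(S-iii) as a genuine, non-infinitesimal barrier — for instance by projecting competitors onto the foliation to reduce to the graphical case while controlling the $A^\eps$ cost of the projection — rather than merely as an infinitesimal stability condition.
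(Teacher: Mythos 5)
Your mountain-pass scheme matches the paper's strategy at the top level: build an $\mathbf F$-continuous sweepout from $\emptyset$ to $\Omega^\eps$, show the width strictly exceeds the endpoint values using strict stability plus a quantitative minimality inequality, and invoke Zhou's min-max theorem for the index bound. Two points need correction.

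First, the quantitative minimality statement as you phrase it --- ``there exist $\delta,\mu>0$ such that $A^\eps(\Theta)\ge A^\eps(\Omega^\eps)+\mu$ whenever $0<\mathbf F(\Theta,\Omega^\eps)<\delta$'' --- is false: smooth graphical perturbations of $\bd\Omega^\eps$ with small norm have $A^\eps$ converging to $A^\eps(\Omega^\eps)$, so no uniform gap $\mu$ exists on the whole punctured ball. The correct output of Appendix~\ref{QM} (Corollary~\ref{qmss}) is a \emph{quadratic} lower bound $A^\eps(\Theta)\ge A^\eps(\Omega^\eps)+C\,\mathcal F(\Theta,\Omega^\eps)^2$. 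Your subsequent application survives the fix, since you evaluate at a fixed intermediate flat distance $\delta/2$ found by the intermediate value theorem and thereby get the uniform gap $\mu = C(\delta/2)^2$ --- this is exactly Corollary~\ref{p1} in the paper. Just state the estimate correctly.

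Second, and more seriously, your confinement argument diverges from the paper and has a real gap. You propose clipping each $\Psi_i$ against the foliation leaves to force supports into $\overline{\Omega^\eps}$ and then invoking a maximum principle. The issue is that ``replacing portions outside $\overline{\Omega^\eps}$ by their intersection with the leaves'' is not an operation that the min-max machinery tolerates: it is not produced by an ambient diffeomorphism or Lipschitz flow, it need not preserve $\mathbf F$-continuity of the family or the homotopy class, and Zhou's regularity and index theory is set up in an open ambient manifold, not in a closed region with boundary. The paper sidesteps all of this by working in the slightly enlarged \emph{open} set $\Omega^*=\Omega^{\eps+\eta}$ and constructing a Lipschitz vector field $Z$, vanishing on $\Omega^\eps$ and supported in $\Omega^*\setminus\Omega^\eps$, whose time-one flow $\varphi_1$ maps every current into a compactly contained $\Omega^{**}\cc\Omega^*$ while satisfying $A^\eps((\varphi_1)_\#\Omega)\le A^\eps(\Omega)$. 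Showing the flow decreases $A^\eps$ requires the genuine computation with $\div_\sigma Z$ using (S-iii) and (S-iv); it is not automatic from the leaves being cmc barriers. Pushing a critical sequence forward by $\varphi_1$ then keeps everything inside an open set where Zhou's theory applies verbatim (Theorem~\ref{mms}). Your closing maximum-principle step is also not quite enough as stated: if a component of $\Lambda^\eps$ coincided with $\Sigma^{\pm\eps}$, it is not immediate how to manufacture a contradiction with $L^\eps(\Pi)>A^\eps(\Omega^\eps)$ without additional bookkeeping; the paper avoids needing this by getting the confinement before applying min-max, not after.
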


\begin{theorem}
\label{main2}
Assume further that $n=2$.  Then the surface $\Lambda^\eps$ from the previous theorem admits a decomposition 
\[
\Lambda^\eps = \Lambda^{\eps}_+ \cup \Lambda^{\eps}_- \cup N
\]
where each $\Lambda^\eps_{\pm}$ is the graph of a function of small norm over $\Sigma$ minus a ball and $N$ is a catenoidal neck. 
\end{theorem}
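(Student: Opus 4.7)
The plan is to analyze the limit $\eps \to 0$ of $\Lambda^\eps$ by blow-up analysis combined with the structure theory for sequences of surfaces of bounded index developed by Chodosh, Ketover, and Maximo \cite{CKM}, adapted to the $\eps$-cmc setting. First I would identify the varifold limit: since $\Lambda^\eps \subset \Omega^\eps$ Hausdorff converges to $\Sigma$, any subsequential limit is supported on $\Sigma$. Matching bounds on the width $L^\eps(\Pi)$ --- an upper bound from a ``two parallel sheets opening via a catenoidal neck'' sweepout (built using the catenoid estimate of \cite{KMN}) and a lower bound $L^\eps(\Pi) \ge A^\eps(\Omega^\eps) = 2\area(\Sigma) + o(1)$ from the mountain pass structure --- force $\area(\Lambda^\eps) \to 2\area(\Sigma)$, hence $|\Lambda^\eps| \to 2|\Sigma|$ as varifolds.

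Second, I would upgrade to smooth convergence away from a finite set. With $n = 2$ and index bounded by one, Schoen--Simon--White type curvature estimates apply in the $\eps$-cmc setting because the cmc defect is a lower order term of size $O(\eps)$, negligible at the scales where $|A_{\Lambda^\eps}|$ concentrates. This produces a finite set $\mathcal{S} \subset \Sigma$ such that $\Lambda^\eps$ converges smoothly with multiplicity two to $\Sigma$ on compact subsets of $\Sigma \setminus \mathcal{S}$. Because $\Lambda^\eps$ is almost-embedded and contained in $\Omega^\eps$, one sheet sits above $\Sigma$ and the other below, giving the graphical decomposition into $\Lambda^\eps_\pm$ away from a neighborhood of $\mathcal{S}$.

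Third, I would describe the neck at each $p \in \mathcal{S}$. Choose scales $r_\eps \to 0$ with $r_\eps \sup_{B_{r_\eps}(p)} |A_{\Lambda^\eps}| \sim 1$ and rescale by $r_\eps^{-1}$; the resulting surfaces have mean curvature $\eps r_\eps \to 0$ and index at most one, hence subsequentially converge to a complete properly embedded minimal surface in $\R^3$ of finite total curvature and index at most one. By the classification of L\'opez--Ros and Fischer--Colbrie, this limit is a catenoid. To show $|\mathcal{S}| = 1$: if $\mathcal{S}$ were empty, then $\Lambda^\eps$ would be a smooth disjoint union of two $\eps$-cmc leaves of the foliation $\{\Sigma^\beta\}$, but by (S-iii) no such configuration bounds an open set with boundary orientation consistent with $\Lambda^\eps = \bd \Theta^\eps$; and if $|\mathcal{S}| \ge 2$, neck-pinching deformations at distinct points of $\mathcal{S}$ produce disjointly supported, independent directions of negative second variation for $A^\eps$, yielding index at least $|\mathcal{S}| \ge 2$ and contradicting Theorem \ref{main1}. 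Hence $|\mathcal{S}| = 1$, and combining the graphical and catenoidal pictures yields $\Lambda^\eps = \Lambda^\eps_+ \cup \Lambda^\eps_- \cup N$.

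The main obstacle is transporting the Schoen--Simon--White curvature estimate to the $\eps$-cmc setting and tracking the scales carefully, so that the $\eps$-cmc defect does not obstruct the catenoid blow-up and one obtains the quantitative $\eps$-regularity needed to control the neck scale $r_\eps$. A secondary but also delicate point is the index lower bound in the step $|\mathcal{S}| \le 1$: one must construct \emph{explicit} neck-pinching competitors supported near each point of $\mathcal{S}$ and verify that they span a subspace of negative directions for the second variation of $A^\eps$, independently of the direction coming from the overall mountain pass.
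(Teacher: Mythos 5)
Your overall strategy matches the paper's: identify the varifold limit $2\Sigma$ via width bounds, upgrade to smooth convergence away from a small set using bounded-index compactness, and blow up to a catenoid using L\'opez--Ros and Fischer-Colbrie--Schoen. However, there are two genuine gaps and one inefficiency.

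First, your argument that $\mathcal S \neq \emptyset$ does not work as stated. If $\mathcal S = \emptyset$ then $\Lambda^\eps$ is two graphical sheets close to $\Sigma^{\pm\eps}$, and that configuration \emph{does} bound an open set with the correct orientation (namely the region between the sheets, analogous to $\Omega^\eps$); so (S-iii) yields no contradiction. The paper's argument is different and does work: first prove $\Lambda^\eps$ is connected by showing a disconnected $\Lambda^\eps$ would have a component graphical over $\Sigma$, which the maximum principle rules out; then observe that a \emph{connected} surface cannot converge smoothly with multiplicity two to a two-sided $\Sigma$, so the convergence is not smooth everywhere, and since the index bound caps the singular set at one point, $|\mathcal S| = 1$. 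This also makes your proposed ``build explicit neck-pinch competitors to force index $\ge |\mathcal S|$'' step unnecessary: Zhou's bounded-index compactness theorem for cmc hypersurfaces already bounds the number of bad points by the index, so $|\mathcal S| \le 1$ comes for free.

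Second, and more seriously, your final sentence ``combining the graphical and catenoidal pictures yields $\Lambda^\eps = \Lambda^\eps_+ \cup \Lambda^\eps_- \cup N$'' skips the hardest step. Knowing that a rescaled subsequence converges locally smoothly to a catenoid only controls the topology of a \emph{fixed compact piece} of the blow-up; it says nothing about the annular region between the catenoid scale $\sim|A_j(p_j)|^{-1}$ and the graphical scale $\sigma$ where you already have two sheets. The paper closes this gap by proving a quantitative curvature bound of the form $|A_j(x)|\dist(x,p_j) \le 1/4$ on $B(p,\sigma)\setminus B(p_j,\eta_j)$ (via a contradiction/point-picking argument and the stability of the resulting limit plane), then invoking the Morse theory lemma of Chodosh--Ketover--Maximo (Lemma 3.1 in \cite{CKM}) to conclude that $\Lambda_j \cap B(p,\sigma)$ and $\Lambda_j \cap B(p_j,\eta_j)$ have the same topology --- and then repeats the same pair of steps one scale down to reduce from $B(0,1)$ to the catenoid scale $B(0, R/|A_j''(0)|)$. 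Without this two-stage curvature estimate and Morse-theoretic topology matching, the claimed decomposition into exactly two graphs and one neck does not follow from the blow-up alone.
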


\subsection{Sweepouts} 
We would like to use a mountain pass type argument to produce an $\eps$-cmc.   
We now introduce the maps that will serve as sweepouts.  Fix a number $0 < \eps < \alpha$. For each $0 < \beta < \alpha$, let $\Omega^\beta = \{-\beta < f < \beta\}$ denote the open set between $\Sigma^{-\beta}$ and $\Sigma^{\beta}$.  Also fix a small number $\eta > 0$ to be specified later and let $\Omega^* = \Omega^{\eps+\eta}$. 

\begin{prop}
\label{map}
There is an $\mathbf F$-continuous map $\Phi\f [0,1] \to \mathcal I_{n+1}(\Omega^*,\Z_2)$ with $\Phi(0) = \emptyset$ and $\Phi(1) = \Omega^\eps$. 
\end{prop}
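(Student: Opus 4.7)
My plan is to realize $\Phi$ as the family of sublevel sets of a carefully chosen auxiliary function $u$ on $\overline{\Omega^*}$. The key is to arrange for $u$ to have an isolated nondegenerate minimum so that the sweepout is $\mathbf F$-continuous at the empty endpoint, and to prescribe $u \equiv 1$ on $\partial \Omega^\eps$ with nonzero gradient there so that the sweepout reaches exactly $\Omega^\eps$ at the other endpoint.

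Concretely, I would construct a smooth function $u \colon \overline{\Omega^*} \to \R$ with the following properties: (i) $u$ attains its minimum value $-1$ at a single point $p_0 \in \Omega^\eps$, which is a nondegenerate critical point; (ii) $u \equiv 1$ on $\partial \Omega^\eps = \Sigma^\eps \cup \Sigma^{-\eps}$ and $\nabla u \ne 0$ there; (iii) $-1 < u < 1$ on $\Omega^\eps \setminus \{p_0\}$; (iv) $u > 1$ on $\Omega^* \setminus \overline{\Omega^\eps}$; and (v) $u$ is Morse. Near $\partial \Omega^\eps$, one can take $u$ to be a suitably normalized multiple of $f^2$, using assumption (S-iv) to guarantee $\nabla u \ne 0$. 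In the interior of $\Omega^\eps$, a partition-of-unity construction produces an initial smooth function matching the boundary specifications and having a nondegenerate minimum at $p_0$; a small generic perturbation supported away from $\{p_0\} \cup \partial \Omega^\eps$ then makes $u$ Morse without disturbing the prescribed data.

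With $u$ in hand, define $\Phi(t) = \{x \in \Omega^* : u(x) < -1 + 2t\}$ for $t \in [0,1]$, so that $\Phi(0) = \emptyset$ and $\Phi(1) = \Omega^\eps$. For $\mathbf F$-continuity at $t = 0$, the Morse lemma at $p_0$ gives $u = -1 + Q(x - p_0)$ for a positive definite quadratic form $Q$, so for small $t$ the set $\Phi(t)$ is an ellipsoidal region of diameter $O(\sqrt{t})$, of volume $O(t^{(n+1)/2})$ and boundary area $O(t^{n/2})$, both of which vanish as $t \to 0^+$. For continuity at $t = 1$, the non-vanishing of $\nabla u$ on the smooth hypersurface $\partial \Omega^\eps$ ensures that $\{u < s\}$ converges to $\Omega^\eps$ in both volume and in boundary varifold as $s \to 1^-$. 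At interior times, flat continuity follows from the coarea formula, and boundary varifold continuity in $\mathbf F$ is immediate at regular values of $u$; across a Morse critical value $c$, the level sets $\{u = c \pm \delta\}$ agree outside a Euclidean ball of radius $O(\sqrt\delta)$ around the critical point and carry mass differing by $O(\delta^{n/2})$ inside it, which also gives $\mathbf F$-continuity.

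The main technical obstacle I foresee is simply assembling $u$ with all the prescribed features simultaneously, but this is an elementary partition-of-unity argument plus a small generic perturbation; once $u$ is in place, the verification of $\mathbf F$-continuity is routine.
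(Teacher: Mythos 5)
Your construction is genuinely different from the paper's proof, and I believe it works, but it deserves a comparison because the two approaches trade off different kinds of technical effort. The paper starts from the natural flat-continuous path $\Psi(t)=\Omega^{t\eps}$, which is \emph{not} $\mathbf F$-continuous at $t=0$ (since $\vert\bd \Omega^{t\eps}\vert\to 2\vert\Sigma\vert$ as $t\to 0^{+}$ while $\bd \Psi(0)=0$), and then repairs this by invoking the standard black boxes of the theory: Zhou--Zhu's discretization lemma to produce a discrete sweepout close to $\Psi$, followed by Zhou's discrete-to-continuous interpolation theorem to produce the required $\mathbf F$-continuous $\Phi$. What this buys is that $\Phi$ comes automatically equipped with the metric comparisons to the natural foliation that the later arguments (e.g.\ the multiplicity bound $W^{\eps}\le\max_{\beta}A^{\eps}(\Omega^{\beta})$) implicitly lean on, and one never has to verify $\mathbf F$-continuity by hand. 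Your approach instead constructs an explicit $\mathbf F$-continuous sweepout out of sublevel sets of a Morse function $u$ that is pinned to $f^{2}/\eps^{2}$ near $\bd\Omega^{\eps}$ and has a unique nondegenerate minimum $p_{0}$ inside; this is more elementary and self-contained, avoiding both cited results entirely. The cost is that you must check $\mathbf F$-continuity directly, which you do: at $t=0$ the boundary mass is $O(t^{n/2})$ so the boundary varifold tends to $0$; at $t=1$ the nonvanishing of $\grad u$ on $\bd\Omega^{\eps}$ gives $C^{1}$-convergence; and across interior Morse saddles, weak-$*$ convergence plus mass continuity of the level sets gives $\mathbf F$-convergence (since the Pitts $\mathbf F$-metric metrizes weak-$*$ convergence on varifolds of bounded mass). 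One small imprecision: the level sets $\{u=c\pm\delta\}$ do not literally ``agree'' outside $B_{O(\sqrt\delta)}$ of a saddle---they are only $O(\delta)$-close in Hausdorff distance and $C^{1}$-close---but this still yields the needed $\mathbf F$-convergence and doesn't affect the argument. One word of caution if you intend to use your $\Phi$ in place of the paper's downstream: the homotopy class is defined relative to $\Phi$, so you would want to observe (or arrange) that your Morse-theoretic $\Phi$ is flat-homotopic rel endpoints to the interpolated one, so that the width $W^{\eps}$ and the later area upper bound are unaffected; this is plausible but is an extra check the paper's route avoids.
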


\begin{proof}
The map $\Psi\f [0,1] \to \mathcal I_{n+1}(\Omega^*,\Z_2)$ given by $\Psi(t) = \Omega^{t\eps}$ is continuous in the flat topology.   By Lemma A.1 in Zhou and Zhu \cite{ZZ}, it is possible to construct a sequence $\{\phi_i\}_i$ of better and better discrete approximations to $\Psi$.  Applying Zhou's discrete to continuous interpolation theorem (Theorem 1.12 in \cite{Z}) produces the required map $\Phi$ from the  sequence $\{\phi_i\}_i$. 
\end{proof}

\begin{defn}
\label{h}
Let $\Phi$ be the map constructed in the previous proposition.  
A sweepout is an $\mathbf F$-continuous map $\Psi\f [0,1]\to \mathcal I_{n+1}(\Omega^*,\Z_2)$ with $\Psi(0) = \emptyset$ and $\Psi(1) = \Omega^\eps$ that is flat homotopic to $\Phi$ relative to $\bd[0,1]$.  More precisely, this last statement means that there is a flat continuous map $H\f [0,1]\times [0,1] \to \mathcal I_{n+1}(\Omega^*,\Z_2)$ such that
\begin{itemize}
\item[(i)] $H(0,t) = \Phi(t)$,
\item[(ii)] $H(1,t) = \Psi(t)$,
\item[(iii)] $H(s,0) = \emptyset$,
\item[(iv)] $H(s,1) = \Omega^\eps$, 
\end{itemize} 
for all $s$ and $t$. 
\end{defn}

\begin{rem}
Let $X = [0,1]$ and $Z = \{0,1\}$.  Note that a sweepout $\Psi$ is essentially an element of the $(X,Z)$-homotopy class of $\Phi$ as defined in Section \ref{not}.  However, we require that $\Psi(0)$ exactly equals $\Phi(0)$ and that $\Psi(1)$ exactly equals $\Phi(1)$.  Moreover, all sets in a sweepout $\Psi$ are required to be contained in the set $\Omega^*$. 
\end{rem}

\begin{defn}
The min-max width $W^\eps$ is defined by 
\[
W^\eps = \inf_{\text{sweepouts } \Psi} \bigg[ \sup_{t\in[0,1]} A^\eps(\Psi(t))\bigg].
\]
\end{defn}

\begin{defn}
A critical sequence is a sequence of sweepouts $\{\Psi_i\}_i$ with the property that 
\[
\lim_{i\to \infty} \bigg[\sup_{t\in[0,1]} A^\eps(\Psi_i(t)) \bigg] = W^\eps. 
\]
\end{defn} 

\begin{defn}
Let $\{\Psi_i\}_i$ be a critical sequence.  The associated critical set $C(\{\Psi_i\}_i)$ is the collection of all varifolds of the form 
\[
V = \lim_{i\to \infty} \vert \bd \Psi_i(t_i) \vert 
\]
with $t_i \in [0,1]$ and $\lim_{i\to \infty} A^\eps(\Psi_i(t_i)) = W^\eps$.  Note that the critical set is always non-empty and compact. 
\end{defn}

\subsection{Non-trivial Width} 

Fix $(M,g)$ and $\Sigma$ satisfying the assumptions (S) and fix a number $0 <\eps < \alpha$.  Recall that the notation $\Omega^\beta$ denotes the open set between $\Sigma^{-\beta}$ and $\Sigma^\beta$.  Also $\eta > 0$ is a fixed small number and $\Omega^* = \Omega^{\eps + \eta}$.  The number $W^\eps$ is the min-max width of the collection of all paths in $\mathcal I_{n+1}(M,\Z_2)$ joining $\emptyset$ to $\Omega^\eps$ while staying inside $\Omega^*$.  

The goal of this section is to show that $W^\eps > \max\{A^\eps(\Omega^\eps),0\}$.  The fact that $W^\eps > A^\eps(\emptyset) = 0$ is a consequence of a suitable isoperimetric inequality. 

\begin{prop}[See Theorem 2.15 in \cite{ZZ}]
There are constants $C$ and $V$ such that 
\[
\area(\bd \Omega) \ge C \vol(\Omega)^{n/(n+1)}
\]
whenever $\Omega \in \mathcal I_{n+1}(M,\Z_2)$ satisfies $\vol(\Omega) < V$. 
\end{prop}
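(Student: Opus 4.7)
The plan is to combine the Euclidean relative isoperimetric inequality with a bounded-overlap cover of $M$. First, by compactness of $M$ I would choose a radius $r_0 > 0$ such that for every $q \in M$ the exponential map is a diffeomorphism onto $B_{2r_0}(q)\subset M$ and the pulled-back metric in the resulting normal coordinates is $2$-bi-Lipschitz to the Euclidean metric. I would then fix a finite cover $\{B_i\}_{i=1}^N$ of $M$ by geodesic balls $B_i = B_{r_0/2}(q_i)$ and let $K$ be a uniform upper bound on the multiplicity of this cover.

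Next, the bi-Lipschitz comparison together with the standard Euclidean relative isoperimetric inequality in a ball yields a constant $c_0 = c_0(n)$ with
\[
\area(\bd \Omega \cap B_i) \ge c_0 \min\bigl(\vol(\Omega \cap B_i),\,\vol(B_i) - \vol(\Omega \cap B_i)\bigr)^{n/(n+1)}
\]
for every $\Omega \in \mathcal I_{n+1}(M,\Z_2)$ and every $i$. One transfers the Euclidean inequality by rescaling and then applying the bi-Lipschitz map; the bi-Lipschitz constant affects $c_0$ but not the exponent.

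Now set $V = \tfrac{1}{2}\min_i \vol(B_i)$. For $\vol(\Omega) < V$, each intersection volume $\vol(\Omega \cap B_i)$ is at most $\vol(B_i)/2$, so the minimum in the relative inequality is attained by $\vol(\Omega \cap B_i)$. Summing over $i$, using the bounded overlap to get $\sum_i \area(\bd \Omega \cap B_i) \le K \area(\bd \Omega)$ and $\sum_i \vol(\Omega \cap B_i) \ge \vol(\Omega)$, and the elementary subadditivity $\sum_i a_i^{n/(n+1)} \ge \bigl(\sum_i a_i\bigr)^{n/(n+1)}$ valid for $a_i \ge 0$ (since the exponent lies in $(0,1)$), gives
\[
\area(\bd \Omega) \ge \frac{c_0}{K}\, \vol(\Omega)^{n/(n+1)},
\]
which is the desired inequality with $C = c_0/K$.

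I expect no serious obstacle here; the argument is essentially a bookkeeping exercise once the cover and the local isoperimetric inequality are in place. The one step requiring a little care is the transfer of the Euclidean relative isoperimetric inequality to each geodesic ball while tracking how the bi-Lipschitz constant enters both the volume and the perimeter. An entirely alternative route would be to quote the Federer--Fleming or Michael--Simon Sobolev inequality on compact manifolds, or to defer directly to the reference \cite{ZZ} already cited in the statement.
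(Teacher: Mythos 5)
Your proof is correct. The paper itself supplies no argument for this proposition: it simply refers the reader to Theorem~2.15 of Zhou--Zhu \cite{ZZ}, so there is no internal proof to compare against. Your covering argument is a clean, self-contained substitute: pull back the Euclidean relative isoperimetric inequality on balls via normal coordinates (using the bi-Lipschitz comparison, which affects the constant but not the exponent because the inequality is scale-invariant), choose $V$ smaller than half the smallest ball volume so the $\min$ is always attained by $\vol(\Omega\cap B_i)$, and then sum using bounded overlap together with the concavity-based subadditivity $\sum_i a_i^{n/(n+1)} \ge \bigl(\sum_i a_i\bigr)^{n/(n+1)}$. All the steps check out: the relative isoperimetric constant in a Euclidean ball is indeed radius-independent by scaling, the bi-Lipschitz transfer distorts volume by at most $L^{n+1}$ and perimeter by at most $L^n$ so only the multiplicative constant changes, and restricting the boundary mass to each ball and summing loses at most the overlap factor $K$. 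This is essentially the standard covering proof of the small-volume isoperimetric inequality on a closed manifold (it is also what one would recover by specializing the Michael--Simon or Federer--Fleming route you mention at the end), and it is an appropriate replacement for the external citation.
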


\begin{corollary}
\label{p}
The width $W^\eps$ is positive. 
\end{corollary}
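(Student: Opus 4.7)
The plan is to leverage the isoperimetric inequality from the preceding proposition together with an intermediate value argument applied to the volume functional. Fix an arbitrary sweepout $\Psi\f [0,1]\to \mathcal I_{n+1}(\Omega^*,\Z_2)$ in the sense of Definition \ref{h}. I will exhibit a single time $t_0 \in [0,1]$ at which $A^\eps(\Psi(t_0))$ is bounded below by a positive constant that depends only on $\eps$, on the isoperimetric constants $C, V$, and on $\vol(\Omega^\eps)$, but \emph{not} on $\Psi$. This will give the required lower bound for $W^\eps$.

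The first step is to note that the real-valued function $t \mapsto \vol(\Psi(t))$ is continuous on $[0,1]$. Indeed, $\mathbf{F}$-continuity implies $\fla$-continuity, and since $M$ is a closed $(n+1)$-manifold there is no room for an $(n+2)$-dimensional filling, so the flat norm on $\mathcal I_{n+1}(M,\Z_2)$ coincides with the mass norm, i.e.\ with the volume. Because $\vol(\Psi(0)) = 0$ and $\vol(\Psi(1)) = \vol(\Omega^\eps) > 0$, the intermediate value theorem supplies, for every target value $V_0 \in (0,\vol(\Omega^\eps))$, some $t_0 \in [0,1]$ with $\vol(\Psi(t_0)) = V_0$.

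The second step is to choose $V_0$ in a way that both activates the isoperimetric inequality and dominates the $-\eps \vol$ term in $A^\eps$. Set
\[
V_0 = \min\left\{\tfrac{V}{2},\ \tfrac{\vol(\Omega^\eps)}{2},\ \left(\tfrac{C}{2\eps}\right)^{n+1}\right\},
\]
which depends only on $\eps$, $C$, $V$, and $\vol(\Omega^\eps)$. Then at the corresponding $t_0$, the isoperimetric inequality yields $\area(\bd \Psi(t_0)) \ge C V_0^{n/(n+1)}$, while the last entry in the minimum ensures that $\eps V_0 \le \tfrac{1}{2} C V_0^{n/(n+1)}$. Therefore
\[
A^\eps(\Psi(t_0)) \;\ge\; C V_0^{n/(n+1)} - \eps V_0 \;\ge\; \tfrac{1}{2} C V_0^{n/(n+1)} \;=:\; \delta \;>\; 0.
\]
Taking the supremum over $t$ and then the infimum over all sweepouts gives $W^\eps \ge \delta > 0$, which completes the proof.

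There is no real obstacle here; the only subtle point is justifying the continuity of $\vol\circ\Psi$, and this follows from the identification of flat and mass norms in top dimension on a closed manifold. Everything else is a one-line application of the isoperimetric inequality from the previous proposition.
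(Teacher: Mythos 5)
Your argument is essentially identical to the paper's: both apply the intermediate value theorem to $t\mapsto \vol(\Psi(t))$ to find a slice of prescribed small volume, then invoke the isoperimetric inequality to show $A^\eps$ is bounded below there by a positive constant independent of the sweepout. You are merely more explicit in justifying the continuity of $\vol\circ\Psi$ and in writing out a concrete choice of the target volume $V_0$ rather than saying ``small enough,'' but the route is the same.
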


\begin{proof}
Choose a small number $0 < v < \min\{V,\vol(\Omega^\eps)\}$. 
Let $\Psi\f [0,1] \to \mathbf{I}_{n+1}(\Omega^*,\Z_2)$ be a sweepout.  By continuity, there must be some $\Omega$ in the image of $\Psi$ with 
$
\vol(\Omega) = v.
$
It follows that 
\begin{align*}
A^\eps(\Omega) &= \area(\bd \Omega) - \eps\vol(\Omega)\\ 
& \ge C\vol(\Omega)^{n/(n+1)} - \eps \vol(\Omega) = \vol(\Omega)^{n/(n+1)} \left(C - \eps \vol(\Omega)^{1/(n+1)}\right).
\end{align*} 
The number on the right hand side is positive provided $v$ is taken sufficiently small.
\end{proof}

It remains to show that $W^\eps > A^\eps(\Omega^\eps)$.  To begin, we first show that $\Omega^\eps$ is a strictly stable critical point of $A^\eps$.  

\begin{prop}
\label{ss}
Assume that $(M,g)$ and $\Sigma$ satisfy the assumptions (S).  
Then $\Omega^\eps$ is a strictly stable critical point of $A^\eps$. 
\end{prop}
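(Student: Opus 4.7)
The plan is to exhibit a positive smooth function $\phi$ on each component of $\partial \Omega^\eps = \Sigma^\eps \cup \Sigma^{-\eps}$ satisfying $L\phi = -1$, where $L = \Delta + |A|^2 + \ric(\nu,\nu)$ is the Jacobi operator, and then invoke the classical conjugation trick. Since $\Omega^\eps$ is a critical point of $A^\eps$, the second variation of $A^\eps$ under a normal variation $u$ on $\partial\Omega^\eps$ has the form
$$Q(u,u) \;=\; \int_{\partial \Omega^\eps}\!\bigl(|\grad u|^2 - (|A|^2 + \ric(\nu,\nu))u^2\bigr)\, dA \;=\; -\int_{\partial\Omega^\eps} u\,Lu\, dA.$$
The components $\Sigma^{\pm\eps}$ are disjoint, so $Q$ decouples as a sum of two independent quadratic forms and strict positivity can be checked on each one separately.

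On $\Sigma^\eps$ I would take $\phi := 1/|\grad f|$, which is smooth and strictly positive by (S-iv). The foliation $\{\Sigma^\beta\}$ is generated by the flow of the vector field $\grad f/|\grad f|^2$, and the normal speed of this flow on each leaf $\Sigma^\beta$ is exactly $\phi$. By (S-ii) and (S-iii), the mean curvature of $\Sigma^\beta$ (computed with respect to the fixed orientation $\grad f/|\grad f|$) varies linearly in $\beta$, so linearizing the mean curvature operator along this flow yields $L\phi = -1$ on $\Sigma^\eps$; the identical computation applies on $\Sigma^{-\eps}$, whose outward normal from $\Omega^\eps$ is $-\grad f/|\grad f|$. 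To close the argument, write an arbitrary nonzero normal variation as $u = \phi v$. Expanding $L(\phi v) = \phi\Delta v + 2\grad\phi\cdot\grad v + v L\phi$, substituting $L\phi = -1$, and integrating the $-\int \phi^2 v\,\Delta v$ term by parts so that the cross terms involving $\grad\phi\cdot\grad v$ cancel, one obtains
$$Q(u,u) \;=\; \int_{\Sigma^\eps} \phi^2|\grad v|^2\, dA + \int_{\Sigma^\eps} \phi v^2\, dA \;>\; 0$$
whenever $v \not\equiv 0$ (equivalently $u \not\equiv 0$, since $\phi > 0$). Summing over the two components of $\partial \Omega^\eps$ gives $Q(u,u) > 0$ for every nonzero normal variation, which is strict stability.

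The only point requiring genuine care is the sign of $L\phi$: with the opposite sign, the conjugation identity would produce $\int \phi^2|\grad v|^2 - \int \phi v^2$, which is not automatically positive. The correct sign is forced by (S-iii), which asserts that the mean curvature vector of each $\Sigma^\beta$ points back toward $\Sigma$, so that flowing outward in the foliation parameter corresponds to an increase in the unsigned mean curvature. Once this sign bookkeeping is carried out, the rest of the proof is routine, and no further technical difficulty is expected.
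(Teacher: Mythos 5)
Your proposal is correct and rests on the same key observation as the paper: the cmc foliation near $\Sigma$ supplies a positive function $\phi$ on each boundary component of $\Omega^\eps$ solving $L\phi = -1$, and the existence of such a function forces the Jacobi operator to have positive spectrum. You take $\phi = 1/|\grad f|$ explicitly (the normal speed of the level-set flow), which is identical to the paper's $\psi = \tfrac{d}{d\gamma}\big|_{\gamma = \eps}\varphi_\gamma$; in fact your invocation of (S-iv) to get strict positivity is a small improvement, since the paper only records $\psi \ge 0$. Where you diverge is the final deduction: you use the conjugation substitution $u = \phi v$ to show $Q(u,u) = \int \phi^2 |\grad v|^2 + \int \phi v^2 > 0$ directly, whereas the paper tests against the first eigenfunction $\zeta > 0$ and uses self-adjointness to get $\lambda \int \psi \zeta = \int \zeta > 0$. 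Both are standard proofs of the lemma ``a positive solution to $L\phi = -1$ implies $\lambda_1(L) > 0$''; yours is a bit more computational but requires strict positivity of $\phi$, the paper's is shorter but needs $\psi$ to be nonnegative and nontrivial. Your sign analysis using (S-iii) to nail down $L\phi = -1$ rather than $L\phi = +1$ is the genuine content and is handled correctly.
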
 

\begin{proof}
Let $N$ be the outward pointing normal vector to $\bd \Omega^\eps$.  The second variation formula for $A^\eps$ says that 
\[
\delta^2 A^\eps\eval_{\Omega^\eps}(uN) = -\int_{\Sigma^\eps} u L_\eps u - \int_{\Sigma^{-\eps}} uL_{-\eps} u ,
\]  
where $L_\beta$ is the Jacobi operator on $\Sigma^\beta$.   Hence to prove the claim it suffices to show that the lowest eigenvalue of $L_\beta$ is positive for $\beta = \pm \eps$. 

We will prove this for $L_\eps$, the argument for $L_{-\eps}$ being essentially identical.   Let $H$ be the mean curvature operator on $\Sigma^\eps$ (computed with respect to $N$).  It is known that $L_\eps$ is the linearization of $H$.  
For $\gamma$ close enough to $\eps$, we can write $\Sigma^\gamma$ as a normal graph of a function $\varphi_\gamma$ over $\Sigma^\eps$.  
Define 
\[
\psi = \frac{d}{d\gamma}\eval_{\gamma = \eps}(\varphi_\gamma)
\]
and note that $\psi \ge 0$.  
Differentiating the equation $H(\varphi_\gamma) = -\gamma$ and evaluating at $\gamma = \eps$ shows that 
$
L_\eps\psi = -1. 
$

The existence of a non-negative solution to this equation implies that the lowest eigenvalue of $L_\eps$ is positive.  Indeed, let $\lambda$ be the lowest eigenvalue of $L_\eps$ and let $\zeta > 0$ be the associated eigenfunction so that $L_\eps\zeta + \lambda \zeta = 0$.  Since 
\[
\int_{\Sigma^\eps} \zeta = -\int_{\Sigma^\eps} \zeta L_\eps \psi = -\int_{\Sigma^\eps} \psi L_\eps \zeta = \lambda \int_{\Sigma^\eps} \psi \zeta
\]
it follows that $\lambda$ must be positive.
\end{proof} 

The desired inequality for the width now follows from the quantitative minimality results in Appendix \ref{QM}.

\begin{prop}
\label{qms}
There are positive constants $\delta$ and $C$ such that 
\[
A^\eps(\Omega) \ge A^\eps(\Omega^\eps) + C \fla(\Omega, \Omega^\eps)^2
\]  
for all $\Omega \in \mathcal I_{n+1}(\Omega^*,\Z_2)$ with $\fla(\Omega,\Omega^\eps) < \delta$. 
\end{prop}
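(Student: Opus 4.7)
The plan is to apply the quantitative minimality theorem of Appendix~\ref{QM}, whose hypotheses are (a) a strictly stable critical point $\Omega_0$ of $A^\eps$ together with (b) a CMC foliation of a neighborhood of $\bd\Omega_0$. In our setting, (a) is Proposition~\ref{ss} and (b) is assumption~(S); note that $\bd\Omega^\eps = \Sigma^\eps \cup \Sigma^{-\eps}$ consists of two foliation leaves whose mean curvature vectors point inward into $\Omega^\eps$, which makes $\Omega^\eps$ a critical point of $A^\eps$ in the first place. The proposition would then follow directly from the appendix.

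To indicate how such a quantitative minimality theorem is established, I would proceed in three steps. First, parametrize sets near $\Omega^\eps$ using the foliation: the flow of $\grad f/|\grad f|^2$ provides coordinates $(p,\beta)\in\Sigma\times(-\eps-\eta,\eps+\eta)$ on $\Omega^*$. Any $\Omega\in\mathcal I_{n+1}(\Omega^*,\Z_2)$ close to $\Omega^\eps$ in flat norm is to be decomposed into a graphical part, where $\bd\Omega$ is a small normal graph by functions $\varphi_{\pm}$ over $\Sigma^{\pm\eps}$, plus a non-graphical remainder. Second, expand $A^\eps$ on the graphical part: the linear term vanishes because $\Sigma^{\pm\eps}$ have constant mean curvature $\eps$, and the quadratic term is bounded below by $c_1(\|\varphi_+\|_{H^1(\Sigma^\eps)}^2+\|\varphi_-\|_{H^1(\Sigma^{-\eps})}^2)$ by the strict stability supplied by Proposition~\ref{ss}.

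Third, translate this $H^1$ lower bound into a lower bound on $\fla(\Omega,\Omega^\eps)^2$. Because the flat norm of a small graphical perturbation is comparable to $\|\varphi_+\|_{L^1}+\|\varphi_-\|_{L^1}$, and because the containment $\Omega\subset\Omega^*$ gives a uniform a priori bound $\|\varphi_{\pm}\|_{L^\infty}\le 2\eta$, we obtain $\fla(\Omega,\Omega^\eps)^2\lesssim\|\varphi_{\pm}\|_{L^2}^2\le\|\varphi_{\pm}\|_{H^1}^2$, which closes the estimate. The main obstacle will be dealing with the non-graphical remainder: one must show that any portion of $\bd\Omega$ failing to be a small normal graph over $\Sigma^{\pm\eps}$ carries strictly more excess perimeter than the $\eps$ times its volume defect. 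The thin-slab geometry of $\Omega^*$, together with the sign condition in assumption~(S-iii) that makes $\grad f/|\grad f|$ a sub-calibration for $A^\eps$, are what force this quantitative comparison to work.
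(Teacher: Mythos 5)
Your first paragraph is the paper's proof: Proposition~\ref{qms} is cited as an immediate consequence of Proposition~\ref{ss} (strict stability of $\Omega^\eps$) together with Corollary~\ref{qmss} in Appendix~\ref{QM}. That reduction is correct. However, you misstate the hypotheses of the appendix result: Theorem~\ref{qm} and Corollary~\ref{qmss} require only that $\bd\Omega$ be a smooth, non-degenerate (here strictly stable) critical point of $A^\eps$. There is no CMC-foliation hypothesis; the foliation in assumption~(S) is what guarantees the strict stability proved in Proposition~\ref{ss}, but once that is established the appendix needs nothing more.

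The remaining two paragraphs, which sketch how one would establish the quantitative minimality theorem itself, diverge from the paper and contain a genuine gap. The paper's appendix follows Inauen--Marchese and White: one adds a penalty term to $A^\eps$ involving $\fla(\cdot,\Omega)$ (and a squared-moment term to make the critical point isolated), minimizes the penalized functional, uses uniform first-variation bounds and monotonicity to promote flat convergence of the minimizers to $\mathbf F$-convergence, and only then invokes White's local graph expansion. Your sketch instead tries to decompose an arbitrary flat-close competitor $\Omega$ into a graphical piece plus a remainder and control the remainder by a calibration using $\grad f/|\grad f|$. The difficulty is that a competitor that is close to $\Omega^\eps$ only in the flat norm need not have boundary anywhere near $\bd\Omega^\eps$ in a strong topology; it can develop bubbles, oscillations, and multiplicity, and the ``graphical plus remainder'' decomposition is not available a priori. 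Moreover, the calibration inequality coming from (S-iii) gives $A^\eps(\Omega) \ge A^\eps(\Omega^\eps)$ for suitable competitors, but it only produces a \emph{linear} lower bound in the volume between the surfaces, and it degenerates to zero as the competitor approaches $\bd\Omega^*$, so it cannot by itself yield the quantitative quadratic gap $C\fla(\Omega,\Omega^\eps)^2$. The penalization/replacement step in the appendix is precisely the mechanism that reduces arbitrary flat-close competitors to smooth graphs before applying the second-variation expansion; your sketch omits it, and without it the argument does not close.
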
 

\begin{proof}
Proposition \ref{ss} says that $\Omega^\eps$ is strictly stable for $A^\eps$. Hence the desired result follows from Corollary \ref{qmss} in Appendix \ref{QM}. 
\end{proof}

\begin{corollary} 
\label{p1}
The width satisfies $W^\eps > A^\eps(\Omega^\eps)$.
\end{corollary}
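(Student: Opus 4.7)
The plan is to combine the quantitative strict stability of $\Omega^\eps$ given by Proposition \ref{qms} with a simple intermediate-value argument along any sweepout. Since every sweepout is $\mathbf{F}$-continuous, and the flat norm is dominated by $\mathbf{F}$, the function $t \mapsto \fla(\Psi(t), \Omega^\eps)$ is continuous on $[0,1]$. Its value at $t=0$ is $\fla(\emptyset, \Omega^\eps) = \vol(\Omega^\eps)$ (the flat norm of a top-dimensional chain equals its mass), and its value at $t=1$ is $0$. Thus any prescribed intermediate flat distance is achieved somewhere along the sweepout.

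First, I would shrink the $\delta$ from Proposition \ref{qms} if necessary so that $\delta < \vol(\Omega^\eps)$; this is harmless since the quadratic lower bound remains valid on the smaller flat neighborhood. Next, fixing an arbitrary sweepout $\Psi$, the intermediate value theorem produces some $t_0 \in (0,1)$ with $\fla(\Psi(t_0), \Omega^\eps) = \delta/2$. Since $\Psi(t_0) \in \mathcal{I}_{n+1}(\Omega^*, \Z_2)$ lies within flat distance $\delta$ of $\Omega^\eps$, Proposition \ref{qms} applies and yields
\[
A^\eps(\Psi(t_0)) \ge A^\eps(\Omega^\eps) + C \left(\frac{\delta}{2}\right)^{\!2} = A^\eps(\Omega^\eps) + \frac{C\delta^2}{4}.
\]
Therefore $\sup_{t\in[0,1]} A^\eps(\Psi(t)) \ge A^\eps(\Omega^\eps) + C\delta^2/4$. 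Taking the infimum over all sweepouts gives the strict inequality $W^\eps \ge A^\eps(\Omega^\eps) + C\delta^2/4 > A^\eps(\Omega^\eps)$.

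The main obstacle is conceptually minimal: essentially all the work has already been done in establishing the quantitative minimality bound of Proposition \ref{qms}, and what remains is only to verify that any sweepout must \emph{cross} the shell where that bound forces a uniform positive gap. The mild subtleties are to check that the flat continuity of $\Psi$ suffices (which follows from $\mathbf{F}$-continuity), and that $\fla(\emptyset, \Omega^\eps)$ really equals $\vol(\Omega^\eps)$ so the intermediate value $\delta/2$ is genuinely attained; both are immediate. Since the resulting lower bound $C\delta^2/4$ is independent of the particular sweepout chosen, the infimum over sweepouts inherits the strict gap, completing the proof.
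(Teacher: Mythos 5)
Your proof is correct and takes essentially the same approach as the paper: both invoke the quantitative strict stability from Proposition \ref{qms}, shrink $\delta$ below $\vol(\Omega^\eps)$, use continuity of the flat distance along the sweepout to find a current at flat distance $\delta/2$ from $\Omega^\eps$, and apply the quadratic lower bound there to get a uniform gap independent of the sweepout.
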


\begin{proof}
Let $\delta$ and $C$ be the constants from Proposition \ref{qms}. 
Without loss we can assume that $\delta < \vol(\Omega^\eps)$. 
Let \[
\Psi\f [0,1] \to \mathcal I_{n+1}(\Omega^*,\Z_2)
\]
be a sweepout.   By continuity there is some $\Omega$ in the image of $\Psi$ with $\fla(\Omega, \Omega^\eps) = \delta/2$.  But then Proposition \ref{qms} implies that
\[
A^\eps(\Omega) \ge A^\eps(\Omega^\eps) + \frac{C\delta^2}{4},
\]
and the corollary follows.
\end{proof} 

\subsection{A Deformation Lemma}

The goal of this section is to prove a deformation lemma that will be used to show that the min-max surface lies in the interior of $\Omega^*$.  The proof closely follows an argument of Marques and Neves \cite{MN}, 
and relies on the existence of a deformation that pushes currents away from $\bd \Omega^*$ while simultaneously decreasing $A^\eps$.   

\begin{prop}
It is possible to find an open set $\Omega^{**}$ with  
\[
\Omega^\eps \cc \Omega^{**} \cc \Omega^*
\]
together with a Lipschitz vector field $Z$ supported on $\Omega^* \setminus \Omega^\eps$ with flow $\varphi_t$ such that the following properties hold.  
\begin{itemize}
\item[(i)] $\supp((\varphi_1)_\# \Omega) \subset \Omega^{**},\;\, \text{for all } \Omega\in \mathcal I_{n+1}(\Omega^*,\Z_2)$
\item[(ii)] $A^\eps((\varphi_1)_\# \Omega) \le A^\eps(\Omega),\; \text{for all } \Omega\in \mathcal I_{n+1}(\Omega^*,\Z_2) $
\end{itemize}  
\end{prop}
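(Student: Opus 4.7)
The plan is to build $Z$ directly from the CMC foliation and then verify both conclusions by a pointwise calculation. First, by shrinking $\eta$ if necessary I may assume the foliation persists on a slightly larger slab $\Omega^{\eps+\eta+\delta}\supset \overline{\Omega^*}$ for some $\delta>0$ with $\eps+\eta+\delta<\alpha$. Set $\nu:=-\operatorname{sign}(f)\,\nabla f/|\nabla f|$ on $\Omega^{\eps+\eta+\delta}\setminus\Sigma$; on each leaf $\Sigma^\beta$ with $\beta>0$, $\nu$ is the unit normal pointing toward $\Sigma$, and the CMC condition reads $\div_{\Sigma^\beta}\nu=-\beta$. Choose a nonnegative Lipschitz profile $\chi\f[0,\alpha)\to[0,\infty)$ which vanishes on $[0,\eps]$ and equals $C(\tau-\eps)^p$ in a right neighborhood of $\eps$, for parameters $C,p$ to be tuned. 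Define
\[
Z(x):=\chi(|f(x)|)\,\nu(x)
\]
for $x\notin\Sigma$, extended by zero on $\Sigma\cup\Omega^\eps$; this is a Lipschitz vector field vanishing on $\Omega^\eps$. Take $\Omega^{**}:=\Omega^{\eps+\eta/2}$.

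Property (i) follows from the ODE obeyed by the flow: along $\varphi_t$ one has $\dot f=\langle\nabla f,Z\rangle=-\operatorname{sign}(f)\chi(|f|)|\nabla f|$, so $|f|$ decreases monotonically toward $\eps$; choosing $C$ large enough (given $p,\eta$) forces $|f|\le\eps+\eta/2$ by time $1$, so $\varphi_1(\Omega^*)\subset\Omega^{**}$.

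Property (ii) is the crux. By the first variation formula,
\[
\frac{d}{dt}A^\eps(\Omega_t)=\int_{\bd\Omega_t}\bigl(\div_{\bd\Omega_t}Z-\eps\langle Z,n_{\Omega_t}\rangle\bigr)\,d\mathcal H^n,
\]
so it suffices to verify the pointwise bound $\div_S Z-\eps\langle Z,n\rangle\le 0$ at every $x\in\supp(Z)$ and every tangent hyperplane $S$ with unit normal $n$. At $x\in\Sigma^\beta$ decompose $n=\cos\theta\,\nu+\sin\theta\,\tau^*$ for a unit tangent $\tau^*\in T_x\Sigma^\beta$. Using $\div_{\Sigma^\beta}\nu=-\beta$ and the identity $\div_S Z=\div Z-\langle\nabla_n Z,n\rangle$, a direct expansion shows that the bound, required for both choices of orientation of $n$, is equivalent to
\[
(\beta-\eps|\cos\theta|)+\sin^2\theta\bigl[(\chi'/\chi)|\nabla f|+A^\beta(\tau^*,\tau^*)\bigr]+\sin\theta\cos\theta\,\langle\nabla_\nu\nu,\tau^*\rangle\ge 0,
\]
where $A^\beta$ is the shape operator of $\Sigma^\beta$ with respect to $\nu$. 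Completing the square in the $\sin\theta$-dependence reduces this further to the scalar inequality
\[
4\bigl[(\chi'/\chi)|\nabla f|+A^\beta(\tau^*,\tau^*)\bigr](\beta-\eps)\ge \langle\nabla_\nu\nu,\tau^*\rangle^2.
\]
For $\chi(\tau)=C(\tau-\eps)^p$ one has $(\chi'/\chi)(\beta-\eps)=p$, so the left side equals $4p|\nabla f|+O(\eta)$ uniformly in $\beta\in(\eps,\eps+\eta]$. Choosing $p$ large to dominate the bounded geometric quantities $|A^\beta|$ and $|\nabla_\nu\nu|^2$ on $\overline{\Omega^*}\setminus\Omega^\eps$, then $\eta$ correspondingly small, and finally $C$ large so (i) holds, completes the construction.

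The main obstacle will be verifying the pointwise inequality uniformly over all tangent planes $S$: a priori the shape operator of the leaves and the ``torsion'' $\nabla_\nu\nu$ of the foliation obstruct monotonicity. What makes the argument work is that the derivative ratio $\chi'/\chi$ can be made arbitrarily large near $\Sigma^{\pm\eps}$ by raising the exponent $p$, providing a tunable reservoir of positivity that absorbs the bounded geometric errors coming from $A^\beta$ and $\nabla_\nu\nu$.
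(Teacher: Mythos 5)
Your proposal is correct, and the vector field is genuinely different from the one in the paper. Both arguments reduce the claim to the pointwise inequality $\div_\sigma Z - \eps\langle Z, n\rangle \le 0$ over all $n$-planes $\sigma$ at points of $\Omega^*\setminus\Omega^\eps$ and then invoke the first variation formula. The paper takes $Z = -(f-\eps)X$ with $X = \grad f/\vert\grad f\vert^2$; the crucial feature of the non-unit field $X$ (characterized by $X(f)=1$, so its flow advances the foliation parameter at unit rate) is the exact antisymmetry $\langle\grad_{e_i}Z, N\rangle = -\langle e_i, \grad_N Z\rangle$, which makes the $\cos\theta\sin\theta$ cross term vanish identically and lets the inequality drop out in one line with no parameters to tune. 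Your $Z = \chi(\vert f\vert)\nu$, built from the unit normal $\nu$, enjoys no such cancellation, so the term $\langle\grad_\nu\nu,\tau^*\rangle\cos\theta\sin\theta$ survives and must be dominated; your discriminant condition $4\left[(\chi'/\chi)\vert\grad f\vert + A^\beta(\tau^*,\tau^*)\right](\beta-\eps) \ge \langle\grad_\nu\nu,\tau^*\rangle^2$ is the correct reduction (after the harmless bound $\vert\cos\theta\vert\le 1$), and the identity $(\chi'/\chi)(\beta-\eps)=p$ for the power profile is exactly what makes the left side tunable. Both approaches rely on (S-iv) only through a uniform lower bound on $\vert\grad f\vert$ near $\Sigma^{\pm\eps}$. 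The paper's computation is cleaner and structural; yours compensates with a free exponent $p$ that brute-forces the estimate against the bounded geometric terms, which would transfer more readily to a setting where the foliation-adapted antisymmetry is not available.
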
 

\begin{proof}
Recall that the cmc foliation near $\Sigma$ is given by the level sets of a function $f$ and that $\grad f \neq 0$ on a neighborhood $W$ of $\Sigma^{-\eps}\cup \Sigma^{\eps}$.   By taking $\eta$ small enough, we can assume that $\Omega^*\setminus \Omega^\eps \subset W$.  Define a vector field $X = \grad f / \vert \grad f\vert^2$ on $W$.  Then define 
\[
Z = \begin{cases}
-(f-\eps)X, &\text{on } \Omega^*\setminus \Omega^\eps\\
0, &\text{otherwise}
\end{cases}
\]
and note that $Z$ is a Lipschitz vector field on $\Omega^*$. 

Fix some $\Omega \in \mathcal I_{n+1}(\Omega^*,\Z_2)$ and let $\nu$ be the outward pointing normal vector to $\bd \Omega$.  According to the first variation formula, 
\[
\delta A^\eps\eval_{\Omega}(Z) = \int_{\bd \Omega} \div_{\sigma}Z  - \eps \int_{\bd \Omega} \la Z, \nu\ra  \, d\mathcal H^n.
\]  
To understand the right hand side, we need to compute $\div_\sigma Z$.  

Let $\phi_t$ denote the flow of $X$ and let $x$ denote a point in $\Sigma^\eps$.  Choose a point $y = \psi(x,t)$ and let $\sigma\subset T_y M$ be an $n$-plane.  Let $x_i$ be coordinates on a neighborhood of $x$ in $\Sigma^\eps$.  Then the map $\psi(x,t) = \phi_t(x)$ gives coordinates on a neighborhood of $y$.  Define $e_i = \bd \psi/\bd x_i$ and note that $\bd \psi/\bd t= X$.  Let $N = \grad f /\vert \grad f \vert$ be the unit normal vector to the surfaces $\Sigma^\beta$ and let $A$ denote the second fundament form of the surfaces $\Sigma^\beta$.  As in Marques and Neves \cite{MN}, we compute 
\begin{gather*}
\la \del_{e_i} Z, e_j\ra = -\la Z, A(e_i,e_j)\ra, \\
\la \del_{N}Z, N\ra = \la \del_N (-(f-\eps)X), N\ra = -1 - (f-\eps)\la \del_N X,N\ra. 
\end{gather*}
Also we have 
\begin{gather*}
 \la \del_{e_i}Z, N\ra  = 
(f-\eps) \left\la \frac{\bd \psi}{\bd x_i} , \del_{\frac{\bd \psi}{\bd t}} N\right\ra = \frac{f-\eps}{\vert \grad f\vert} \la e_i, \del_N N\ra,\\
\la e_i, -\del_N Z\ra = \left\la e_i, N\left(\frac{f-\eps}{\vert \grad f\vert}\right)N + \frac{f-\eps}{\vert \grad f\vert} \del_N N\right\ra = \frac{f-\eps}{\vert \grad f\vert }\la e_i, \del_N N\ra,
\end{gather*}
and so 
\[
\la \del_{e_i} Z, N\ra = -\la e_i, \del_N Z\ra.
\] 
Using this one can compute $\div_\sigma Z$ as follows.

Let $v_1,\hdots,v_n$ be an orthonormal basis for $\sigma$.  We can arrange that $v_1,\hdots,v_{n-1}$ are tangent to $\Sigma^{\eps+t}$ and that $v_n = (\cos \theta)u + (\sin \theta)N$ for some unit vector $u$ which is tangent to $\Sigma^{\eps+t}$ and orthogonal to $v_1,\hdots,v_{n-1}$.  Let $H$ be the mean curvature vector for $\Sigma^{\eps+t}$.  Then from the above computations one finds 
\begin{align*}
\div_\sigma Z &= \left(\la \del_u Z, u\ra +\sum_{i=1}^{n-1} \la \del_{v_i} Z, v_i\ra\right) + \la \del_{v_n} Z, v_n\ra - \la \del_u Z, u\ra \\
&= -\la Z, H\ra + (\cos^2 \theta - 1)\la \del_u Z,u\ra + \sin^2\theta \la \del_N Z, N\ra \phantom{\bigg(}\\
&= -\frac{\eps(f-\eps)}{\vert \grad f\vert} - \sin^2\theta \bigg(1 + (f-\eps) \la \del_N X, N\ra + (f-\eps) \la X , A(u,u)\ra \bigg). 
\end{align*} 
Therefore, provided $\eta$ is small enough, it follows that 
\begin{align*}
\div_\sigma Z - \eps \la Z,\nu\ra \le -\frac{\eps(f-\eps)}{\vert \grad f\vert} + \eps\vert Z\vert = 0. 
\end{align*}
Hence following the flow of $Z$ decreases $A^\eps$.  
\end{proof}

\begin{corollary}
\label{gpt}
There exists an open set $\Omega^{**}$ with $\Omega^\eps \cc \Omega^{**} \cc \Omega^*$ and a critical sequence $\{\Psi_i\}_i$ such that 
\[
\supp(\Psi_i(x)) \subset \Omega^{**}
\]
for all $i$ and all $x\in [0,1]$. 
\end{corollary}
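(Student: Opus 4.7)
The plan is to take an arbitrary critical sequence for $W^\eps$ and modify it by push-forward under the flow $\varphi_1$ of the vector field $Z$ produced in the previous proposition. Concretely, let $\{\widetilde \Psi_i\}_i$ be any critical sequence (which exists by definition of $W^\eps$), and set
\[
\Psi_i(t) = (\varphi_1)_\# \widetilde \Psi_i(t), \qquad t\in [0,1].
\]
Property (i) of the proposition immediately yields $\supp(\Psi_i(t)) \subset \Omega^{**}$ for every $i$ and every $t$, which is the conclusion we need. The remaining task is to verify that $\{\Psi_i\}_i$ is genuinely a sequence of sweepouts and that it is still a critical sequence for $W^\eps$.

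For the critical-sequence property, observe that by property (ii) we have $A^\eps(\Psi_i(t)) \le A^\eps(\widetilde \Psi_i(t))$ for every $t$. Taking suprema in $t$ and then the limit in $i$ gives
\[
\limsup_{i\to\infty} \sup_{t\in[0,1]} A^\eps(\Psi_i(t)) \le \lim_{i\to\infty} \sup_{t\in[0,1]} A^\eps(\widetilde \Psi_i(t)) = W^\eps,
\]
so once we know each $\Psi_i$ is a sweepout, the infimum definition of $W^\eps$ forces equality in the limit and $\{\Psi_i\}_i$ is critical.

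To check that each $\Psi_i$ is a sweepout, I would verify the endpoint and homotopy conditions in Definition \ref{h}. Since $Z$ vanishes on $\Omega^\eps$ and on a neighborhood of $\bd \Omega^\eps$ from the inside (it is supported on $\Omega^*\setminus \Omega^\eps$), the diffeomorphism $\varphi_1$ fixes $\Omega^\eps$, so $\Psi_i(1) = (\varphi_1)_\# \Omega^\eps = \Omega^\eps$; clearly $\Psi_i(0) = \emptyset$. The $\mathbf F$-continuity of $\Psi_i$ in $t$ follows from that of $\widetilde \Psi_i$ because push-forward by a fixed Lipschitz diffeomorphism is continuous in flat norm and in mass, hence in $\mathbf F$. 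For the flat homotopy to $\Phi$ relative to $\{0,1\}$, let $H^{(0)}$ be the flat continuous homotopy from $\widetilde \Psi_i$ to $\Phi$ (built into the fact that $\widetilde \Psi_i$ is a sweepout), and concatenate with the homotopy
\[
H^{(1)}(s,t) = (\varphi_s)_\# \widetilde \Psi_i(t), \qquad (s,t)\in [0,1]\times [0,1],
\]
from $\Psi_i$ to $\widetilde \Psi_i$. Since $\varphi_s$ fixes $\emptyset$ and $\Omega^\eps$, we have $H^{(1)}(s,0)=\emptyset$ and $H^{(1)}(s,1) = \Omega^\eps$ for all $s$, so the concatenation is a valid relative flat homotopy.

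The main subtlety, and really the only technical point, is confirming that $H^{(1)}$ is flat continuous as a map of two variables. This is a standard consequence of the joint continuity of the flow $(s,x) \mapsto \varphi_s(x)$ in $C^1$ combined with the $\mathbf F$-continuity (hence flat continuity) of $\widetilde \Psi_i$ in $t$: for $(s,t)$ close to $(s_0,t_0)$ one estimates $\fla(\varphi_s{}_\# \widetilde\Psi_i(t),\varphi_{s_0}{}_\# \widetilde\Psi_i(t_0))$ by splitting through the intermediate term $\varphi_{s_0}{}_\# \widetilde\Psi_i(t)$ and using the Lipschitz bound $\|\varphi_s - \varphi_{s_0}\|_{C^0}\to 0$ on the fixed compact set $\overline{\Omega^*}$. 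Once this continuity is in hand, $\{\Psi_i\}_i$ is the desired critical sequence, completing the proof.
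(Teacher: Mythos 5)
Your proposal is correct and matches the paper's proof exactly: the paper also takes an arbitrary critical sequence $\{\Phi_i\}_i$, defines $\Psi_i(x) = (\varphi_1)_\# \Phi_i(x)$, and concludes ``By the previous proposition, $\{\Psi_i\}_i$ is as required.'' You have simply spelled out the details (endpoints, $\mathbf F$-continuity, relative homotopy via $(\varphi_s)_\#$, and the $\limsup/\liminf$ squeeze) that the paper leaves implicit.
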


\begin{proof}
Let $\{\Phi_i\}_i$ be a criticial sequence.  Let $\varphi_t$ denote the flow of $Z$.  Define $\Psi_i(x) = (\varphi_1)_\# \Phi_i(x)$ for $x\in [0,1]$.  By the previous proposition, $\{\Psi_i\}_i$ is as required. 
\end{proof}

\subsection{Constructing the Min-Max Surfaces}

We can now perform a min-max argument to construct the  doublings.  The following min-max theorem essentially follows from Theorem \ref{mm}.  The theorem is not an immediate consequence of Theorem \ref{mm} because we require that the surfaces in a sweepout are contained in $\Omega^*$.  However, it is straightforward to modify the proof of Theorem \ref{mm} to handle our situation.

\begin{theorem}
\label{mms}
Assume that $W^\eps > \max\{0, A^\eps(\Omega^\eps)\}$.  Then for any critical sequence $\{\Psi_i\}_i$ there is a varifold $V\in C(\{\Psi_i\}_i)$ that is induced by a smooth, almost-embedded $\eps$-cmc hypersurface $\Lambda^\eps$.  There is an open set $\Theta^\eps\subset \Omega^*$ such that $\bd \Theta^\eps = \Lambda^\eps$ and $A^\eps(\Theta^\eps) = W^\eps$.  Moreover, there is a bound $\ind(\Lambda^\eps)\le 1$. 
\end{theorem}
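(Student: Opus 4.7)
The plan is to reduce Theorem \ref{mms} to Zhou's min-max theorem (Theorem \ref{mm}) applied with parameter space $(X,Z) = ([0,1],\{0,1\})$. The only real difference between our setup and Zhou's is the confinement requirement that sweepouts take values in $\mathcal I_{n+1}(\Omega^*,\Z_2)$ rather than $\mathcal I_{n+1}(M,\Z_2)$, so the bulk of the work is to verify that the pull-tight and regularity arguments in Zhou's proof can be carried out without leaving $\Omega^*$.

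First, given any critical sequence $\{\Phi_i\}_i$, I would invoke Corollary \ref{gpt} to replace it by a critical sequence $\{\Psi_i\}_i = \{(\varphi_1)_\# \Phi_i\}_i$ whose image lies in $\mathcal I_{n+1}(\Omega^{**},\Z_2)$ for some $\Omega^{**}$ with $\Omega^\eps \cc \Omega^{**} \cc \Omega^*$. Because $\varphi_1$ is a smooth diffeomorphism of $M$ fixing $\Omega^\eps$ and supported in $\Omega^* \setminus \Omega^\eps$, the resulting $\Psi_i$ still satisfy $\Psi_i(0) = \emptyset$ and $\Psi_i(1) = \Omega^\eps$, and $\{\Psi_i\}_i$ is flat homotopic to $\{\Phi_i\}_i$ via the flow $\varphi_t$; hence it remains a critical sequence of sweepouts in the sense of Definition \ref{h}. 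The critical set $C(\{\Psi_i\}_i)$ is then nonempty and every varifold in it has support in $\overline{\Omega^{**}} \subset \Omega^*$.

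Next, I would run Zhou's machinery on the $(X,Z)$-homotopy class of $\Psi$ regarded as a class in $\mathcal I_{n+1}(M,\Z_2)$. The non-triviality hypothesis $W^\eps > \max\{0, A^\eps(\Omega^\eps)\}$ is exactly the condition $L^\eps(\Pi) > \max_{z\in Z} A^\eps(\Phi(z))$ required by Theorem \ref{mm}. The pull-tight procedure in Zhou \cite{Z} is generated by vector fields supported in a small neighborhood of the critical support, which by the previous paragraph can be chosen to sit inside $\Omega^*$; in particular, pull-tight can be performed through maps that continue to take values in $\mathcal I_{n+1}(\Omega^*,\Z_2)$, so the critical sequence stays admissible at each stage. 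The regularity conclusion of Theorem \ref{mm} is entirely local near the support of the limit varifold, which is already contained in $\Omega^*$, so no adjustment is required there. This yields an almost-embedded $\eps$-cmc $\Lambda^\eps$ and an open set $\Theta^\eps \subset \Omega^*$ with $\bd \Theta^\eps = \Lambda^\eps$ and $A^\eps(\Theta^\eps) = W^\eps$, while the index bound $\ind(\Lambda^\eps) \le \dim X = 1$ is inherited directly from Zhou's theorem.

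The main obstacle is verifying the compatibility between the confinement to $\Omega^*$ and Zhou's combinatorial pull-tight scheme. The idea is to exploit Corollary \ref{gpt} as a one-time global deformation that buys a positive distance from $\bd \Omega^*$, and then to note that every subsequent perturbation Zhou uses is supported in a shrinking neighborhood of the varifold critical set. Once these neighborhoods are contained in $\Omega^*$, all discrete-to-continuous interpolation, pull-tight, and regularity steps in \cite{Z} go through verbatim, and the theorem follows.
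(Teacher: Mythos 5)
Your proposal tracks the paper's proof closely — apply Corollary \ref{gpt} to push a critical sequence into $\Omega^{**}\cc\Omega^*$, then argue that the remaining steps of Zhou's scheme can be carried out inside $\Omega^*$ — so the approach is essentially the same. Two remarks, though. (1) You never address the mismatch between Definition \ref{h}, which demands exact endpoint equality $\Psi(0)=\emptyset$ and $\Psi(1)=\Omega^\eps$, and Zhou's notion of $(X,Z)$-homotopy class, which only requires $\mathbf F(\Psi_i(z),\Phi(z))\to 0$ for $z\in Z$. A priori the width $W^\eps$ is an infimum over a strictly smaller class than Zhou's $L^\eps(\Pi)$, so "running Zhou's machinery on the $(X,Z)$-homotopy class" does not automatically say anything about $W^\eps$. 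The paper closes this by noting that, because $X=[0,1]$ is one-dimensional, Zhou's interpolation results let one insist on exact endpoint equality without changing the width; some such statement is needed in your argument. (2) Your mechanism for confinement — that Zhou's pull-tight and subsequent perturbations are "supported in a shrinking neighborhood of the varifold critical set" — is not accurate: the pull-tight vector fields and the discrete-to-continuous interpolation are global operations, not local modifications near the critical set. What is actually true, and what the paper uses, is the weaker but robust statement that each of the pull-tight, combinatorial, and index-estimate deformations only expands supports by a controlled small amount: if $\supp(\Psi(t))\subset W$ for all $t$ then the deformed family has $\supp(\Psi'(t))\subset W'$ for a slightly larger $W'\supset W$. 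Together with the definite buffer produced by Corollary \ref{gpt}, this is what keeps everything inside $\Omega^*$; you should replace the "supported near the critical set" claim with this support-expansion bound.
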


\begin{proof}
We outline the necessary changes to the proof of Theorems 1.7 and 3.1 in \cite{Z}.  Let $X = [0,1]$ and $Z = \{0,1\}$.  Let $\Phi$ be the map from Proposition \ref{map}.  Zhou defines the $(X,Z)$-homotopy class of $\Phi$ to consist of all sequences $\{\Psi_i\}_i$ such that each $\Psi_i$ is flatly homotopic to $\Phi$ and 
\[
\lim_{i\to \infty} \max\{\mathbf F(\Psi_i(0),\emptyset), \mathbf F(\Psi_i(1),\Omega^\eps)\} = 0. 
\]
However, because the domain $X$ is one dimensional, the interpolation results of Zhou show that nothing changes if we instead insist that $\Psi_i(0) = \emptyset$ and $\Psi_i(1) = \Omega^\eps$ for all $i$.  This leads to the notion of homotopy in Definition \ref{h}. 

Now let $\Psi$ be a sweepout.  Assume that $\Psi'$ is obtained from $\Psi$ by either the pulltight procedure, the combinatorial argument, or the deformations in the index estimates.  Note that we can arrange so that the following property is true: if $W$ is an open set and $\supp(\Psi(t)) \subset W$ for all $t\in [0,1]$ then $\supp(\Psi'(t)) \subset W'$ for all $t\in [0,1]$ where $W'$ is a slightly larger open set containing $W$.  Therefore, by Corollary \ref{gpt}, we can perform all the arguments of Zhou on a critical sequence $\{\Psi_i\}_i$ while always staying inside $\Omega^*$.
\end{proof}

We can now prove the first main theorem.  

\begin{proof} (Theorem \ref{main1}) 
Corollary \ref{p} and Corollary \ref{p1} show that 
\[
W^\eps > \max\{0,A^\eps(\Omega^\eps)\}.
\]
Therefore Theorem \ref{mms} applies to produce $\Lambda^\eps$ and $\Theta^\eps$ satisfying the conclusion of Theorem \ref{main1}. 
\end{proof}

\subsection{Topology of the Min-Max Doubling} 

The goal of this section is to show that the min-max surfaces constructed above consist of two parallel copies of $\Sigma$ joined by a small catenoidal neck.  
For this section only, we require that $n+1=3$.  

Choose a sequence $\eps_j \to 0$.  Let $\Lambda_j = \Lambda^{\eps_j}$ be the $\eps_j$-cmc given by Theorem \ref{main1}.  
Note that $\Lambda_j$ converges to $\Sigma$ in the Hausdorff distance.  Hence by the compactness theorem for cmcs with bounded area and index (Zhou \cite{Z}),
 there is a point $p\in \Sigma$ such that (up to a subsequence) $\Lambda_j$ converges locally smoothly to $\Sigma$ away from $p$. 

\begin{prop}
The convergence $\Lambda_j \to \Sigma$ occurs with multiplicity two.  
\end{prop}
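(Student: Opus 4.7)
The plan is to show $\area(\Lambda_j) \to 2\area(\Sigma)$ and then compare with a varifold limit.  The key estimate is $W^{\eps_j} \to 2\area(\Sigma)$.  For the lower bound, Corollary \ref{p1} gives $W^{\eps_j} > A^{\eps_j}(\Omega^{\eps_j})$, and since $\Sigma^{\pm\eps_j} \to \Sigma$ smoothly and $\vol(\Omega^{\eps_j}) = O(\eps_j)$,
\[
A^{\eps_j}(\Omega^{\eps_j}) = \area(\Sigma^{\eps_j}) + \area(\Sigma^{-\eps_j}) - \eps_j \vol(\Omega^{\eps_j}) \longrightarrow 2\area(\Sigma).
\]
For the upper bound, I would exhibit a specific sweepout of $\Omega^{\eps_j}$ from one side: fix $q\in \Sigma$ and take $\Psi(t) = \Omega^{\eps_j} \setminus \overline{B_{(1-t)R}(q)}$ for large $R$; after Zhou's discrete-to-continuous interpolation this becomes a valid competitor in the homotopy class of $\Phi$.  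The boundary $\bd\Psi(t)$ consists of a portion of $\bd\Omega^{\eps_j}$ of area at most $\area(\bd\Omega^{\eps_j}) = 2\area(\Sigma) + O(\eps_j)$ together with the slice $\bd B_{(1-t)R}(q)\cap \Omega^{\eps_j}$ of area $O(\eps_j)$, since $\Omega^{\eps_j}$ is a tubular neighborhood of $\Sigma$ of thickness $O(\eps_j)$.  Hence $W^{\eps_j}\le \sup_t A^{\eps_j}(\Psi(t)) \le 2\area(\Sigma) + O(\eps_j)$.

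Given this, since $\bd\Theta^{\eps_j} = \Lambda_j$, $A^{\eps_j}(\Theta^{\eps_j}) = W^{\eps_j}$, and $\vol(\Theta^{\eps_j})$ is uniformly bounded, we obtain
\[
\area(\Lambda_j) = W^{\eps_j} + \eps_j\vol(\Theta^{\eps_j}) \longrightarrow 2\area(\Sigma).
\]
To finish, pass to a subsequence so that $|\Lambda_j|\to V$ as varifolds.  Since the first variation of $\Lambda_j$ is bounded by $\eps_j\to 0$ and the masses are uniformly bounded, Allard's compactness gives that $V$ is stationary and integral.  The Hausdorff convergence $\Lambda_j \to \Sigma$ forces $\supp V\subset \Sigma$, and smooth convergence away from $p$ with integer multiplicity $m$ identifies $V$ with $m|\Sigma|$ on $M\setminus \{p\}$.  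Because any integral $2$-varifold is absolutely continuous with respect to $\mathcal H^2$ on its rectifiable support and $\{p\}$ has $\mathcal H^2$-measure zero, we conclude $V = m|\Sigma|$.  Mass is continuous under varifold convergence on compact $M$, so
\[
m\area(\Sigma) = \mass V = \lim_{j\to \infty}\area(\Lambda_j) = 2\area(\Sigma),
\]
forcing $m = 2$.

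The main obstacle is the upper bound on $W^{\eps_j}$: the heuristic sweepout above must be converted into an honest $\mathbf F$-continuous sweepout in the homotopy class of $\Phi$ from Proposition \ref{map}, which requires carefully combining a discrete approximation with Zhou's interpolation theorem while monitoring the cross-sectional area.
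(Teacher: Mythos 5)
Your lower bound matches the paper exactly: $W^{\eps_j} \ge A^{\eps_j}(\Omega^{\eps_j}) \to 2\area(\Sigma)$.  Your final step (passing from $\area(\Lambda_j)\to 2\area(\Sigma)$ to multiplicity two via Allard compactness and continuity of mass) is also fine, and just fleshes out what the paper leaves implicit.

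The upper bound is where you take a detour, and it is where the gap sits.  You construct a brand-new sweepout by shrinking balls, $\Psi(t) = \Omega^{\eps_j}\setminus \overline{B_{(1-t)R}(q)}$, and then note at the end that the ``main obstacle'' is converting this into an $\mathbf{F}$-continuous competitor in the homotopy class of $\Phi$.  That obstacle is real and not merely a matter of routine interpolation: you would need to verify membership in the $(X,Z)$-homotopy class of Definition \ref{h}, and you would need to control the mass of the interpolated discrete family relative to $\Psi$.  But none of this is necessary, because the paper already has a competitor in hand: the map $\Phi$ from Proposition \ref{map} \emph{is} the interpolation of $t\mapsto \Omega^{t\eps}$, hence it is a sweepout by fiat, and Zhou's interpolation theorem controls its mass in terms of the original family.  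Therefore
\[
W^\eps \le \sup_t A^\eps(\Phi(t)) \approx \max_{\beta\in[0,\eps]} A^\eps(\Omega^\beta) \le \max_{\beta\in[0,\eps]} \area(\Sigma^\beta\cup\Sigma^{-\beta}) \longrightarrow 2\area(\Sigma),
\]
with no new sweepout required.  This is the step you should have reached for; the ball-shrinking sweepout both introduces the unresolved homotopy question and is less natural given that the foliation family was built precisely to serve this purpose.
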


\begin{proof}
First we show that the multiplicity is at most two.  To prove this, it suffices to show that 
\[
\limsup_{\eps \to 0} W^\eps \le 2\area(\Sigma). 
\]
Fix some $\eps >0$.  Since the map $\Phi:[0,1]\to \mathcal I_{n+1}(M,\Z_2)$ given by $\Phi(t) = \Omega^{t\eps}$ can be interpolated to a sweepout, it follows that 
\[
W^\eps \le \max_{\beta \in [0,\eps]} A^\eps(\Omega^\beta) \le \max_{\beta\in [0,\eps]} \area(\bd \Omega^\beta).
\]
The quantity on the right hand side converges to $2\area(\Sigma)$ as $\eps \to 0$. 

It remains to show that the multiplicity is at least 2.  To prove this, it suffices to show that 
\[
\liminf_{\eps \to 0} W^\eps \ge 2\area(\Sigma).
\]
To see this, recall that 
\[
W^\eps \ge A^\eps(\Omega^\eps) = \area(\bd \Omega^\eps) - \eps \vol(\Omega^\eps). 
\]
Again the quantity on the right hand side converges to $2\area(\Sigma)$ as $\eps \to 0$. 
\end{proof}

\begin{prop}
The surface $\Lambda_j$ is connected.
\end{prop}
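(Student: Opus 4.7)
The plan is to derive a contradiction from the assumption that $\Lambda_j$ has more than one connected component, by identifying each component with a leaf of the given cmc foliation and then invoking the strict inequality $W^{\eps_j} > A^{\eps_j}(\Omega^{\eps_j})$ from Corollary \ref{p1}.

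First I would show that if $\Lambda_j$ is disconnected then it decomposes as $\Lambda_j = \Lambda_j^+ \sqcup \Lambda_j^-$ with each component converging to $\Sigma$ with varifold multiplicity one. Since $|\Lambda_j| \to 2|\Sigma|$ as varifolds, every connected component's limit is a nonnegative integer multiple of $|\Sigma|$, and the multiplicities must sum to two. A closed $\eps_j$-cmc surface has area bounded uniformly below as $\eps_j \to 0$ by the monotonicity formula, so no component can collapse to a point and contribute multiplicity zero. Disconnectedness then rules out the case of a single multiplicity-two component, leaving exactly two components of multiplicity one each.

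Next I would upgrade the multiplicity-one varifold convergence of each $\Lambda_j^\pm$ to smooth convergence through the potentially singular point $p$, using Allard-style regularity for cmc surfaces with small bounded mean curvature. For $j$ large this makes each $\Lambda_j^\pm$ a smooth normal graph over $\Sigma$, and disjointness puts one on each side of $\Sigma$; say $\Lambda_j^+$ above and $\Lambda_j^-$ below. Because $\Theta^{\eps_j} \subset \Omega^*$ has $\bd \Theta^{\eps_j} = \Lambda_j^+ \cup \Lambda_j^-$, it must be exactly the open region between the two graphs, so the inward normal to $\Theta^{\eps_j}$ along $\Lambda_j^+$, and hence the mean curvature vector of $\Lambda_j^+$, points toward $\Sigma$, in line with the orientation of the foliation leaves guaranteed by (S-iii). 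Setting $\beta_+ := \max_{\Lambda_j^+} f > 0$ and evaluating at a point where $f|_{\Lambda_j^+}$ attains its maximum, $\Lambda_j^+$ is tangent to $\Sigma^{\beta_+}$ from below with identical signed mean curvature, so the strong maximum principle forces $\Lambda_j^+ = \Sigma^{\beta_+}$ and hence $\beta_+ = \eps_j$. An identical argument gives $\Lambda_j^- = \Sigma^{-\eps_j}$. Therefore $\Theta^{\eps_j} = \Omega^{\eps_j}$, which yields $W^{\eps_j} = A^{\eps_j}(\Omega^{\eps_j})$, contradicting Corollary \ref{p1}.

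The main obstacle will be justifying the multiplicity-one smooth extension through $p$ in the cmc setting, since the cited compactness theorem permits isolated singular points in the limit. I would rely on the cmc analogue of Allard regularity together with the almost-embeddedness of $\Lambda_j^\pm$ to exclude any residual multi-sheeted behavior on an individual component. A secondary point is the uniform lower area bound that rules out multiplicity-zero components; this follows from the monotonicity formula for surfaces of bounded mean curvature in a compact ambient manifold.
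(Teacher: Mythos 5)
Your strategy matches the paper's, which is extremely terse here: the paper simply says that if $\Lambda_j$ were disconnected some component $\Lambda_j'$ would be graphical over $\Sigma$, and that the maximum principle rules out such a component. You correctly unpack what is implicit: after showing that the two components converge multiplicity-one and smoothly (no neck can form on a disconnected surface), the maximum-principle comparison with the cmc foliation forces $\Lambda_j^+ = \Sigma^{\eps_j}$ and $\Lambda_j^- = \Sigma^{-\eps_j}$, hence $\Theta^{\eps_j} = \Omega^{\eps_j}$ and $W^{\eps_j} = A^{\eps_j}(\Omega^{\eps_j})$, contradicting Corollary~\ref{p1}. That final step via Corollary~\ref{p1} is essential --- the maximum principle alone cannot show that a graphical $\eps_j$-cmc component ``cannot exist,'' since $\Sigma^{\eps_j}$ itself is such a surface --- and your proof supplies it explicitly, which is cleaner than the paper's one-line dismissal.

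One small imprecision: at the maximum of $f|_{\Lambda_j^+}$ you assert $\Lambda_j^+$ is tangent to $\Sigma^{\beta_+}$ ``with identical signed mean curvature,'' but a priori the signed mean curvatures are $\eps_j$ and $\beta_+$, which need not be equal, and the second-derivative test at a one-sided interior tangency gives only an inequality (with respect to the downward normal, $\eps_j \ge \beta_+$), which by itself is the wrong direction to invoke the strong maximum principle. The clean fix is to use both extremal tangencies: at the maximum $\beta_+$ of $f|_{\Lambda_j^+}$ one gets $\eps_j \ge \beta_+$, and at the minimum $\beta_- > 0$ one gets $\eps_j \le \beta_-$, so $\eps_j \le \beta_- \le \beta_+ \le \eps_j$ forces $f \equiv \eps_j$ on $\Lambda_j^+$, i.e., $\Lambda_j^+ = \Sigma^{\eps_j}$. (The same comparison also shows $\Lambda_j^+$ cannot touch or cross $\Sigma$, justifying $\beta_- > 0$.) With this adjustment the argument is complete.
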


\begin{proof}
Otherwise there would be a component $\Lambda_j'$ of $\Lambda_j$ which is graphical over $\Sigma$.  The maximum principle shows that such a surface $\Lambda_j'$ cannot exist. 
\end{proof}

\begin{corollary}
The index of $\Lambda_j$ is one.
\end{corollary}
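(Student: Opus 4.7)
The plan is to combine the upper bound from Theorem \ref{main1} with a contradiction argument for the lower bound. Since $\Lambda_j$ arises from the $1$-parameter min-max of Theorem \ref{main1}, Zhou's index bound already gives $\ind(\Lambda_j)\le 1$, so it suffices to rule out $\ind(\Lambda_j)=0$ for $j$ large.

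I would argue by contradiction and suppose that, along a subsequence, each $\Lambda_j$ is stable for $A^{\eps_j}$. Stability for $A^{\eps_j}$ is strictly stronger than the usual cmc stability, since test functions are not required to have mean zero, so the classical curvature estimates for stable cmc hypersurfaces in $3$-manifolds (Schoen's estimate in the minimal case, extended by Rosenberg and others to bounded mean curvature) apply and give a uniform bound $\sup_j \|A_{\Lambda_j}\|_{L^\infty}\le C$. With such a $C^2$ bound in hand, the locally smooth convergence $\Lambda_j\to \Sigma$ on $M\setminus\{p\}$ noted just before the multiplicity proposition automatically upgrades to locally smooth convergence on all of $M$, in particular at $p$.

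Combined with the multiplicity-two conclusion of the first proposition of this subsection, smooth convergence forces $\Lambda_j$, for all large $j$, to be locally the disjoint union of two graphs over $\Sigma$ in a neighborhood of every point of $\Sigma$. Because $\Sigma$ is connected, these local sheets assemble into two globally disjoint, connected components of $\Lambda_j$. This contradicts the connectedness of $\Lambda_j$ established in the preceding proposition, so $\Lambda_j$ cannot be stable, and hence $\ind(\Lambda_j)\ge 1$. Together with the upper bound, this yields $\ind(\Lambda_j)=1$.

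The main technical hurdle is applying the stable-cmc curvature estimate to the almost-embedded surfaces of Definition \ref{ae}. For genuinely embedded stable cmcs the estimate is classical; in the almost-embedded case one applies it to each of the two local embedded sheets at any self-contact locus, which requires a small amount of care but introduces no new ideas. The argument is also quite $2$-dimensional, since analogous stability-implies-curvature-bound theorems are not available in higher dimensions, consistent with the hypothesis $n=2$ of the current subsection.
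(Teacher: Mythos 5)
Your proof takes the same route as the paper: rule out $\ind(\Lambda_j)=0$ via the curvature estimate for stable cmcs, upgrade to smooth convergence on all of $M$, and derive a contradiction from multiplicity-two convergence plus the connectedness of $\Lambda_j$. The one point you leave implicit --- and the paper invokes explicitly --- is that two-sidedness of $\Sigma$ is what lets the two local sheets assemble into two \emph{globally} disjoint components; for a one-sided $\Sigma$ a connected surface can converge smoothly with multiplicity two (e.g.\ the boundary of a tubular neighborhood), so the step ``connectedness of $\Sigma$ implies the sheets separate'' is really using both connectedness and two-sidedness.
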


\begin{proof}
Suppose to the contrary that $\ind(\Lambda_j) = 0$.  By the curvature estimates for stable cmcs (see Zhou \cite{Z}), the convergence $\Lambda_j \to \Sigma$ would consequently occur smoothly everywhere.  But, since $\Sigma$ is two-sided, it is impossible for a connected surface $\Lambda_j$ to converge smoothly to $\Sigma$ with multiplicity two.
\end{proof}

We can now give the proof of Theorem \ref{main2}. 

\begin{proof} (Theorem \ref{main2}) 
The proof is based on results of Chodosh, Ketover, and Maximo \cite{CKM}.  Although the results in \cite{CKM} are stated for minimal hypersurfaces, one can check that they continue to hold in our setting.  For the sake of completeness, we sketch the details of the argument. 

Let $A_j$ denote the second fundamental form of $\Lambda_j$. Recall that stable cmcs have curvature estimates (see Zhou \cite{Z}).  Therefore we must have 
\[
\lim_{j\to\infty} \max_{x\in \Lambda_j} \vert A_j(x)\vert = \infty
\]
since the convergence $\Lambda_j \to \Sigma$ is not smooth near $p$.  By a point picking argument together with the fact that $\ind(\Lambda_j) = 1$, it is possible to find a constant $C > 0$ and a sequence of points $p_j \in \Lambda_j$ with $\vert A_j(p_j)\vert \to \infty$ and such that 
\begin{equation*}
\label{cb}
\vert A_j(x)\vert \dist_{M}(x,p_j) \le C 
\end{equation*}
for all $x\in \Lambda_j$.  Moreover, it is clear that $p_j \to p$.

Fix a small number $\sigma > 0$.
Choose a sequence $\eta_j\to 0$ for which $\dist_M(p_j, p) <  \eta_j$ and 
\[
\lim_{j\to \infty} \eta_j \vert A_j(p_j)\vert = \infty. 
\] 
We claim that for $j$ sufficiently large there is a bound 
\[
\vert A_j(x)\vert \dist_M(x,p_j)  \le \frac{1}{4} 
\]
for all $x\in \Lambda_j \cap (B(p,\sigma) \setminus B(p_j,\eta_j))$.  Suppose not.  Then there would be points $x_j \in \Lambda_j \cap (B(p,\sigma) \setminus B(p_j,\eta_j))$ with 
\[
\vert A_j(x_j)\vert  \dist_M(x_j,p_j)> \frac 1 4.
\]
Let $\Lambda_j'$ be the surface $\Lambda_j$ rescaled by a factor $\dist_M(x_j,p_j)\inv$ about the point $p_j$.  Let $ A_j'$ denote the 2nd fundamental form of $\Lambda_j'$, and given a point $x\in \Lambda_j$ let $x'$ denote the corresponding point in $\Lambda_j'$. 

Notice that 
\[
\vert A_j'(x')\vert = {\vert A_j(x)\vert} {\dist_M(x_j,p_j)},  
\]
and hence the surfaces $A_j'$ have uniform curvature bounds on compact sets that do not include the origin.  Moreover, 
\[
\vert A_j'(0)\vert \ge  {\vert A_j(p_j)\vert} \eta_j \to \infty
\]
as $j\to \infty$. Therefore, (up to a subsequence) the surfaces $\Lambda_j'$ converge locally smoothly away from the origin to a complete, embedded minimal surface $\Lambda'$ with multiplicity two.  Since the mean curvature vectors of the two sheets of $\Lambda_j'$ point toward each other, it follows that $\Lambda'$ must be stable.  Hence $\Lambda'$ is a plane.  But this means that $\vert A_j'(x_j')\vert \to 0$, and this contradicts the way the points $x_j$ were chosen.

Next one combines the preceding curvature estimate with a Morse theory argument (Lemma 3.1 in \cite{CKM}) to conclude that $\Lambda_j \cap B(p,\sigma)$ and $\Lambda_j \cap B(p_j,\eta_j)$ have the same topology.  We are now reduced to showing that $\Lambda_j \cap B(p_j,\eta_j)$ is topologically a catenoid.    Let $\Lambda_j''$ be the surface $\Lambda_j$ rescaled by a factor $\eta_j\inv$ about the point $p_j$.  It is equivalent to check that $\Lambda_j''\cap B(0,1)$ is a catenoid. 

Let $\Lambda_j'''$ be the surface $\Lambda_j''$ rescaled by a factor $\vert A_j''(0)\vert$ about the origin.  Then $\Lambda_j'''$ has uniform curvature estimates everywhere.  Thus (up to a subsequence) the surfaces $\Lambda_j'''$ converge locally smoothly to a complete, embedded, two-sided, non-flat minimal hypersurface $\Lambda'''\subset \R^3$.  Moreover, we have $\ind(\Lambda''') \le 1$.  By the results in \cite{FCS} and  \cite{LR}, it follows that $\Lambda'''$ is a catenoid.  Fix a radius $R > 0$ so that $\vert A'''(y)\vert \dist(y,0) < 1/4$ for all $y \in \Lambda''' \setminus B(0,R)$.

We claim that for $j$ sufficiently large there is a bound 
\[
\vert A_j''(y)\vert \dist(y,0) \le \frac{1}{4}
\]
for all $y\in \Lambda_j'' \cap (B(0,2) \setminus B(0,R/\vert A_j''(0)\vert))$.  Suppose not.  Then there would be points $y_j \in \Lambda_j'' \cap (B(0,2)\setminus B(0,R/\vert A_j''(0)\vert))$ with 
\[
\vert A_j''(y_j)\vert \dist(y_j, 0) > \frac{1}{4}.
\]
Let $\Lambda_j''''$ be the surface obtained by scaling $\Lambda_j''$ by a factor $\dist(y_j,0)\inv$ about the origin.  

We claim that $\vert A_j''''(0)\vert \to \infty$ as $j\to \infty$.  Suppose this were not the case.  Then since 
\[
\vert A_j''''(0)\vert = \vert A_j''(0)\vert \dist(y_j,0),
\]
it must be that 
\[
\frac{R}{\vert A_j''(0)\vert} \le \dist(y_j,0) \le \frac{B}{\vert A_j''(0)\vert}
\]
for some constant $B$.  But then (up to a subsequence) $\Lambda_j''''$ must converge to a surface $\Lambda'''' = a\Lambda'''$ where 
\[
\frac{1}{B} \le a \le \frac{1}{R}.
\]   
Now observe that 
\[
\frac 1 4 < \vert A''''(\dist(y_j,0)\inv y_j)\vert = a\inv \vert A'''(a\inv \dist(y_j,0)\inv y_j)\vert. 
\]
This contradicts the choice of $R$. Therefore it must be that $\vert A_j''''(0)\vert \to \infty$ as $j\to \infty$. 

The surfaces $\Lambda_j''''$ have uniform curvature estimates on compact subsets that do not include the origin.  Hence arguing as above, it follows that (up to a subsequence) the surfaces $\Lambda_j''''$ converge locally smoothly to a plane away from the origin.  This contradicts the way the points $y_j$ were chosen.  Finally one repeats the Morse theory argument with this curvature estimate to deduce that $\Lambda_j'' \cap B(0,1)$ has the same topology as $\Lambda_j'' \cap B(0,R/\vert A_j''(0)\vert)$.  Since the surface $\Lambda_j''\cap B(0,R/\vert A_j''(0)\vert)$ has the same topology as $\Lambda_j''' \cap B(0,R)$, it follows that $\Lambda_j''\cap B(0,R/\vert A_j''(0)\vert)$ is topologically a catenoid, as needed.  This completes the proof of Theorem \ref{main2}. \end{proof} 

\section{The Index 1 Case} 
\label{sec4}

\subsection{Statement of Results} Now consider the index 1 case.  Fix a dimension $3 \le n+1\le 7$.  Let $(M^{n+1},g)$ be a closed Riemannian manifold and let $\Sigma^n \subset M^{n+1}$ be a closed, connected, two-sided, minimal hypersurface. 
Also assume the following. 
\begin{itemize}
\item[(U-i)]  The hypersurface $\Sigma$ has index 1 and the Jacobi operator $L$ for $\Sigma$ is non-degenerate.  Moreover, the unique solution $\phi$ to $L\phi = 1$ is positive. 
\end{itemize}
Note that by assumption (U-i) and the implicit function theorem, there is a neighborhood of $\Sigma$ that is foliated by constant mean curvature hypersurfaces whose mean curvature vectors point away from $\Sigma$. More precisely, we have the following.
\begin{itemize}
\item[(i)] There is a neighborhood $U$ of $\Sigma$ and a smooth function $f\f U\to (-\beta,\beta)$.\\
\item[(ii)] For each $\eps\in (-\beta,\beta)$, the set $\Sigma^\eps = f\inv(\eps)$ is a smooth hypersurface with constant mean curvature $\vert \eps\vert$.  Moreover, $\Sigma^0 = \Sigma$. \\

\item[(iii)] For each $\eps\in(-\beta,\beta)$, the mean curvature vector of $\Sigma^\eps$ points away from $\Sigma$. 
\end{itemize}

The next theorem is the main result of the paper in the index 1 case.  

\begin{theorem}
\label{main3}
Fix $(M,g)$ and $\Sigma$ for which the assumption (U-i) holds.  Then for each small $\eps >0$, there is a smooth, almost-embedded hypersurface $\Lambda^\eps$ of constant mean curvature $\eps$ in $M$.  The index of $\Lambda^\eps$ is at most 3 and $\area(\Lambda^\eps) \to 2\area(\Sigma)$ as $\eps \to 0$. 
\end{theorem}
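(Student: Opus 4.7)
The plan is to formalize the three-parameter min-max scheme outlined in the introduction, applying (an adaptation of) Zhou's Theorem \ref{mm} to the $A^\eps$ functional on the cube $X$. First I would construct an $\mathbf{F}$-continuous sweepout $\Phi \colon X \to \mathcal{I}_{n+1}(M, \Z_2)$ by prescribing its values on a discrete cubical subdivision of $X$ and interpolating via Lemma A.1 of \cite{ZZ} and Theorem 1.12 of \cite{Z}, exactly as in Proposition \ref{map}. The value $\Phi(0,0,0)$ is chosen to be the open set with boundary $\Sigma^\eps \cup \Sigma^{-\eps}$ whose inward normal agrees with the mean curvature vectors of the foliation; since those vectors point away from $\Sigma$ in the index $1$ case, this is the exterior of the slab between the two leaves, so it is a critical point of $A^\eps$, and I write $W$ for its $A^\eps$-value. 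The parameter $x$ shifts the top sheet to $\Sigma^{\eps+x}$, the parameter $y$ shifts the bottom sheet to $\Sigma^{-\eps-y}$, and the parameter $t$ opens a catenoidal neck between the two sheets via the construction underlying the catenoid estimate of Ketover--Marques--Neves \cite{KMN}, reaching a neck of waist $R \gg \eps$ at $t = R$.

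Next I would verify $\max_{z \in \bd X} A^\eps(\Phi(z)) = W$, attained only at $(0,0,0)$. On the bottom face $\{t=0\}$, a second-variation calculation in the spirit of Proposition \ref{ss} uses the index $1$ hypothesis (U-i) to detect a negative eigenvalue of the Jacobi operator on $\Sigma^{\pm\eps}$, yielding a quantitative drop $A^\eps(\Phi(x,y,0)) \le W - c\eps^2$ for a uniform $c > 0$ when $(x,y)$ lies on the boundary of the bottom square. On the side faces this deficit is combined with the catenoid estimate upper bound $C\eps^2/|\log\eps|$ for the cost of opening the neck; since $c\eps^2 > C\eps^2/|\log\eps|$ for $\eps$ small, the side-face values lie strictly below $W$. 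On the top face $\{t=R\}$, the macroscopic neck of waist $R$ produces a further definite drop depending only on $R$, so $A^\eps < W$ throughout.

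The main obstacle is showing the non-trivial width $L^\eps(\Pi) > W$, where $\Pi$ is the $(X,\bd X)$-homotopy class of $\Phi$. The quantitative-minimality argument of Corollary \ref{p1} fails because $\Phi(0,0,0)$ is an index $2$ saddle rather than a minimum. My approach is a quantitative saddle analysis: the two unstable directions of $A^\eps$ at this saddle are realized precisely by the bottom face of $\Phi$, along which $A^\eps$ descends to $W - c\eps^2$, while the transverse direction is realized by the $t$-parameter, along which $A^\eps$ rises above $W$ by some $\delta(\eps) > 0$ before falling again. A quantitative Morse lemma at the saddle, combined with the fact that $\Phi|_{\bd X}$ is not null-homotopic within the sublevel set $\{A^\eps \le W\}$ locally near the critical point, should force any competing sweepout in $\Pi$ to exceed $W$ by a uniform amount. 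A cleaner alternative is a contradiction argument: assuming $L^\eps(\Pi) = W$, min-max compactness yields a critical point of $A^\eps$ at level $W$ of index at most $3$ near $\Sigma^\eps \cup \Sigma^{-\eps}$; the only such critical point is $\Sigma^\eps \cup \Sigma^{-\eps}$ itself, and its index $2$ nature combined with the fact that it is pinned as an interior point of the boundary cube face is incompatible with it arising as the min-max limit, giving the contradiction.

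Finally I would apply Zhou's min-max theorem, with the minor adaptation of Theorem \ref{mms} to accommodate the fixed boundary data on $\bd X$, to produce the almost-embedded $\eps$-cmc hypersurface $\Lambda^\eps = \bd \Theta^\eps$ with $\ind(\Lambda^\eps) \le \dim X = 3$ and $A^\eps(\Theta^\eps) = L^\eps(\Pi)$. The area asymptotic $\area(\Lambda^\eps) \to 2\area(\Sigma)$ follows by sandwiching: $\area(\Lambda^\eps) \ge L^\eps(\Pi) > W$, while $\area(\Lambda^\eps) \le L^\eps(\Pi) + \eps \vol(M) \le \max_X A^\eps(\Phi) + \eps \vol(M) \le W + O(\eps^2/|\log\eps|) + \eps\vol(M)$, and $W \to 2\area(\Sigma)$ as $\eps \to 0$.
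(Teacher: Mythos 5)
Your construction of the three-parameter family, the boundary estimates on $\bd X$ (quadratic drop on the bottom face via the index-$2$ saddle, catenoid estimate $C\eps^2/|\log\eps|$ on the side faces, macroscopic drop on the top face), and the final appeal to Zhou's min-max theorem together with the sandwiching argument for the area asymptotic, all match the paper's proof in structure. One small correction: the paper applies Theorem~\ref{mm} directly, with no analogue of the adaptation Theorem~\ref{mms} needed here, because the $(X,\bd X)$-homotopy class already allows the boundary data to vary; the localization to $\Omega^*$ was only required in the stable case.

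The genuine gap is in your treatment of the non-trivial width $L^\eps(\Pi) > W$. Your first suggestion, a ``quantitative Morse lemma at the saddle,'' is the right idea but is not an argument: making it precise is exactly the content of Theorem~\ref{qm} (the Inauen--Marchese quantitative minimality result adapted to $A^\eps$), and of Proposition~\ref{nt}, which converts that theorem into the width bound by a concrete topological device. Specifically, one extracts from a competing sweepout $\Psi$ a piecewise-linear surface $S\subset X$ on which $\fla(\Psi(\cdot),\Omega^\eps)$ is pinned between $\gamma$ and $2\gamma$, with $\bd S$ a curve in the bottom face encircling $(0,0,0)$; then an interpolation homotopy (Theorem~3.8 in \cite{MN2}) caps $\bd S$ so that part~(ii) of Theorem~\ref{qm}, applied with $k=2$ to $\Omega^\eps$, forces a point of $S$ where $A^\eps(\Psi) \ge A^\eps(\Omega^\eps) + C\gamma^2$. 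Your sketch omits both the role of the flat-distance level set $S$ and the interpolation/capping step, which are not decorations but the mechanism by which the degree-theoretic conclusion of Theorem~\ref{qm} is actually invoked.

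Your alternative contradiction argument does not work. If $L^\eps(\Pi)=W$, Zhou's min-max theorem (Theorem~\ref{mm}) does not apply, since it requires the strict inequality $L^\eps(\Pi) > \max_{z\in Z}A^\eps(\Phi(z))$; the degenerate case provides no critical point. And even granting a critical point of index $\le 3$ at level $W$, an index-$2$ critical point is perfectly compatible with the index bound from a three-parameter min-max, so no contradiction ensues. The statement that $\Sigma^\eps\cup\Sigma^{-\eps}$ being ``pinned as an interior point of the boundary cube face'' is ``incompatible with it arising as the min-max limit'' is not a theorem, and I do not see how to make it one without redoing the quantitative saddle analysis.
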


To ensure that $\Lambda^\eps$ is a doubling of $\Sigma$, we have to make an additional assumption.  Namely, suppose the following additional property holds. 
\begin{itemize}
\item[(U-ii)] The varifold $2\Sigma$ is the only embedded minimal cycle in $M$ with area $2\area(\Sigma)$.
\end{itemize} 
Then we have the following. 

\begin{theorem}
\label{main4} 
Fix $(M,g)$ and $\Sigma$ for which the assumptions (U-i) and (U-ii) hold.  Then the surfaces $\Lambda^\eps$ from Theorem \ref{main3} converge to $2\Sigma$ as varifolds as $\eps \to 0$. 
\end{theorem}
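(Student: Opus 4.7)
The plan is to argue by varifold compactness combined with the rigidity hypothesis (U-ii). Fix any sequence $\eps_j \to 0$ and let $\Lambda_j = \Lambda^{\eps_j}$ be the almost-embedded $\eps_j$-cmc hypersurfaces produced by Theorem \ref{main3}. By that theorem, $\area(\Lambda_j) \to 2\area(\Sigma)$ and $\ind(\Lambda_j) \le 3$. In particular the induced varifolds $\vert \Lambda_j\vert$ have uniformly bounded mass, uniformly bounded index, and mean curvatures tending to zero. After passing to a subsequence I may therefore assume that $\vert \Lambda_j\vert \to V$ as varifolds for some integer rectifiable varifold $V$ on $M$.

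The next step is to identify $V$. By the compactness theorem for almost-embedded cmc hypersurfaces with bounded area and index (Zhou \cite{Z}), the convergence $\vert \Lambda_j\vert \to V$ is smooth and graphical away from a finite set of points in $\supp(V)$. Because the mean curvatures $\eps_j$ tend to zero, the limit $V$ is supported on a smooth almost-embedded minimal hypersurface, and the convergence of the areas combined with the smooth sheeted picture gives
\[
\mass(V) = \lim_{j\to \infty}\area(\Lambda_j) = 2\area(\Sigma).
\]
Since $\Sigma$ is two-sided and each component of $\supp(V)$ is minimal, the maximum principle precludes tangential contact between distinct sheets; hence $V$ decomposes as a sum $\sum_{k=1}^\ell a_k\Gamma_k$ of positive integer multiples of disjoint, smooth, embedded minimal hypersurfaces. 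In other words $V$ is an embedded minimal cycle in the sense of Section \ref{not}, of mass exactly $2\area(\Sigma)$.

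Assumption (U-ii) then forces $V = 2\Sigma$. Since every subsequence of $\{\vert \Lambda^\eps\vert\}_{\eps>0}$ admits a further subsequence converging to $2\Sigma$, the full family $\vert \Lambda^\eps\vert$ converges to $2\Sigma$ as varifolds as $\eps \to 0$, which is the conclusion of Theorem \ref{main4}.

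The main obstacle in this scheme is verifying that Zhou's compactness theorem applies in our setting in such a way that no mass is lost in the limit, so that $V$ is genuinely an integer multiplicity minimal cycle of mass $2\area(\Sigma)$ rather than a varifold with smaller total mass. This is precisely where the uniform index bound $\ind(\Lambda^\eps) \le 3$ from Theorem \ref{main3} is essential: it confines possible curvature concentration to finitely many points and yields smooth graphical convergence elsewhere, so total mass is preserved in the limit. Once this is established, the rigidity assumption (U-ii) closes the argument immediately.
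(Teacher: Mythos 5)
Your proof is correct and takes essentially the same approach as the paper: invoke Zhou's compactness theorem for almost-embedded cmc hypersurfaces with bounded area and index (together with $\eps_j \to 0$) to extract a subsequential varifold limit that is an embedded minimal cycle of mass $2\area(\Sigma)$, then apply (U-ii) to identify it as $2\Sigma$. The paper's proof is more terse and only states convergence "up to a subsequence," while you correctly make explicit the standard subsequence-of-subsequences step that upgrades this to convergence of the full family.
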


\begin{rem}
It is natural to ask whether hypothesis (U-ii) significantly restricts the applicability of Theorem \ref{main4}.  In Appendix \ref{gm} we show that (U-ii) holds for a generic set of metrics on $M$. 
\end{rem} 

\subsection{Construction of the three parameter family} 
In this section, we formally construct the three parameter family $\Phi$ described in the introduction.  Fix $(M,g)$ and $\Sigma$ satisfying the assumption (U-i) and fix a small number $\eps > 0$.  For simplicity, we give the construction in the case where $n+1 =3$.  The cases $4 \le n+1 \le 7$ are similar but easier since one can use cylindrical necks rather than catenoidal ones.

Before constructing the three parameter family, we need to introduce some notation.  Write $\Sigma^\beta$ as the normal graph of a function $\psi_\beta$ over $\Sigma$.  Recall that $\phi$ is a positive function on $\Sigma$ that solves $L\phi = 1$, and observe that $\psi_\beta /\beta \to \phi$ smoothly as $\beta \to 0$.

The following notation is taken from \cite{KMN}.  
 Fix a point $p\in \Sigma$ and for $x\in \Sigma$ let $r(x)$ be the distance from $x$ to $p$.   Fix a number $R > 0$ to be specified later.  
For each $0 \le t \le R$ define a function $\eta_t$ on $\Sigma$ by 
\[
\eta_t(x) = \begin{cases}
1, &\text{if } r(x) \ge t\\
(1/\log(t))(\log t^2 - \log r(x)), &\text{if } t^2 \le r(x) \le t\\
0 &\text{if } r(x) \le t^2.
\end{cases}
\] 
This function $\eta_t$ will be used to construct the necks.

\begin{defn} Let $X = [-\eps/2,\eps/2]^2 \times [0,R]$ and define 
\[
\Phi \f X \to \mathcal I^{n+1}(M,\Z_2)
\]
as follows.  First, for each $(x,y,t)\in X$ let $S(x,y,t)$ be the union of the graph of $\eta_t \psi_{\eps+x}$ with the graph of $\eta_t  \psi_{-\eps+y}$.  This is a piecewise smooth surface.  Choose a point $q\in \Sigma$ with $r(q) \gg R$.  Then let $\Phi(x,y,t)$ be the open set in $M$ such that $\bd \Phi(x,y,t) = S(x,y,t)$ and $q\notin \Phi(x,y,t)$.  The family $\Phi$ is continuous in the $\mathbf F$ topology. 
\end{defn}
In the next sequence of propositions, we prove the two key properties of the family $\Phi$ outlined in the introduction.

\begin{prop}
The surface $\Sigma^\eps \cup \Sigma^{-\eps}$ is an index two critical point of $A^\eps$.  Moreover, there is a constant $c > 0$ that doesn't depend on $\eps$ such that 
\[
A^\eps(\Phi(x,y,0)) \le A^{\eps}(\Phi(0,0,0)) - c({x^2+y^2})
\]
for all $(x,y,0) \in X$. 
\end{prop}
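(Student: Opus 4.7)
The plan is to analyze the smooth two-variable function $F_\eps(x,y) := A^\eps(\Phi(x,y,0))$ via Taylor expansion at the origin and read off both statements simultaneously.

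Since $\eta_0\equiv 1$, the boundary $\bd \Phi(x,y,0) = \Sigma^{\eps+x}\cup \Sigma^{-\eps+y}$ is a disjoint union of cmc hypersurfaces. At $(x,y)=(0,0)$ both mean curvature vectors point away from $\Sigma$ by (U-i), hence out of the sandwich bounded by $\Sigma^\eps\cup \Sigma^{-\eps}$, and therefore into $\Phi(0,0,0)$, which by construction is the component of the complement of the sandwich not containing $q\in\Sigma$.  Thus $\bd \Phi(0,0,0)$ has constant mean curvature $\eps$ with respect to the inward normal of $\Phi(0,0,0)$, so $\Phi(0,0,0)$ is an $A^\eps$-critical point.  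The second variation formula for $A^\eps$, identical to the one used in the proof of Proposition~\ref{ss}, reads
\[
\delta^2 A^\eps\eval_{\Phi(0,0,0)}(u,v) = -\int_{\Sigma^\eps} u L_\eps u - \int_{\Sigma^{-\eps}} v L_{-\eps} v,
\]
and is block-diagonal on $C^\infty(\Sigma^\eps)\oplus C^\infty(\Sigma^{-\eps})$.  Eigenvalue perturbation, combined with the non-degeneracy of $L$ from (U-i), implies that each $L_{\pm\eps}$ inherits exactly one negative eigenvalue from $L$ for $\eps$ small, giving Morse index exactly two.

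For the quadratic estimate I would parameterize: $\Sigma^{\eps+x}$ is the normal graph over $\Sigma^\eps$ of $xw + O(x^2)$, and $\Sigma^{-\eps+y}$ is the normal graph over $\Sigma^{-\eps}$ of $yw' + O(y^2)$, where $w>0$ on $\Sigma^\eps$ and $w'>0$ on $\Sigma^{-\eps}$ are the normal velocities of the cmc foliation.  Differentiating the foliation identity $H(\Sigma^\beta)=\beta$ and invoking the linearization formula for the mean curvature operator gives $L_\eps w = 1$ and $L_{-\eps} w'=1$; this is the sign contrast with the stable case (which had $L_\eps \psi = -1$) and uses precisely the positivity in (U-i).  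Because the $x$-variation is supported on $\Sigma^\eps$ and the $y$-variation on $\Sigma^{-\eps}$, the Hessian of $F_\eps$ at $(0,0)$ is diagonal with
\[
(F_\eps)_{xx}(0,0) = -\int_{\Sigma^\eps} w L_\eps w = -\int_{\Sigma^\eps} w, \qquad (F_\eps)_{yy}(0,0) = -\int_{\Sigma^{-\eps}} w', \qquad (F_\eps)_{xy}(0,0) = 0.
\]
As $\eps\to 0$ both $w$ and $w'$ converge smoothly to the positive Jacobi function $\phi$ from (U-i), so there is a constant $c>0$ independent of $\eps$ with $(F_\eps)_{xx}(0,0),\, (F_\eps)_{yy}(0,0)\le -2c$ once $\eps$ is small enough.

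To close out, because the family $(\eps,x,y)\mapsto \Phi(x,y,0)$ depends smoothly on all three parameters on a fixed compact set, the third derivatives of $F_\eps$ in $(x,y)$ are bounded by a constant $K$ independent of $\eps$.  Taylor's theorem then gives
\[
F_\eps(x,y) \le F_\eps(0,0) - c(x^2+y^2) + K(x^2+y^2)^{3/2},
\]
and since $(x,y)\in [-\eps/2,\eps/2]^2$ forces the cubic term to be at most $K\eps(x^2+y^2)$, the quadratic term absorbs the cubic one once $\eps$ is sufficiently small, producing the desired bound.  The only real subtlety is arranging for $c$ to be independent of $\eps$, which reduces entirely to smooth dependence of the cmc foliation and its Jacobi data on the mean curvature parameter.
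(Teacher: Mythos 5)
Your argument is correct and follows essentially the same route as the paper: both observe that $A^\eps(\Phi(x,y,0))$ splits into a function of $x$ plus a function of $y$ because the two variations are supported on disjoint sheets, and both derive the uniform quadratic decay from the uniform positivity of the foliation's normal velocity, which is tied to $\phi > 0$. The only cosmetic difference is that the paper integrates the first-variation identity for $\tfrac{d}{dt}A^\eps(\Sigma^t)$ directly to get the quadratic bound, while you diagonalize the Hessian and control the cubic Taylor remainder -- a slightly more elaborate packaging of the same ingredient.
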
 

\begin{proof}
Since $\Sigma$ is an index one critical point of $A^0$, it follows that $\Sigma^\eps$ is an index one critical point of $A^\eps$.  Likewise $\Sigma^{-\eps}$ is an index one critical point of $A^\eps$ and therefore the union $\Sigma^{\eps}\cup \Sigma^{-\eps}$ is an index two critical point of $A^\eps$. 
Next we study how $A^\eps(\Sigma^t)$ depends on $t$.  Let $L_t$ be the Jacobi operator on $\Sigma^t$.  Since the Jacobi operator on $\Sigma$ is non-degenerate, $L_t$ is also non-degenerate for all sufficiently small $t$.  Moreover, the unique solution $f_t$ to $L_tf_t = 1$ is uniformly positive for $t$ small enough.  Since  
\begin{align*}
\frac{d}{dt} A^\eps(\Sigma^t) = \int_{\Sigma_t} (t - \eps)f_t  \ dv_{\Sigma_t},
\end{align*} 
it follows that there is a constant $c > 0$ such that 
\[
A^\eps(\Sigma^t) \le A^{\eps}(\Sigma^\eps) - c\vert t-\eps\vert^2
\]
for all $0 \le t \le 2\eps$.  The same reasoning applies to $\Sigma^{-\eps}$ and this implies the proposition. 
\end{proof} 

\begin{lem}
Let $c$ be the constant from the previous proposition.  Then for all $\eps$ sufficiently small and all $(x,y,t)\in X$ there is an inequality
\[
\area(\bd \Phi(x,y,t)) \le \area(\bd \Phi (x,y,0)) + \frac{c\eps^2}{2}.
\]
Moreover, $\area(\bd \Phi(x,y,R)) < \area(\bd \Phi(x,y,0))$ for all choices of $x$ and $y$.
\end{lem}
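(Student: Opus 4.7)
The plan is to decompose $S(x,y,t) = \bd \Phi(x,y,t)$ into three pieces according to the regions $\{r \ge t\}$, $\{t^2 \le r \le t\}$, and $\{r \le t^2\}$ on $\Sigma$, and then apply a catenoid-type gradient estimate on the annular middle piece. On $\{r \le t^2\}$ both graphs $\eta_t\psi_{\eps+x}$ and $\eta_t\psi_{-\eps+y}$ vanish identically, so the two component sheets both coincide with $\Sigma$ and cancel mod $2$, contributing nothing to the area. On $\{r \ge t\}$ the cutoff $\eta_t$ equals one, so the two sheets are exactly $\Sigma^{\eps+x}$ and $\Sigma^{-\eps+y}$ restricted to the complement of $B_t(p)$. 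Consequently
\[
\area(S(x,y,t)) = \area(S(x,y,0)) - A_t^{\text{in}} + A_t^{\text{neck}},
\]
where $A_t^{\text{in}}$ denotes the area of the two original CMC sheets over $B_t(p)\subset\Sigma$, and $A_t^{\text{neck}}$ denotes the area of the two cutoff graphs over the annulus $B_t(p)\setminus B_{t^2}(p)$.

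The term $A_t^{\text{in}}$ equals $2\area_{\Sigma}(B_t(p)) + O(\eps^2 t^2)$, since each CMC sheet is a normal graph of a function $\psi_\beta=O(\eps)$. To estimate $A_t^{\text{neck}}$, I would expand the area integrand and use the pointwise inequality
\[
|\grad(\eta_t\psi_\beta)|^2 \le \frac{C\eps^2}{r^2\log^2 t} + C\eps^2,
\]
valid in the annulus, with dominant contribution from $|\grad\eta_t|^2 \psi_\beta^2$. The logarithmic profile of $\eta_t$ gives $\int_{t^2}^t r^{-1}\,dr = |\log t|$, so integrating this gradient term against the area form on $\Sigma$ yields a contribution of size $C\eps^2/|\log t|$. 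Hence $A_t^{\text{neck}} \le 2\area_\Sigma(B_t(p)\setminus B_{t^2}(p)) + C\eps^2/|\log t|$, and subtracting produces the master estimate
\[
\area(S(x,y,t)) \le \area(S(x,y,0)) - 2\area_\Sigma(B_{t^2}(p)) + \frac{C\eps^2}{|\log t|} \le \area(S(x,y,0)) - \pi t^4 + \frac{C\eps^2}{|\log t|}
\]
for $t$ small, using $\area_\Sigma(B_{t^2}(p)) = \pi t^4 + O(t^8)$.

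For the first inequality of the lemma, since $t\le R$ one has $|\log t|\ge|\log R|$, so the error is at most $C\eps^2/|\log R|$. I would fix $R$ once and for all small enough (depending only on $c$ and the geometry of $\Sigma$) that $C/|\log R|\le c/2$; the bound $\area(S(x,y,t))\le \area(S(x,y,0))+c\eps^2/2$ then holds uniformly in $(x,y,t)\in X$. For the second inequality, at $t=R$ the negative term $-\pi R^4$ strictly dominates $C\eps^2/|\log R|$ once $\eps^2 \ll R^4|\log R|$, giving $\area(\bd\Phi(x,y,R)) < \area(\bd\Phi(x,y,0))$ for all sufficiently small $\eps$, uniformly in $(x,y)$.

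The main technical obstacle is obtaining the sharp $O(\eps^2/|\log t|)$ bound on the neck contribution rather than the naive $O(\eps^2)$; this logarithmic gain is the essence of the catenoid estimate of Ketover--Marques--Neves. It relies on the precise $\log r$ profile of $\eta_t$ combined with $|\psi_\beta|=O(\eps)$, and requires careful tracking of cross terms and ambient curvature corrections when expanding $|\grad(\eta_t\psi_\beta)|^2$. One must also check that the two sheets, living on opposite sides of $\Sigma$, contribute no overlap area outside $\{r\le t^2\}$; this is immediate from the strict positivity of $\psi_\beta$ for $\beta>0$ and strict negativity for $\beta<0$ once $\eps$ is small.
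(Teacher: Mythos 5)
Your argument is essentially the same as the paper's: decompose $\bd\Phi(x,y,t)$ into the outer region $\{r\ge t\}$ where the sheets agree with the CMC graphs, the annulus $\{t^2\le r\le t\}$ where the logarithmic cutoff lives, and the inner disk $\{r\le t^2\}$ that cancels mod $2$; use the logarithmic cutoff trick to bound the neck's excess area by $O(\eps^2/|\log t|)$; and fix $R$ once and for all (depending only on $c$ and the geometry of $\Sigma$ and $\phi$, not on $\eps$) so that this stays below $c\eps^2/2$ for all $t\in[0,R]$. The paper packages the graph-area expansion by citing Proposition~2.5 of Ketover--Marques--Neves and working with the normalized function $g_{\gamma,t}=\psi_\gamma\eta_t/\gamma$ (setting $h=\gamma$), while you expand by hand, but that is cosmetic. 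One place where your write-up is slightly sharper: your master estimate keeps the negative term $-2\area_\Sigma(B_{t^2})\approx -\pi t^4$ gained by pinching off the inner disk, which at $t=R$ gives a fixed drop dominating the $O(\eps^2)$ errors and so immediately yields the strict inequality $\area(\bd\Phi(x,y,R))<\area(\bd\Phi(x,y,0))$; the paper's displayed chain drops this negative term before concluding, so as written it establishes the non-strict bound explicitly and leaves the second assertion to the reader. Two small bookkeeping points to be careful about: the $\eta_t^2|\grad\psi_\beta|^2$ and ambient curvature terms contribute an additional $O(\eps^2 t^2)$ to $A_t^{\mathrm{neck}}$ (harmless, since $t^2\le 1/|\log t|$ for $t$ small and $R$ is fixed small), and the $O(\eps^2 t^2)$ error in $A_t^{\mathrm{in}}$ is two-sided, so one should use the lower bound $A_t^{\mathrm{in}}\ge 2\area_\Sigma(B_t)-C\eps^2 t^2$ when forming the upper bound on $\area(S(x,y,t))$.
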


\begin{proof}  
This essentially follows from the proof of Theorem 2.4 in \cite{KMN}.  We include the details for the sake of clarity.
Let $\gamma = \eps + x$ and let $g_{\gamma,t} = \psi_\gamma \eta_t/\gamma$.    
Note that there is a bound $\|g_{\gamma,t}\|_{L^\infty} \le C$ where $C$ is a constant that does not depend on $\gamma$ or $t$.   

For a function $f$ on $B(p,R)\subset\Sigma$, let $S_{f}$ be the normal graph of $f$ over $B(p,R)$.  Proposition 2.5 in \cite{KMN} gives the existence of an $h_0 > 0$ so that for $h \le h_0$ there is an expansion 
\begin{align*}
\area(S_{hg_{\gamma,t}}) &\le \area(B_t) - \area(B_{t^2}) 
\\&\qquad+ \frac{h^2}{2}\int_{B_t \setminus B_{t^2}} (\vert \grad g_{\gamma,t}\vert^2 - g_{\gamma,t}^2(\vert A\vert^2 + \ric(N,N))) \\ 
&\qquad+ Ch^3 \int_{B_t\setminus B_{t^2}} (1+\vert \grad g_{\gamma,t}\vert^2).
\end{align*}
Moreover, the constants $h_0$ and $C$ do not depend on $\eps$ or $t$.  

In particular, for $\gamma < h_0$ we can set $h = \gamma$ in the above expansion to get 
\begin{align*}
\area(S_{\psi_\gamma \eta_t}) &\le \area(B_t) - \area(B_{t^2})\\ &\qquad+ \frac{\gamma^2}{2}\int_{B_t\setminus B_{t^2}} (\vert \grad g_{\gamma,t}\vert^2 - g_{\gamma,t}^2(\vert A\vert^2 + \ric(N,N))) \\ 
&\qquad+ C\gamma^3 \int_{B_t\setminus B_{t^2}} (1+\vert \grad g_{\gamma,t}\vert^2).
\end{align*}
Recall that $\psi_{\gamma}/\gamma \to \phi$ smoothly as $\gamma \to 0$.  Therefore, taking $R$ small enough and $\eps$ small enough, we get that 
\begin{gather*}
\frac{\gamma^2}{2}\left\vert \int_{B_t\setminus B_{t^2}} g_{\gamma}^2(\vert A\vert^2 + \ric(N,N))\right\vert \le \frac {c\gamma^2}{128}.
\end{gather*}
Shrinking $\eps$ further to absorb the $\gamma^3$ terms, this implies that 
\[
\area(S_{\psi_\gamma \eta_t}) \le \area(B_t)
+ \frac{c\gamma^2}{128} + \gamma^2\int_{B_t\setminus B_{t^2}} \vert \grad g_{\gamma,t}\vert^2.
\]
Finally, using the logarithmic cutoff trick as in \cite{KMN} together with the fact that $\psi_\gamma/\gamma \to \phi$ as $\gamma\to 0$, it follows that 
\[
\int_{B_t\setminus B_{t^2}} \vert \grad g_{\gamma,t}\vert^2 \le \frac{c}{128} + \frac{A}{\vert \log t\vert}
\]
where $A$ is a constant that does not depend on $\gamma$ or $t$.  For $R$ small enough, this implies that 
\[
\area(S_{\psi_\gamma\eta_t}) \le \area(B_R) + \frac{c\gamma^2}{32}
\] 
for all $t\in [0,R]$.  

Therefore, letting $\Omega^+ = \{f > 0\} = \cup_{\beta > 0} \Sigma^\beta$, it follows that
\begin{align*}
\area(\bd \Phi(x,y,t) &\cap \Omega^+) -\area(\bd\Phi(x,y,0)\cap \Omega^+) \\
&\le \area(S_{\psi_\gamma\eta_t}) - \area(B_R)(1 - C\eps^2)\\
&\le C\eps^2 \area(B_R) + \frac{c\gamma^2}{32}\\
&\le \frac{c\eps^2}{4}
\end{align*}
provided $R$ is small enough. A similar argument shows that the above inequality is also true with $\Omega^+$ replaced by $\Omega^- = \{f < 0\} = \cup_{\beta < 0} \Sigma^\beta$.  This proves the lemma.
\end{proof} 

\begin{prop}
For every $(x,y,t)\in \bd X$ it holds that
\[
A^\eps(\Phi(x,y,t)) \le A^\eps(\Phi(0,0,0))
\]
with equality if and only if $(x,y,t) = (0,0,0)$. 
\end{prop}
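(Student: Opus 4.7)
The plan is to split $\bd X$ into its three kinds of faces and verify the inequality on each: the bottom face $\{t = 0\}$, the top face $\{t = R\}$, and the four side faces where $|x| = \eps/2$ or $|y| = \eps/2$. The bottom face is immediate from the preceding proposition, which gives $A^\eps(\Phi(x,y,0)) \le A^\eps(\Phi(0,0,0)) - c(x^2+y^2)$, strict whenever $(x,y) \ne (0,0)$.

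A structural observation used on the remaining two cases is that $\vol(\Phi(x,y,t))$ is monotonically nondecreasing in $t$. This is immediate from the construction: since $\eta_t$ is monotonically decreasing in $t$ and $\psi_{\eps+x} - \psi_{-\eps+y} > 0$ on $\Sigma$, the two graphs defining $\bd\Phi(x,y,t)$ move monotonically toward $\Sigma$ as $t$ grows, enlarging the open set $\Phi(x,y,t)$ (which, because of the choice $q \notin \Phi$, is the complement of the region pinched between the graphs). For the top face, combining this volume monotonicity with the strict area decrease from the lemma yields
\begin{align*}
A^\eps(\Phi(x,y,R)) &= \area(\bd\Phi(x,y,R)) - \eps\vol(\Phi(x,y,R)) \\
&< \area(\bd\Phi(x,y,0)) - \eps\vol(\Phi(x,y,0)) = A^\eps(\Phi(x,y,0)),
\end{align*}
which is $\le A^\eps(\Phi(0,0,0))$ by the preceding proposition, so $A^\eps(\Phi(x,y,R)) < A^\eps(\Phi(0,0,0))$ strictly.

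For the side faces, $x^2 + y^2 \ge \eps^2/4$, so the preceding proposition supplies the margin $-c(x^2+y^2) \le -c\eps^2/4$, while the lemma together with volume monotonicity bounds $A^\eps(\Phi(x,y,t)) - A^\eps(\Phi(x,y,0))$ from above by the area increase $\area(\bd\Phi(x,y,t)) - \area(\bd\Phi(x,y,0))$. The main obstacle is that the lemma's stated bound $c\eps^2/2$ on this increase is slightly too large to be beaten by $c\eps^2/4$. I would close the gap by revisiting the lemma's proof: the key estimate $\int_{B_t \setminus B_{t^2}} |\grad g_{\gamma,t}|^2 \le c/128 + A/|\log t|$ can be sharpened to $c/M$ for any prescribed $M$ once $R$ is shrunk so that $|\log R|$ is sufficiently large, which in turn tightens the area penalty to $c\eps^2/M$. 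Choosing $M > 4$ gives the strict inequality on the side faces and completes the proof.
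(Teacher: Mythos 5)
Your decomposition into bottom face, top face, and side faces is exactly the paper's, and you use the same two ingredients (the quadratic drop from the preceding proposition and the $c\eps^2/2$ neck penalty from the lemma), together with the volume monotonicity in $t$ that the paper also uses implicitly in the line $\area(\bd\Phi(x,y,t)) - \eps\vol(\Phi(x,y,t)) \le A^\eps(\Phi(x,y,0)) + c\eps^2/2$. The constant discrepancy you flag is real: the paper's last displayed inequality reads $A^\eps(\Phi(x,y,0)) + \frac{c\eps^2}{2} \le A^\eps(\Phi(0,0,0)) - \frac{c\eps^2}{2}$, which would require $A^\eps(\Phi(x,y,0)) \le A^\eps(\Phi(0,0,0)) - c\eps^2$, but on a side face the preceding proposition only guarantees a drop of $c(x^2+y^2) \ge c\eps^2/4$, so as literally written the two bounds do not close. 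Your diagnosis of where the slack lives in the lemma's proof is also correct: every contribution to the neck penalty --- the $\ric$/$|A|^2$ term, the $|\grad(\psi_\gamma/\gamma)|^2$ term, the $A/|\log t|$ term from the logarithmic cutoff, and the $C\eps^2\area(B_R)$ defect --- goes to zero as $R \to 0$, so for $R$ small enough the per-side penalty can be made, say, $c\eps^2/16$, which then beats $c\eps^2/4$. In short, your proof is the paper's proof with the constants tracked honestly; the fix you propose is the one the paper implicitly relies on.
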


\begin{proof}
Fix a point $(x,y,t) \in \bd X$.  The proposition is clearly true if $t = 0$, and the proposition is true if $t = R$ by the previous lemma.  So assume that $0 < t < R$.  The previous lemma implies that 
\[
\area(\bd \Phi(x,y,t)) \le \area(\bd \Phi(x,y,0)) + \frac{c\eps^2}{2}.
\]
It follows that 
\begin{align*}
A^\eps(\Phi(x,y,t)) &= \area(\bd \Phi(x,y,t)) - \eps \vol(\Phi(x,y,t)) \\
&\le A^\eps(\Phi(x,y,0)) + \frac{c\eps^2}{2}\\
&\le A^{\eps}(\Phi(0,0,0))-\frac{c \eps^2}{2}.
\end{align*}
This proves the proposition.
\end{proof}

\subsection{Non-trivial Width} 

Again fix $(M^{n+1},g)$ and $\Sigma$ satisfying assumption (U-i) and fix a small number $\eps > 0$.  Let $\Pi$ be the $(X,\bd X)$-homotopy class of the map $\Phi$ constructed in the previous section.  Let $\Omega^\eps = \Phi(0,0,0)$ so that $\bd \Omega^\eps = \Sigma^\eps \cup \Sigma^{-\eps}$.  The goal of this section is to prove that the width of $\Pi$ is non-trivial, i.e., to check that 
\[
L^\eps(\Pi) > A^\eps(\Omega^\eps) = \max_{(x,y,t)\in \bd X} A^\eps(\Phi(x,y,t)).
\]
The proof is based on the quantitative minimality results in Appendix \ref{QM}.

\begin{prop} 
\label{nt} 
There are constants $\gamma > 0$ and $\eta > 0$ and $C > 0$ such that the following property holds.  If $\Psi\f X \to \mathcal I_{n+1}(M,\Z_2)$ is an $\mathbf F$-continuous map with 
\[
\sup_{(x,y,t)\in  \bd X} \mathbf F(\Psi(x,y,t), \Phi(x,y,t)) < \eta
\]
then there is a point $(x_0,y_0,t_0)\in X$ such that 
\[
A^\eps(\Psi(x_0,y_0,t_0)) \ge A^\eps(\Omega^\eps) + C\gamma^2.
\]
\end{prop}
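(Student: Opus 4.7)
The plan is to combine a quantitative-minimality estimate at the index-two critical point $\Omega^\eps$ with a topological degree argument that exploits the fact that the bottom face of $X$ parametrizes the unstable directions.

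First, I will identify the critical-point structure: the varifold $\bd \Omega^\eps = \Sigma^\eps \cup \Sigma^{-\eps}$ is an index-two critical point of $A^\eps$, as shown in the previous proposition, with the two unstable directions being the normal displacements of $\Sigma^{\pm\eps}$ along the cmc foliation $f$. These directions are parametrized to leading order by the two coordinates $(x,y)$ of the bottom face $F = [-\eps/2, \eps/2]^2 \times \{0\}$ via $\Phi|_{\{t=0\}}$. I will adapt the results of Appendix \ref{QM} to produce an $\mathbf F$-neighborhood $\mathcal V$ of $\Omega^\eps$, a continuous projection $\pi_u : \mathcal V \to \R^2$ onto the unstable coordinates (satisfying $\pi_u(\Phi(x,y,0))=(x,y)$), and a constant $C_0 > 0$ such that
\[
A^\eps(\Omega) \ge A^\eps(\Omega^\eps) + C_0\, \mathbf F(\Omega, \Omega^\eps)^2 \quad \text{for every } \Omega \in \mathcal V \text{ with } \pi_u(\Omega) = 0.
\]
This is the mountain-pass analogue of Corollary \ref{qmss}: once the (finitely many) unstable modes are projected out, $\Omega^\eps$ behaves like a strictly stable minimizer.

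Second, I will use topological degree to locate an interior point of $X$ at which $\Psi$ lies on the stable fiber $\{\pi_u = 0\}$. If $\sup_{\bd X} \mathbf F(\Psi, \Phi) < \eta$ with $\eta$ small enough that $\Psi|_F \subset \mathcal V$, then $\pi_u \circ \Psi|_{\bd F}$ is $C^0$-close to the identity on the square $\bd F$. By the Brouwer degree theorem applied to the disk $F$, there is $(x_0, y_0) \in F$ with $\pi_u(\Psi(x_0,y_0,0)) = 0$. This alone is insufficient, since $\mathbf F(\Psi(x_0, y_0, 0),\Omega^\eps)$ might be arbitrarily small (consider $\Psi = \Phi$ exactly, which yields $(x_0, y_0)=(0,0)$ and no distance). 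The third parameter $t$ rescues us: the 1-parameter family $\tau(t) := \Psi(x_0, y_0, t)$ satisfies $\tau(R) \approx \Phi(x_0, y_0, R)$ on $\bd X$, and $\Phi(x_0, y_0, R)$ has a neck of macroscopic size $R$, so $\mathbf F(\tau(R), \Omega^\eps)$ is bounded below by a definite $\gamma_0 > 0$ independent of the perturbation. A refined three-dimensional degree argument --- applied to the continuous map $(x, y, t) \mapsto (\pi_u(\Psi), \min(\mathbf F(\Psi, \Omega^\eps), \gamma_0/2))$ into a neighborhood of the origin in $\R^3$ --- will produce a point $(x_0, y_0, t_0) \in X$ at which $\pi_u(\Psi(x_0,y_0,t_0)) = 0$ and $\mathbf F(\Psi(x_0,y_0,t_0), \Omega^\eps) = \gamma$ for a chosen $\gamma \in (0, \gamma_0/2]$. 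The estimate above then yields $A^\eps(\Psi(x_0, y_0, t_0)) \ge A^\eps(\Omega^\eps) + C_0\gamma^2$, completing the proof with $C = C_0$ and the chosen $\gamma$.

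The main obstacle will be making the final degree argument precise in the $\mathbf F$-continuous setting. The coordinate map $(\pi_u, \mathbf F(\cdot, \Omega^\eps))$ is only continuous, so one cannot invoke smooth degree theory directly; additionally, the degree must be computed from the boundary data in $\bd X \cong S^2$, which requires verifying that $\Phi|_{\bd X}$ has the correct topological type with respect to this map. The structural estimates of Appendix \ref{QM} should supply the necessary tools to make this argument rigorous by providing a suitable bi-Lipschitz coordinate chart in a neighborhood of $\Omega^\eps$ that translates the degree computation into an elementary one.
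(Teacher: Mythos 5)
Your overall strategy — combine a quantitative-minimality estimate at the index-two critical point $\Omega^\eps$ with a topological argument to locate a mountain-pass point in $X$ — is the right idea and is in the same spirit as the paper. But two of your intermediate claims are not what Appendix~\ref{QM} actually supplies, and the 3D degree argument, as outlined, has a genuine domain problem.

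First, the pointwise ``stable-fiber'' estimate. You posit a continuous projection $\pi_u\f\mathcal V\to\R^2$ and a quadratic lower bound for $A^\eps$ on the fiber $\{\pi_u = 0\}$. Theorem~\ref{qm} does not provide this. What it gives (following Inauen--Marchese and White) is a \emph{variational} statement about $k$-parameter families: for any $\mathbf F$-continuous family over a $k$-manifold $S$ whose boundary matches the fixed unstable family $(\Omega_v)_{v\in\bd\cl B^k_r}$, some member satisfies $A^\eps(\tilde\Omega_v)\ge A^\eps(\Omega) + C\fla(\tilde\Omega_v,\Omega)^2$. This is a global min-max statement about families, not a Morse-lemma-type decomposition of an $\mathbf F$-neighborhood into stable and unstable fibers. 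Since the space of currents is not a Banach manifold, such a pointwise stable-fiber estimate would require substantial extra machinery beyond what the Appendix proves; treating it as an ``adaptation'' understates the work.

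Second, the 3D degree argument. Even granting $\pi_u$, it is only defined on a small $\mathbf F$-neighborhood $\mathcal V$ of $\Omega^\eps$. On the top face $\{t = R\}$ of $X$ the surfaces have a macroscopic open neck and are not $\mathbf F$-close to $\Omega^\eps$; the side faces likewise contain currents far from $\Omega^\eps$ once $t$ is not small. So the map $(x,y,t)\mapsto(\pi_u(\Psi(x,y,t)),\min(\mathbf F(\Psi,\Omega^\eps),\gamma_0/2))$ is not defined on all of $\bd X$, and the degree of $\Psi$ around $(0,0,\gamma)$ cannot be computed from boundary data. You would have to extend $\pi_u$ to the whole cube in a way that preserves the degree computation, which is not addressed.

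The paper's route sidesteps both problems. Rather than locating a point where $\pi_u = 0$, it finds a two-dimensional slice of $X$ — a piecewise-linear surface $S$ on which $\fla(\Psi(\cdot),\Omega^\eps)$ is pinned between $\gamma$ and $2\gamma$, with $\bd S$ a curve in the bottom face encircling $(0,0,0)$ — via simplicial approximation of the function $(x,y,t)\mapsto\fla(\Psi(x,y,t),\Omega^\eps)$. Every current $\Psi(p)$, $p\in S$, is automatically close to $\Omega^\eps$, so no extension of $\pi_u$ is needed. After gluing in a thin $\mathbf F$-small homotopy (Theorem 3.8 of \cite{MN2}) to match $\Psi|_{\bd S}$ to the unstable family $\Phi|_{\bd S}$, the resulting 2-parameter family fits the hypotheses of Theorem~\ref{qm}(ii) exactly. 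The mountain-pass point supplied by the theorem must lie in $S$ (since the glued-in collar keeps $A^\eps$ strictly below $A^\eps(\Omega^\eps)$), and on $S$ one has $\fla(\Psi,\Omega^\eps)>\gamma$, yielding the desired uniform gap $C\gamma^2$. In short: applying Theorem~\ref{qm}(ii) to a 2-parameter slice is exactly the intended use of the Appendix and eliminates the need for a pointwise projection or a 3D degree computation.
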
 

\begin{proof}
Let $\delta > 0$ and $C>0$ be the constants from Theorem \ref{qm} applied to $\Sigma^\eps\cup \Sigma^{-\eps} = \bd \Omega^\eps$. 
Fix some $0 < \gamma < \delta/4$ and then choose a constant $\eta > 0$ to be specified later. 
Consider a map $\Psi$ as in the statement of the proposition.  
If $\eta$ is small enough, it is possible to find a piecewise linear surface $S\subset X$ such that the following properties hold. 
\begin{itemize}
\item 
$\gamma < \fla(\Psi(p), \Omega^\eps) < 2\gamma$ 
for all $p \in S$ \\
\item $\bd S$ is a connected curve in the bottom face of $X$ that encloses $(0,0,0)$. Moreover, $\dist(\bd S, (0,0,0)) > d$ for some positive constant $d$ that doesn't depend on $\Psi$. 
\end{itemize} 
This can be done, for example, by taking a suitable simplicial approximation to the function 
\[
(x,y,t)\in X \mapsto \fla(\Psi(x,y,t), \Omega^\eps).
\]
Note that $A^\eps(\Phi(p)) \le A^\eps(\Omega^\eps) - d_1$ for all $p\in \bd S$.   Here $d_1 > 0$ is a constant that does not depend on $\Psi$. 

Fix a small number $\alpha > 0$. 
By Theorem 3.8 in \cite{MN2}, if $\eta$ is small enough there exists an $\mathbf F$-continuous homotopy 
\[
H\f \bd S\times [0,1] \to \mathcal I_{n+1}(M,\Z_2)
\]
with the properties that 
\begin{itemize} 
\item $H(p,0) = \Psi(p)$ for all $p\in \bd S$, and
\item $H(p,1) = \Phi(p)$ for all $p\in \bd S$, and 
\item $
\mathbf F(H(p,s), \Phi(p)) < \alpha$ 
for all $p\in \bd S$ and all $s\in[0,1]$.  
\end{itemize}
For an appropriate choice of $\alpha$, this ensures that 
\begin{equation}
\label{z}
A^\eps(H(p,s)) \le A^\eps(\Phi(p)) + \frac{d_1}{2} < A^\eps(\Omega^\eps)
\end{equation}
for all $p\in \bd S$ and all $s\in [0,1]$.  

Now let $
S_1 = S \cup_{\bd S} (\bd S \times [0,1]) $
and define a map $\Psi_1 \f S_1 \to \mathcal I_{n+1}(M,\Z_2)$ by 
letting $\Psi_1 = \Psi$ on $S$ and letting $\Psi_1 = H$ on $\bd S\times [0,1]$.  Note that part (ii) of Theorem \ref{qm} applies to the family $\Psi_1$ parameterized by $S_1$.  Therefore, there is some point $q\in S_1$ such that 
\[
A^\eps(\Psi_1(q)) \ge A^{\eps}(\Omega^\eps) + C\fla(\Psi_1(q), \Omega^\eps)^2. 
\]
By (\ref{z}), the point $q = (x_0,y_0,t_0)$ must belong to $S$.  Thus we have exhibited a point $(x_0,y_0,t_0)\in X$ with 
\[
A^\eps(\Psi(x_0,y_0,t_0)) \ge A^\eps(\Omega^\eps) + C\gamma^2,
\]
and the proposition follows.
\end{proof} 

\begin{corollary}
\label{ntw} 
The width of $\Pi$ satisfies $L^\eps(\Pi) > A^\eps(\Omega^\eps)$. 
\end{corollary}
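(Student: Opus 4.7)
The plan is to deduce the corollary almost immediately from Proposition \ref{nt} by unpacking the definition of the $(X,\bd X)$-homotopy class. Take an arbitrary sequence $\{\Psi_i\}_i \in \Pi$; by definition there exist flat continuous homotopies $H_i\f [0,1]\times X \to \mathcal I_{n+1}(M,\Z_2)$ with $H_i(0,\cdot) = \Psi_i$, $H_i(1,\cdot) = \Phi$, and
\[
\limsup_{i\to \infty} \sup_{z\in \bd X,\ s\in [0,1]} \mathbf F(\Phi(z), H_i(s,z)) = 0.
\]
Specializing to $s = 0$ yields $\sup_{z\in \bd X} \mathbf F(\Phi(z),\Psi_i(z)) \to 0$ as $i\to \infty$.

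Fix the constants $\gamma, \eta, C > 0$ supplied by Proposition \ref{nt}. For all sufficiently large $i$ the estimate $\sup_{z \in \bd X} \mathbf F(\Phi(z),\Psi_i(z)) < \eta$ holds, so Proposition \ref{nt} produces a point $(x_i,y_i,t_i)\in X$ with
\[
A^\eps(\Psi_i(x_i,y_i,t_i)) \ge A^\eps(\Omega^\eps) + C\gamma^2.
\]
Consequently $\max_{X} A^\eps \circ \Psi_i \ge A^\eps(\Omega^\eps) + C\gamma^2$ for all large $i$, hence $L^\eps(\{\Psi_i\}_i) \ge A^\eps(\Omega^\eps) + C\gamma^2$. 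Taking the infimum over all sequences in $\Pi$ gives $L^\eps(\Pi) \ge A^\eps(\Omega^\eps) + C\gamma^2 > A^\eps(\Omega^\eps)$, which is the desired strict inequality.

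There is essentially no obstacle here once Proposition \ref{nt} is in hand; the only minor point to double-check is that the homotopy condition in the definition of the $(X,\bd X)$-homotopy class really forces $\mathbf F$-closeness of $\Psi_i$ to $\Phi$ on $\bd X$, which follows by evaluating the homotopy at $s = 0$. I will include that one-line verification explicitly so that the application of Proposition \ref{nt} is unambiguous.
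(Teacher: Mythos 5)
Your argument is correct and is exactly the paper's approach; the paper simply states the corollary is ``an immediate consequence of Proposition \ref{nt},'' and your write-up supplies precisely the routine unpacking of the $(X,\bd X)$-homotopy class and the definition of $L^\eps(\Pi)$ that justifies the word ``immediate.''
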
 

\begin{proof}
This is an immediate consequence of Proposition \ref{nt}.
\end{proof}

\subsection{Construction of the Doublings} 

Fix $(M^{n+1},g)$ and $\Sigma$ satisfying assumption (U-i).  
In this section $\eps$ will be allowed to vary, and so we write $X^\eps$, $\Phi^\eps$, and $\Pi^\eps$ to emphasize the dependence of these objects on $\eps$. 

\begin{proof} (Theorem \ref{main3}) 
Corollary \ref{ntw} shows that 
\[
L^\eps(\Pi^\eps) > \max_{(x,y,t)\in \bd X^\eps} A^\eps(\Phi^\eps(x,y,t)),
\]
and therefore $\Pi^\eps$ satisfies all the hypotheses of Theorem \ref{mm}.  
Hence min-max produces an almost embedded $\eps$-cmc hypersurface $\Lambda^\eps = \bd \Theta^\eps$ in $M$ with $A^\eps(\Theta^\eps) = L^\eps(\Pi^\eps)$ and $\ind(\Lambda^\eps) \le 3$. 

Observe that 
\[
A^{\eps}(\Phi^\eps(0,0,0)) \le L^\eps(\Pi^\eps) \le \max_{(x,y,t)\in X^\eps} A^\eps(\Phi^\eps(x,y,t)),
\]
and that both bounds for $L^\eps(\Pi^\eps)$ converge to $2\area(\Sigma)$ as $\eps \to 0$.  Therefore the area of $\Lambda^\eps$ converges to $2\area(\Sigma)$ as $\eps \to 0$. 
\end{proof}

\begin{proof} (Theorem \ref{main4}) 
Assume additionally that (U-ii) holds. 
By the compactness theorem for cmc surfaces with bounded area and index, there is an embedded minimal cycle $V$ in $M$ with $\|V\|(M) = 2\area(\Sigma)$  such that $\Lambda^\eps \to V$ as $\eps \to 0$ (up to a subsequence).   Assumption (U-ii) implies that $V = 2\Sigma$.  
\end{proof} 

\subsection{The Non-foliated Case} We close this section with some remarks on the non-foliated case.  Assume that $\Sigma\subset M$ is an index one, non-degenerate minimal hypersurface.  Let $L$ be the Jacobi operator on $\Sigma$ and let $\phi$ be the solution to $L\phi = 1$.  One can show that $\phi$ has at most two nodal domains.  In the case of exactly two nodal domains, $\phi$ changes sign and thus there is no cmc foliation of a neighborhood of $\Sigma$. 

Nevertheless, it is still possible to foliate a neighborhood of $\Sigma$ by surfaces whose mean curvature vectors point away from $\Sigma$.  Let $H$ be the mean curvature operator on $\Sigma$, and let $\zeta > 0$ be the first eigenfunction of $L$.  Then by the implicit function theorem, for every small $\beta > 0$ there is a smooth function $\psi_\beta$ on $\Sigma$ with $H(\psi_\beta) = \beta\zeta$.  The surfaces $\Sigma^\beta = \text{graph}(\psi_\beta)$ foliate a neighborhood of $\Sigma$. 

Let $x$ be a system of coordinates on $\Sigma$ and let $(x,t)$ be Fermi coordinates on a tubular neighborhood of $\Sigma$.  Let $h$ be a smooth, positive function on $M$ such that $h(x,t) = \zeta(x)$ on a tubular neighborhood of $\Sigma$.  Fix some $\eps > 0$ and note that $\Sigma^\eps$ is a critical point of the $A^{\eps h}$ functional defined by 
\[
A^{\eps h}(\Omega) = \area(\bd \Omega) - \eps \int_{\Omega} h.
\]
Using the prescribed mean curvature (pmc) min-max theory of Zhou and Zhu \cite{ZZ1} and the same arguments as above, one can show that there are $\eps h$-pmc surfaces $\Lambda^\eps$ with $\area(\Lambda^\eps) \to 2\area(\Sigma)$.  Generically these are doublings of $\Sigma$.  

\appendix

\section{Quantitative Minimality} 
\label{QM}

This appendix contains a quantitative minimality result for the $A^\eps$ functional.   This result is needed to check that the widths of the min-max families in the paper are non-trivial.    The result is based on the following theorem of Inauen and Marchese \cite{IM}.  

\begin{theorem} (\cite{IM} Theorem 4.3) 
\label{im}
Let $F$ be an elliptic parametric functional on $M^{n+1}$.  Let $\Sigma^n \subset M^{n+1}$ be a smooth, closed, hypersurface which is a non-degenerate, index $k$ critical point for $F$.  Then there are constants $r > 0$, $c>0$, $\delta > 0$, and $C > 0$ and a smooth $k$-parameter family of surfaces 
\[
(\Sigma_v)_{v\in \cl B^k_r}
\]
such that the following properties hold. 
\begin{itemize}
\item[(i)] For every $v\in \cl B^k_r$, the surface $\Sigma_v$ is homologous to $\Sigma$ and satisfies $\fla(\Sigma_v,\Sigma) < \delta$ and $F(\Sigma_v) \le F(\Sigma) -  c\vert v\vert^2.$\\

\item[(ii)] Let $S^k$ be an abstract $k$-manifold with $\bd S^k = \bd \cl B^k_r$.   Then for any continuous family of integral currents
\[
(\tilde \Sigma_v)_{v\in S},
\]
each homologous to $\Sigma$ with $\fla(\tilde \Sigma_v, \Sigma) < \delta$ for all $v\in S$ and $\tilde \Sigma_v = \Sigma_v$ for $v\in \bd S$, it holds that 
\[
\sup_{v\in S} \bigg[F(\tilde \Sigma_v) - C\fla(\tilde \Sigma_v, \Sigma)^2\bigg] \ge F(\Sigma).
\]
\end{itemize}

\begin{rem}
Let $u_1,\hdots,u_k$ be the eigenfunctions for the second variation of $F$ on $\Sigma$ with negative eigenvalues.  Let 
\[
(\psi_v)_{v\in \cl B^k_r}
\]
be a family of smooth functions on $\Sigma$ for which the map 
\[
v\in \cl B^k_r \mapsto \left(\int_\Sigma \psi_v u_1, \hdots, \int_\Sigma \psi_v u_k\right) \in \R^k
\] 
is a diffeomorphism onto a neighborhood of $0$. 
Then by inspecting the proof of Theorem 4.3 in \cite{IM} along with the proofs of Theorem 4 and Theorem 5 in \cite{W2}, one sees that it is possible to take $\Sigma_v = \text{graph}(\psi_v)$ in the above theorem.
\end{rem}  
\end{theorem}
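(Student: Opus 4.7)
The plan is to follow the approach of Inauen--Marchese \cite{IM} and White \cite{W2}, realizing the family $\Sigma_v$ as normal graphs $\mathrm{graph}(\psi_v)$ of the prescribed functions. Because $\Sigma$ is a non-degenerate index $k$ critical point of $F$, the Jacobi-type operator $L = \delta^2 F|_\Sigma$ is invertible, and has exactly $k$ negative eigenvalues with $L^2$-orthonormal eigenfunctions $u_1, \ldots, u_k$. Set $V = \mathrm{span}(u_1, \ldots, u_k)$ and let $V^\perp$ be its $L^2(\Sigma)$-orthogonal complement, on which $L$ is strictly positive.

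For part (i), with $\Sigma_v = \mathrm{graph}(\psi_v)$, I would expand
\[
F(\Sigma_v) = F(\Sigma) + \tfrac12 \langle L\psi_v, \psi_v\rangle_{L^2} + O(\|\psi_v\|_{C^{2,\alpha}}^3).
\]
Decomposing $\psi_v = \psi_v^V + \psi_v^{V^\perp}$, the diffeomorphism hypothesis on the projection to $V$ forces $\|\psi_v^V\|_{L^2} \gtrsim |v|$ for small $v$. Taking $r$ small ensures the $C^{2,\alpha}$ norm of $\psi_v$ is uniformly small, so the cubic remainder and the (bounded) positive $V^\perp$-contribution are absorbed by the strictly negative $V$-contribution, yielding $F(\Sigma_v) \le F(\Sigma) - c|v|^2$. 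The flat-distance bound $\fla(\Sigma_v,\Sigma) < \delta$ follows from the $C^0$-smallness of $\psi_v$.

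For part (ii), consider a family $(\tilde\Sigma_v)_{v\in S}$ as in the statement. Using the smallness of $\delta$ together with an Allard/Luckhaus-type regularity and approximation step for almost-minimizers of $F$, each $\tilde\Sigma_v$ admits, up to a flat perturbation of negligible size, a representation as a normal graph $\mathrm{graph}(\tilde\psi_v)$ over $\Sigma$, with $v\mapsto \tilde\psi_v$ continuous in $L^2(\Sigma)$. Define $P\colon S \to \R^k$ by $P(v) = \big(\langle \tilde\psi_v, u_1\rangle,\ldots,\langle \tilde\psi_v, u_k\rangle\big)$. On $\partial S$ we have $\tilde\psi_v = \psi_v$, so $P|_{\partial S}$ coincides with the given diffeomorphism and hence has nonzero degree onto a neighborhood of the origin. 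Consequently there exists $v_0 \in S$ with $P(v_0) = 0$, at which point $\tilde\psi_{v_0}$ lies in $V^\perp$. Strict positivity of $L$ on $V^\perp$ together with a Taylor expansion then yield $F(\tilde\Sigma_{v_0}) - F(\Sigma) \ge C' \fla(\tilde\Sigma_{v_0},\Sigma)^2$, which upon renaming $C = C'$ gives (ii).

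\textbf{Main obstacle.} The central technical difficulty is reducing flat-close integral-current competitors $\tilde\Sigma_v$ to normal graphs $\mathrm{graph}(\tilde\psi_v)$ with continuous dependence on the parameter $v$, since flat convergence by itself carries no $C^1$ control; without this reduction neither the definition of the projection $P$ nor the quadratic Taylor lower bound makes sense. This is precisely the step where one must invoke Allard's $\eps$-regularity theorem together with a Luckhaus-type graph approximation, as carried out in \cite{IM} and \cite{W2}. Adapting that machinery to the specific parametric functional $F = A^\eps$ used in the body of the paper, while only bookkeeping, is where most of the work lies.
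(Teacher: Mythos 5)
The paper does not reprove this statement; it cites \cite{IM}, Theorem 4.3, and its own adaptation to $A^\eps$ is Theorem \ref{qm}, whose proof follows the penalization scheme of \cite{IM} and \cite{W2}. Measured against that scheme, your sketch has a genuine gap in part (ii).

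Your plan for (ii) hinges on representing each competitor $\tilde\Sigma_v$, for $v$ in the interior of $S$, as a normal graph $\mathrm{graph}(\tilde\psi_v)$ over $\Sigma$, and you acknowledge this as the ``main obstacle.'' But this step cannot be carried out: the hypothesis is only that $\tilde\Sigma_v$ is an integral current with $\fla(\tilde\Sigma_v,\Sigma)<\delta$. Flat closeness alone gives no mass bound, no first-variation bound, and no almost-minimality, so Allard/Luckhaus regularity has nothing to bite on — the competitor can be wildly irregular or have huge area while staying flat-close to $\Sigma$. The actual argument avoids graphing the competitors entirely. One defines a projection $\Pi\f\mathcal I_{n+1}\to\R^k$ that is continuous in the \emph{flat} topology (e.g.\ $\Pi_i(\Theta)=\int_{\bd\Theta} f_i$ for fixed smooth forms $f_i$ extending the negative eigendirections, as with the $\vec f$ in the paper's proof of Theorem \ref{qm}), sets $G(\Theta)=F(\Theta)+K\|\Pi(\Theta)\|^2$, and proves the quantitative local minimality $G(\Theta)\ge G(\Sigma)+C\fla(\Theta,\Sigma)^2$ by a penalization/contradiction argument: one minimizes auxiliary functionals of the form $G(\cdot)+\lambda[\fla(\cdot,\Sigma)-\fla(\Theta_i,\Sigma)]^2$, and it is these \emph{minimizers} — not the arbitrary competitors $\Theta_i$ — that enjoy regularity and eventually land in a tubular neighborhood of $\Sigma$, producing the contradiction via White's local result. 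The degree-theoretic step you describe is then applied to $v\mapsto\Pi(\tilde\Sigma_v)$, which is already flat-continuous, to locate $v_0$ with $\Pi(\tilde\Sigma_{v_0})=0$; there $F(\tilde\Sigma_{v_0})=G(\tilde\Sigma_{v_0})$ and the quantitative minimality of $G$ closes the argument.

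Your treatment of part (i) is essentially correct and agrees with the remark in the statement. But for (ii), replacing your graph-based projection $P(v)=(\langle\tilde\psi_v,u_i\rangle)_i$ with a flat-continuous $\Pi$, and deriving the lower bound from quantitative minimality of the penalized functional $G$ rather than from a Taylor expansion on $V^\perp$, is not bookkeeping — it is the idea that makes the argument work for arbitrary integral-current competitors.
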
 

Unfortunately, Theorem \ref{im} does not apply directly in our setting since the $A^\eps$ functional cannot be written globally as an elliptic parametric functional.  Nevertheless, we have the following.  

\begin{theorem} 
\label{qm} 
Let $\Sigma = \bd \Omega$ be a smooth, closed, hypersurface in $M$ which is a non-degenerate, index $k$ critical point for $A^\eps$.  Then there are constants $r > 0$, $c>0$, $\delta > 0$, and $C > 0$ and a smooth $k$-parameter family of open sets 
\[
(\Omega_v)_{v\in \cl B^k_r}
\]
such that the following properties hold. 
\begin{itemize}
\item[(i)] For every $v\in \cl B^k_r$, the set $\Omega_v$ satisfies $\fla(\Omega_v,\Omega) < \delta$ and $A^\eps(\Omega_v) \le A^\eps(\Omega) -  c\vert v\vert^2.$\\

\item[(ii)] Let $S^k$ be an abstract $k$-manifold with $\bd S^k = \bd \cl B^k_r$.   Then for any $\mathbf F$ continuous family 
\[
(\tilde \Omega_v)_{v\in S}
\]
in $\mathcal I_{n+1}(M,\Z_2)$ with $\fla(\tilde \Omega_v, \Omega) < \delta$ for all $v\in S$ and $\tilde \Omega_v = \Omega_v$ for $v\in \bd S$, there is a point $v \in S$ such that
\[
\sup_{v\in S} \left[A^\eps(\tilde \Omega_v) -C \fla(\tilde \Omega_v, \Omega)^2\right] \ge A^\eps(\Omega).
\]
Moreover, the inequality is strict unless $\tilde \Omega_v = \Omega$.
\end{itemize}
Let $u_1,\hdots,u_k$ be the eigenfunctions for the Jacobi operator on $\Sigma$ with negative eigenvalues.  Let 
\[
(\psi_v)_{v\in \cl B^k_r}
\]
be a family of smooth functions on $\Sigma$ for which the map 
\[
v\in \cl B^k_r \mapsto \left(\int_\Sigma \psi_v u_1, \hdots, \int_\Sigma \psi_v u_k\right) \in \R^k
\] 
is a diffeomorphism onto a neighborhood of $0$. 
Then it is possible to choose $\Omega_v$ above so that $\bd \Omega_v = \text{graph}(\psi_v)$. 
\end{theorem}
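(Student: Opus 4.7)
The plan is to reduce Theorem \ref{qm} to Theorem \ref{im} by rewriting $A^\eps$ as an elliptic parametric functional in a neighborhood of $\Sigma$. The obstacle is that the volume term in $A^\eps$ is not parametric in $\bd\Omega$, but it can be converted to a boundary integral via Stokes' theorem in any neighborhood where the Riemannian volume form is exact.

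I would first choose a thin tubular neighborhood $U$ of $\Sigma$. Since $U$ retracts onto the $n$-dimensional manifold $\Sigma$, its top de Rham cohomology vanishes, so there exists a smooth $n$-form $\alpha$ on $U$ with $d\alpha$ equal to the Riemannian volume form. Explicit construction in Fermi coordinates shows that $\|\alpha\|_{C^0(U)}$ can be made arbitrarily small by shrinking $U$. I would then extend $\alpha$ smoothly (but otherwise arbitrarily) to all of $M$, and define
\[
F(T) = \mass(T) - \eps \int_T \alpha, \qquad T\in \mathcal I_n(M,\Z_2).
\]
Provided $\eps \|\alpha\|_{C^0} < 1/2$, which is arranged by taking $U$ thin, the integrand $|\xi| - \eps \alpha_x(\xi)$ is positive and convex in $\xi$, so $F$ is an elliptic parametric functional in the sense of \cite{IM}. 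Stokes' theorem yields the key identity
\[
F(\bd\Omega') - F(\bd\Omega) = A^\eps(\Omega') - A^\eps(\Omega)
\]
for any $\Omega'$ with $\Omega'\triangle\Omega \cc U$. In particular, $\Sigma=\bd\Omega$ is a non-degenerate, index $k$ critical point of $F$ with the same Jacobi operator as for $A^\eps$.

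Applying Theorem \ref{im} to $F$, together with its remark, produces a $k$-parameter family $\Sigma_v=\text{graph}(\psi_v)$ of the prescribed form satisfying $F(\Sigma_v)\le F(\Sigma) - c|v|^2$. Setting $\Omega_v$ to be the region enclosed by $\text{graph}(\psi_v)$, which is a small perturbation of $\Omega$ supported in $U$, the above identity yields $A^\eps(\Omega_v)\le A^\eps(\Omega) - c|v|^2$, establishing part (i); the flat-distance bound $\fla(\Omega_v,\Omega)<\delta$ follows because $\fla(\Omega_v,\Omega)$ is comparable to $\|\psi_v\|_{L^1(\Sigma)}$.

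For part (ii), I would apply statement (ii) of Theorem \ref{im} to the boundary family $\tilde\Sigma_v=\bd\tilde\Omega_v$ to obtain a point $v\in S$ where $F(\tilde\Sigma_v)\ge F(\Sigma) + C\fla(\tilde\Sigma_v,\Sigma)^2$, and then translate back. The main obstacle is that two conversions must be justified: the identity $F(\tilde\Sigma_v) - F(\Sigma) = A^\eps(\tilde\Omega_v) - A^\eps(\Omega)$ requires $\tilde\Omega_v\triangle\Omega \cc U$, which is not automatic from the flat-distance hypothesis, and the inequality $\fla(\bd\tilde\Omega_v,\Sigma)\le \fla(\tilde\Omega_v,\Omega)$ goes the wrong way. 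To handle both, I would preprocess the family $\tilde\Omega_v$ by an $\mathbf F$-continuous deformation that excises any portion of $\tilde\Omega_v\triangle\Omega$ lying outside $U$: for $\delta$ small, such a portion has boundary area much larger than $\eps$ times its volume by the isoperimetric inequality, so cutting it off (with an appropriate slicing radius for $\bd U$) strictly decreases $A^\eps$, and the constancy theorem makes the two flat distances comparable on the reduced family. Since the prescribed boundary family $\Omega_v$ is already supported in $U$, the deformation fixes $\tilde\Omega_v$ on $\bd S$. Performing this truncation in an $\mathbf F$-continuous manner over the $k$-parameter family $S$ is the principal technical step.
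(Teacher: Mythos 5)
Your strategy is genuinely different from the paper's, and unfortunately the gap you flag at the end is not a technicality but the heart of the matter. You try to reduce Theorem \ref{qm} directly to Theorem \ref{im} by building an elliptic parametric integrand $F$ that agrees with $A^\eps$ (up to a constant) on currents whose boundary lies in a thin tube $U$. The paper does not do this. Instead it redoes the \emph{proof} of Theorem \ref{im}: it introduces the penalized functional $G(\Theta) = A^\eps(\Theta) + K\left\|\int \vec f\, d\|\bd\Theta\|\right\|^2$ and reruns the penalized-minimization scheme of Inauen--Marchese and White (Lemmas \ref{c} and \ref{c2} here correspond to Lemmas 3.3--3.6 and 4.1 of \cite{IM}). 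The crucial extra input is that, because $A^\eps(\Omega_i')\le A^\eps(\Omega)$ together with flat convergence forces $\limsup\area(\bd\Omega_i')\le\area(\bd\Omega)$, the penalized minimizers converge in the $\mathbf F$-topology; the resulting uniform monotonicity then forces their boundaries into a tubular neighborhood of $\Sigma$, and only \emph{then} does the paper invoke the local parametric representation of $A^\eps$ (via \cite{W2} and Theorem 1.1 of \cite{IM}). In other words, the confinement to $U$ that your argument requires is something the paper obtains as an output of regularity theory applied to minimizers, not something it needs to impose on arbitrary competitor families.

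This is exactly why your part (ii) does not close. The hypothesis $\fla(\tilde\Omega_v,\Omega)<\delta$ controls only $\vol(\tilde\Omega_v\triangle\Omega)$; it gives no control on where $\tilde\Omega_v\triangle\Omega$ or $\bd\tilde\Omega_v$ are located, so the Stokes identity $F(\bd\tilde\Omega_v)-F(\Sigma)=A^\eps(\tilde\Omega_v)-A^\eps(\Omega)$ simply fails for the competitors you must handle. Your proposed excision is not a preprocessing step one can wave away: performing the truncation $\mathbf F$-continuously over a $k$-parameter family (with a cut radius $r(v)$ that necessarily varies with $v$) requires the full discretization-and-interpolation machinery, and no fixed slicing radius works uniformly in $v$. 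There is also a quantitative mismatch you do not address: Theorem \ref{im}(ii) produces the error term $C\fla(\tilde\Sigma_v,\Sigma)^2$ measured in the flat metric on $n$-currents, whereas Theorem \ref{qm}(ii) asserts the stronger bound with $C\fla(\tilde\Omega_v,\Omega)^2$ measured on $(n+1)$-chains. Since $\fla(\bd\tilde\Omega_v,\Sigma)\le\fla(\tilde\Omega_v,\Omega)$, your reduction would at best yield the weaker statement, which is not what is needed in Proposition \ref{nt} (there one exhibits a point where the \emph{top-dimensional} flat distance is $\ge\gamma$ and concludes $A^\eps \ge A^\eps(\Omega^\eps)+C\gamma^2$). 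Part (i) of your argument and the identification $\bd\Omega_v=\text{graph}(\psi_v)$ are fine; the essential difficulties are all in part (ii), and they are precisely the ones the paper's penalization route is designed to avoid.
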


 To prove Theorem 5.3, one essentially copies the arguments from \cite{IM} and observes that they continue to hold with $F$ replaced by $A^\eps$.   We include the details for completeness.  

\begin{proof} Let $u_1,\hdots,u_k$ be the eigenfunctions for the Jacobi operator on $\Sigma$ with negative eigenvalues.  Pick a smooth function $\vec f\f M \to \R^k$ such that 
\[
\vec f(x) = 0, \quad \text{and} \quad \grad \vec f(x) = (u_1(x),\hdots,u_k(x)) 
\]
for all $x\in \Sigma$.  Let $K$ be a very large constant and define 
\[
G(\Theta) = A^\eps(\Theta) + K \left\|\int \vec f \, d\|\bd \Theta\| \right\|^2
\]
for $\Theta\in \mathcal I_{n+1}(M,\Z_2)$.  It follows from \cite{W2} that the functional $G$ is lower-semicontinuous with respect to flat convergence, and $\Sigma = \bd \Omega$ is a strictly stable critical point of $G$. 

\begin{lem} 
\label{c}
There is some $\delta > 0$ such that $G(\Omega) < G(\Theta)$ for all $\Theta \neq \Omega$ with $\fla(\Theta,\Omega) < \delta$. 
\end{lem}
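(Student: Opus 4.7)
The plan is to argue by contradiction, following the strategy of Inauen--Marchese \cite{IM} and the underlying strict-minimality results of White \cite{W2}. Suppose there is a sequence $\Theta_j \in \mathcal I_{n+1}(M,\Z_2)$ with $\Theta_j \ne \Omega$, $\fla(\Theta_j,\Omega) \to 0$, and $G(\Theta_j) \le G(\Omega)$. The strategy is to upgrade flat convergence to smooth convergence on a suitable comparator and then invoke the smooth strict stability of $\Sigma$ as a critical point of $G$.

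The first step is mass control. The lower semicontinuity of $G$ combined with $G(\Theta_j) \le G(\Omega)$ forces $G(\Theta_j) \to G(\Omega)$. Since $\vol(\Theta_j) \to \vol(\Omega)$ by flat convergence and the penalty term $K\left\|\int \vec f\, d\|\bd \Theta_j\|\right\|^2$ is nonnegative, we obtain $\limsup_j \mathcal H^n(\bd \Theta_j) \le \mathcal H^n(\Sigma)$. Any subsequential varifold limit of $|\bd \Theta_j|$ must be supported on $\Sigma$ and have mass at most $\mathcal H^n(\Sigma)$, hence coincides with $|\Sigma|$ with unit multiplicity.

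Next, I would replace $\Theta_j$ by $\hat \Theta_j$, a minimizer of $G$ in the flat ball $\{\Theta : \fla(\Theta,\Omega) \le 2\fla(\Theta_j,\Omega)\}$. Such minimizers exist by lower semicontinuity and flat compactness. Because the penalty term is continuous in varifold norm and contributes only a lower-order perturbation, each $\hat \Theta_j$ is an Almgren almost-minimizer for $A^\eps$. Combined with the unit-multiplicity varifold limit $|\bd \hat \Theta_j| \to |\Sigma|$, Allard's $\eps$-regularity produces a graphical representation $\bd \hat \Theta_j = \operatorname{graph}(v_j)$ with $v_j \to 0$ in $C^{2,\alpha}(\Sigma)$; moreover, $G(\hat \Theta_j) \le G(\Theta_j) \le G(\Omega)$. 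The smooth strict stability of $\Sigma$ for $G$ (secured by the large choice of $K$ in the definition of $G$) then yields a Taylor expansion
\[
G(\hat \Theta_j) \ge G(\Omega) + c\,\|v_j\|_{W^{1,2}}^2
\]
for some $c > 0$ and all $j$ large, which forces $v_j \equiv 0$ and hence $\hat \Theta_j = \Omega$. In particular $G(\Theta_j) = G(\Omega) = G(\hat \Theta_j)$, so $\Theta_j$ itself is also a $G$-minimizer in the same flat ball; repeating the Allard--expansion argument with $\Theta_j$ in place of $\hat \Theta_j$ gives $\Theta_j = \Omega$, contradicting the hypothesis $\Theta_j \ne \Omega$.

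The main obstacle is the regularity step: promoting flat (and varifold) convergence to $C^{2,\alpha}$-convergence via Allard's $\eps$-regularity for almost-minimizers, and simultaneously controlling the change in the nonlocal penalty term when passing from $\Theta_j$ to $\hat \Theta_j$. Once these regularity inputs are in hand, the remaining argument is a direct combination of the lower semicontinuity and strict stability that are already in place by the construction of $G$.
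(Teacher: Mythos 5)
Your skeleton matches the paper's (which follows Inauen--Marchese): derive a contradiction from competitors $\Theta_j \to \Omega$ with $G(\Theta_j) \le G(\Omega)$ by replacing them with better minimizers, upgrading to graphical regularity, and invoking White's strict minimality. The mass-control and varifold-convergence preliminaries are fine. The gap lies in the replacement step. Minimizing $G$ over the shrinking flat ball $B_j := \{\Theta : \fla(\Theta,\Omega) \le 2\fla(\Theta_j,\Omega)\}$ does not yield almost-minimality at $j$-independent scales. If $\hat\Theta_j$ saturates the constraint (which cannot be ruled out a priori --- ruling it out is essentially the lemma), then a competitor obtained by modifying $\hat\Theta_j$ inside a fixed ball $B_r(x)$ may exit $B_j$, since such a modification changes $\fla(\cdot,\Omega)$ by an amount of order $r^{n+1}$ independently of $j$, whereas the radius of $B_j$ tends to $0$; even when the constraint is slack the comparison only works at scales $r$ with $r^{n+1}\lesssim\fla(\Theta_j,\Omega)\to 0$. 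Consequently there is no $j$-uniform almost-minimality constant, Allard's $\varepsilon$-regularity does not produce a graph $v_j$ over $\Sigma$ with $j$-uniform $C^{1,\alpha}$ (let alone $C^{2,\alpha}$) bounds, there is no uniform first-variation bound to run monotonicity and the tubular-neighborhood argument, and the Taylor expansion has nothing to act on. The same objection defeats your final step, where you run the Allard argument on $\Theta_j$ itself.

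The paper sidesteps this by replacing the hard constraint with a Lipschitz penalty: it sets $G_i(\Theta) = G(\Theta) + \lambda\,\vert\fla(\Theta,\Omega) - \fla(\Omega_i,\Omega)\vert$ with a fixed $\lambda$ and minimizes over all of $\mathcal I_{n+1}(M,\Z_2)$. Because the penalty is $\lambda$-Lipschitz in flat distance, a minimizer $\Omega_i'$ of $G_i$ is a genuine $\lambda$-almost-minimizer of $G$ (hence of $A^\eps$, hence of area) at every scale, with constants independent of $i$; this gives the uniform first-variation bound, the monotonicity formula, eventual inclusion of $\bd\Omega_i'$ in a tubular neighborhood of $\Sigma$, and White's strict inequality $G(\Omega_i') > G(\Omega)$, which contradicts $G(\Omega_i') \le G_i(\Omega_i) \le G(\Omega)$. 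The paper also proves the linear lower bound $G(\Omega) - G(\Theta) \le C\,\fla(\Omega,\Theta)$, which guarantees (for $\lambda > C$) that $\Omega$ is the unique minimizer of the limiting penalized functional $G + \lambda\,\fla(\cdot,\Omega)$, so that $\Omega_i'$ actually converges to $\Omega$; your argument has no substitute for this ingredient either. To repair your proof, swap the flat-ball constraint for the Lipschitz penalization and supply the linear lower-bound lemma.
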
 

\begin{proof} Suppose for contradiction that this is not the case.  Then there are sets $\Omega_i \neq \Omega$ with $\fla(\Omega_i,\Omega)\to 0$ and $G(\Omega_i) \le G(\Omega)$.  Define 
\[
G_i(\Theta) = G(\Theta) + \lambda \vert \fla(\Theta,\Omega) -\fla(\Omega_i,\Omega)\vert,
\]
where $\lambda>0$ is a constant to be specified later.  Let $\Omega_i'$ be a minimizer of $G_i$.  Passing to a subsequence, $\Omega_i' \to \Omega'$ in the flat topology.  The proof of Lemma 3.3 in \cite{IM} applies verbatim to show that $\Omega'$ minimizes 
\[
G_0(\Theta) = G(\Theta) + \lambda \vert \fla(\Theta,\Omega)\vert
\]
over all $\Theta \in \mathcal I_{n+1}(M,\Z_2)$. 

Next one verifies the analog of Lemma 3.5 in \cite{IM}. 

\begin{lem}
There are constants $\delta > 0$ and $C>0$ such that 
\[
G(\Omega) - G(\Theta) \le C\fla(\Omega,\Theta)
\]
for all $\Theta \in \mathcal I_{n+1}(M,\Z_2)$. 
\end{lem}

\begin{proof}
Note that 
\begin{align*}
G(\Omega) &- G(\Theta) \\
& = [\area(\bd \Omega) - \area(\Theta)] - \eps[\vol(\Omega) - \vol(\Theta)] - K \left\|\int \vec f \, d\|\bd \Theta\| \right\|^2\\
&\le [\area(\bd \Omega) - \area(\Theta)] - \eps[\vol(\Omega) - \vol(\Theta)]\\
&\le [\area(\bd \Omega) - \area(\Theta)] + \eps\fla(\Omega,\Theta). \phantom{\int}
\end{align*}
By Lemma 3.5 in \cite{IM}, there is a constant $C$ such that $\area(\bd \Omega) - \area(\bd \Theta) \le C\fla(\Omega,\Theta)$, and the lemma follows.
\end{proof}

The proof of Lemma 3.6 in \cite{IM} now applies verbatim to show that $\Omega$ is the only minimizer of $G_0$.  Thus the minimizers $\Omega_i'$ converge to $\Omega$ in the flat topology.  We claim that in fact $\Omega_i' \to \Omega$ in the $\mathbf F$-topology. Indeed, since $\Omega_i'$ minimizes $G_i$, there is an inequality
\begin{align*}
G(\Omega_i') + \lambda\vert \fla(\Omega_i',\Omega)-\fla(\Omega_i,\Omega)\vert \le G_i(\Omega_i) = G(\Omega_i) \le G(\Omega).
\end{align*} 
This implies that
\[
\area(\bd \Omega_i') - \eps\vol(\Omega_i') \le \area(\bd \Omega) - \eps\vol(\Omega),
\]
and it follows that 
\[
\limsup \area(\bd \Omega_i') \le \area(\bd \Omega)
\]
since $\vol(\Omega_i')\to \vol(\Omega)$. This proves the $\mathbf F$-convergence. 

Now observe that the varifolds $\vert \Omega_i'\vert$ have uniformly bounded first variation.  This implies that they satisfy a monotonicity formula with uniform constants.  Since $\Omega_i' \to \Omega$ in the $\mathbf F$-topology, it follows that $\bd \Omega_i'$ is eventually contained in a tubular neighborhood of $\Sigma$. According to White \cite{W2}, this implies that $G(\Omega_i') > G(\Omega)$, and this is a contradiction.  This establishes Lemma \ref{c}.  
\end{proof} 

\begin{lem}
\label{c2}
There are constants $\delta > 0$ and $C>0$ such that 
\[
G(\Omega) \le G(\Theta) + C\fla(\Omega,\Theta)^2
\]
for all $\Theta\in \mathcal I_{n+1}(M,\Z_2)$ with $\fla(\Omega,\Theta) < \delta$.
\end{lem}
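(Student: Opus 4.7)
The stated bound
\[
G(\Omega) \le G(\Theta) + C\,\fla(\Omega,\Theta)^2
\]
is equivalent to the quadratic-deficit estimate $G(\Omega) - G(\Theta) \le C\,\fla(\Omega,\Theta)^2$, i.e.\ an upper bound on how much $G(\Theta)$ can fall below $G(\Omega)$ when $\Theta$ is flat-close to $\Omega$. The plan is to derive this directly from the strict local minimality already established in Lemma~\ref{c}, which turns out to give the required bound for free with considerable slack on the right-hand side.

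Concretely, Lemma~\ref{c} furnishes a $\delta_0 > 0$ with the property that $G(\Theta) \ge G(\Omega)$ for every $\Theta \in \mathcal{I}_{n+1}(M,\Z_2)$ with $\fla(\Omega,\Theta) < \delta_0$ (with equality only when $\Theta = \Omega$). I would take $\delta := \delta_0$ and any $C>0$ (for instance $C = 1$). Then, for any $\Theta$ with $\fla(\Omega,\Theta) < \delta$,
\[
G(\Omega) - G(\Theta) \le 0 \le C\,\fla(\Omega,\Theta)^2,
\]
which rearranges into the stated inequality. No quantitative control of the constant $C$ is demanded by the statement, so no further analysis of the second variation of $G$ is needed.

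Accordingly, there is no genuine obstacle in the argument: the lemma is a weakening of Lemma~\ref{c}, obtained by permitting an $O(\fla^2)$ tolerance on the minimality. Its utility, I expect, will come later when it is paired with a matching lower bound $G(\Theta) \ge G(\Omega) + c\,\fla(\Omega,\Theta)^2$ (the genuine quadratic growth at the non-degenerate critical point $\Omega$), so that the two together provide a two-sided quadratic control of $G$ near $\Omega$ in the spirit of the quantitative minimality results that give Theorem~\ref{qm}.
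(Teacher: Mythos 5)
You have proved the inequality exactly as printed, but the printed inequality has a sign error, and the statement the paper actually needs and proves is the reverse-signed \emph{quadratic growth} estimate
\[
G(\Theta) \ \ge\ G(\Omega) + C\,\fla(\Omega,\Theta)^2
\qquad\text{(equivalently } G(\Omega)\le G(\Theta)-C\,\fla(\Omega,\Theta)^2\text{)}.
\]
You can see this from how the lemma is used: Corollary \ref{qmss} (deduced from it) asserts $A^\eps(\tilde\Omega)\ge A^\eps(\Omega)+C\fla(\tilde\Omega,\Omega)^2$, and Theorem \ref{qm}(ii) is likewise a growth-type bound obtained, following \cite{W2} and \cite{IM}, by finding a parameter where the penalization term $K\|\int\vec f\,d\|\bd\Theta\|\|^2$ vanishes so that $A^\eps=G$ there. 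The deficit bound you establish is, as you yourself observe, strictly weaker than Lemma \ref{c} and carries no information; the ``matching lower bound'' you defer to later \emph{is} this lemma. So the proposal misses the entire content of the statement.

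The growth estimate genuinely does not follow from Lemma \ref{c}: strict local minimality gives no rate. The paper's proof is a penalization/compactness argument in the style of Section 4 of Inauen--Marchese. One assumes failure along a sequence $\Omega_i\to\Omega$ in the flat norm, minimizes the penalized functionals $H_i(\Theta)=G(\Theta)+\lambda[\fla(\Theta,\Omega)-\fla(\Omega_i,\Omega)]^2$, and shows (using Lemma \ref{c} and the Lipschitz bound $G(\Omega)-G(\Theta)\le C\fla(\Omega,\Theta)$) that for $\lambda$ large the minimizers $\Omega_i'$ converge to $\Omega$ first in the flat topology and then in the $\mathbf F$-topology. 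The uniform first-variation bound and the monotonicity formula then force $\bd\Omega_i'$ into a tubular neighborhood of $\Sigma$, where $A^\eps$ \emph{can} be written as an elliptic parametric functional (the volume form is locally exact), so that the local quantitative minimality of \cite{IM} (Theorem 1.1) applies and yields a contradiction. None of these steps can be bypassed by citing Lemma \ref{c} alone.
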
 

\begin{proof} 
Think of $C > 0$ as a fixed constant to be chosen later.   Suppose for contradiction that the claim fails.  Then there are sets $\Omega_i\neq \Omega$ with $\fla(\Omega_i,\Omega)\to 0$ and 
\[
G(\Omega_i) + C\fla(\Omega_i,\Omega)^2 \le G(\Omega) 
\]
Define 
\[
H_i(\Theta) = G(\Theta) + \lambda[\fla(\Theta,\Omega) - \fla(\Omega_i,\Omega)]^2
\]
where $\lambda > 0$ is a constant to be specified later.  Let $\Omega_i'$ be a minimizer of $H_i$.  Passing to a subsequence, $\Omega_i' \to \Omega'$ in the flat topology. The proof of Lemma 4.1 in \cite{IM} applies verbatim to show that $\Omega'$ minimizes 
\[
H_0(\Theta) = G(\Theta) + \lambda \fla(\Theta,\Omega)^2
\]
over all $\Theta\in \mathcal I_{n+1}(M,\Z_2)$. 

We claim that $\Omega$ is the unique minimizer of $H_0$ provided $\lambda$ is large enough.  Suppose for contradiction that there is some $\Omega_1 \neq \Omega$ with $H_0(\Omega_1) \le H_0(\Omega)$.  Then 
\begin{align*}
G(\Omega_1) + \lambda \fla(\Omega_1,\Omega)^2 \le A^\eps(\Omega)
\end{align*}
which implies that 
\[
\fla(\Omega_1,\Omega)^2 \le \frac{A^\eps(\Omega) - A^\eps(\Omega_1)}{\lambda} \le \frac{A^\eps(\Omega) + \eps \vol(M)}{\lambda}. 
\]
In particular, if $\lambda$ is large enough then Claim \ref{c} applies to $\Omega_1$ and so $G(\Omega_1) > G(\Omega)$. This is a contradiction. 

Since $\Omega$ is the unique minimizer of $H_0$, it follows that $\Omega_i' \to \Omega$ in the flat topology.  The same argument as above shows that this convergence is actually in the $\mathbf F$-topology.  Again the varifolds $\vert \Omega_i'\vert$ satisfy a monotonicity formula with uniform constants and hence are eventually contained in a tubular neighborhood of $\Sigma$.  This contradicts Theorem 1.1 in \cite{IM} since the $A^\eps$ functional can locally be written as an elliptic parametric functional.  (This is because the volume form $\omega$ on $M$ is exact in a tubular neighborhood of $\Sigma$.)  This establishes Lemma \ref{c2}.
\end{proof}

Finally Theorem \ref{qm} follows from Lemma \ref{c2} as explained in \cite{W2}. 
\end{proof} 

Note that Theorem \ref{qm} has the following corollary. 

\begin{corollary}
\label{qmss}
Let $\Sigma = \bd \Omega$ be a smooth, closed, $\eps$-cmc in $M$ which is strictly stable for $A^\eps$.  Then there are constants $\delta > 0$ and $C> 0$ such that every $\tilde \Omega \in \mathcal I_{n+1}(M,\Z_2)$ with $\fla(\tilde \Omega,\Omega) < \delta$ satisfies 
$
A^\eps(\tilde \Omega) \ge A^\eps(\Omega) + C\fla(\tilde\Omega,\Omega)^2.
$
\end{corollary}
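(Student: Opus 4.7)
The plan is to apply Theorem \ref{qm} in the degenerate case $k = 0$. First I would observe that strict stability of $\Sigma = \bd \Omega$ for $A^\eps$ means precisely that the second variation operator of $A^\eps$ at $\Omega$ (which is the usual Jacobi operator on $\Sigma = \bd \Omega$ modulated by the cmc condition) is positive definite, so it has no non-positive eigenvalues. In particular, $\Omega$ is a non-degenerate, index zero critical point of $A^\eps$, placing us in the hypotheses of Theorem \ref{qm} with $k = 0$.

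With $k = 0$, the closed ball $\cl B^0_r$ collapses to a single point, so the family $(\Omega_v)_{v \in \cl B^0_r}$ reduces to $\Omega$ itself, and $\bd \cl B^0_r = \emptyset$. To apply part (ii), I would take the abstract $0$-manifold $S^0$ to be a single point $\{\ast\}$, so that $\bd S^0 = \emptyset = \bd \cl B^0_r$ and the boundary matching condition is vacuous. Given any $\tilde \Omega \in \mathcal I_{n+1}(M, \Z_2)$ with $\fla(\tilde \Omega, \Omega) < \delta$, I set $\tilde \Omega_\ast = \tilde \Omega$; the family consisting of this single set is trivially $\mathbf F$-continuous. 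Part (ii) of Theorem \ref{qm} then yields directly
\[
A^\eps(\tilde \Omega) - C\fla(\tilde \Omega, \Omega)^2 \ge A^\eps(\Omega),
\]
which is exactly the inequality claimed in the corollary.

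There is essentially no obstacle here: the entire content of the corollary is already packaged inside Theorem \ref{qm}, and the only verification needed is that ``strictly stable for $A^\eps$'' is the same as ``non-degenerate of index $0$ for $A^\eps$.'' This is standard since the spectrum of the second variation operator is discrete and bounded below, so strict positivity of the quadratic form is equivalent to strict positivity of the lowest eigenvalue, which rules out both degeneracy and any negative directions.
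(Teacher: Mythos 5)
Your proof is correct and follows exactly the intended route: the paper states Corollary~\ref{qmss} without further comment as an immediate consequence of Theorem~\ref{qm}, and the only sensible way to extract it is to specialize to $k=0$ as you do, noting that strict stability is equivalent to being a non-degenerate index-zero critical point. Your handling of the degenerate boundary condition ($\bd \cl B^0_r = \emptyset = \bd S^0$) is the right way to make part~(ii) of the theorem apply to a single current.
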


\section{generic metrics}
\label{gm} 

It is natural to ask whether assumption (U-ii) poses a significant restriction to the applicability of Theorem \ref{main4}.  The following proposition addresses this question.  It shows that assumption (U-ii) holds for a generic set of metrics $g$ on $M$. 

\begin{prop}
\label{g}
Let $M$ be a closed manifold.  There is a (Baire) generic set $\mathcal G$ of smooth metrics on $M$ with the following property:  if $g\in \mathcal G$ then for any closed, connected, embedded minimal hypersurface $\Sigma$ in $(M,g)$ the varifold $2\Sigma$ is the only embedded minimal cycle in $(M,g)$ with area $2\area(\Sigma)$. 
\end{prop}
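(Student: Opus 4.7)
The plan is to prove the stronger statement that for a Baire generic set of metrics $g$ on $M$, the areas of all closed, connected, embedded minimal hypersurfaces in $(M, g)$ are $\Q$-linearly independent. This implies (U-ii): if $\Sigma$ is closed, connected, and embedded minimal and $V = a_1 \Gamma_1 + \cdots + a_\ell \Gamma_\ell$ is an embedded minimal cycle with $\|V\|(M) = 2\area(\Sigma)$, then (after subdividing the $\Gamma_j$ into connected components if needed) one has $\sum_j a_j \area(\Gamma_j) = 2\area(\Sigma)$, a $\Z$-linear relation among the areas of pairwise distinct closed connected embedded minimal hypersurfaces. $\Q$-linear independence combined with $a_j > 0$ forces $\ell = 1$, $\Gamma_1 = \Sigma$, and $a_1 = 2$, so $V = 2\Sigma$.

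The starting point is White's bumpy metric theorem: the set $\mathcal{B}$ of bumpy metrics---those for which every closed embedded minimal hypersurface is non-degenerate---is Baire generic. For any $g_0 \in \mathcal{B}$, the implicit function theorem produces, for each closed embedded minimal $\Sigma_0 \subset (M, g_0)$, a smooth map $g \mapsto \Sigma(g)$ on a neighborhood of $g_0$ with $\Sigma(g_0) = \Sigma_0$ and $\Sigma(g)$ minimal in $(M, g)$. The compactness theorem for embedded minimal hypersurfaces of bounded area and index (valid in the dimension range of the paper) implies that for each $R, k \in \N$ there are only finitely many closed embedded minimal hypersurfaces $\Sigma_1, \ldots, \Sigma_N$ in $(M, g_0)$ with $\area(\Sigma_i) \le R$ and $\ind(\Sigma_i) \le k$.

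For each nonzero $(a_1, \ldots, a_N) \in \Z^N$, consider $F(g) = \sum_i a_i \area_g(\Sigma_i(g))$ on a neighborhood of $g_0$ in $\mathcal{B}$. Since each $\Sigma_i$ is minimal at $g_0$, the first variation formula gives
\[
DF(g_0)(h) = \frac{1}{2}\sum_{i=1}^N a_i \int_{\Sigma_i} \operatorname{tr}_{\Sigma_i}(h) \, dv_{\Sigma_i}
\]
for any symmetric $2$-tensor $h$ on $M$. Since the $\Sigma_i$ are pairwise distinct closed embedded hypersurfaces, each $\Sigma_i$ contains an open set lying off $\bigcup_{j \neq i}\Sigma_j$ (by unique continuation for minimal hypersurfaces), so one may choose $h$ supported near such a point of some $\Sigma_i$ with $a_i \neq 0$, making $DF(g_0)(h) \neq 0$. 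Hence $\{F = 0\}$ is closed and nowhere dense near $g_0$. Defining $\mathcal{G}_{R, k} \subset \mathcal{B}$ as the set of bumpy metrics for which the areas of all closed, connected, embedded minimal hypersurfaces with area $\le R$ and index $\le k$ are $\Q$-linearly independent, one concludes that $\mathcal{G}_{R, k}$ is open and dense in $\mathcal{B}$; the desired set is then $\mathcal{G} = \bigcap_{R, k \in \N} \mathcal{G}_{R, k}$, which is Baire generic.

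The main obstacle will be the bookkeeping needed to pass from the local codimension-one condition near a fixed bumpy metric to the density of $\mathcal{G}_{R, k}$ in the full space of metrics. This requires covering $\mathcal{B}$ by countably many open subsets on which the minimal hypersurfaces of bounded area and index come in persistent smooth families indexed by a fixed finite set, and then applying Baire's theorem carefully---this is standard but somewhat delicate, relying on the compactness theorem and the openness of non-degeneracy under bumpy perturbations.
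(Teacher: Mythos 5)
Your approach is in the same spirit as the paper's but is more self-contained, and there is one technical inaccuracy worth flagging. The paper proves density of its sets $\mathcal G_{C,I}$ by citing Marques--Neves Proposition 8.6, which already delivers $\Q$-linear independence of the areas of the (finitely many, non-degenerate) surfaces in $\mathcal M_{C,I}$; the paper then downgrades to the weaker, \emph{finitary} condition (iii), precisely because (iii) is stable under perturbation (the areas lie in $[\kappa, C]$ for some $\kappa>0$, so only finitely many nonnegative tuples $(a_1,\dots,a_N)$ can satisfy $\sum a_i\area(\Sigma_i)=2\area(\Sigma_j)$), and is therefore \emph{open} after invoking Sharp's compactness. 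You instead re-derive the density from scratch via a first-variation/transversality argument (which is morally what underlies Marques--Neves 8.6), and you work directly with full $\Q$-linear independence. That part of your argument is sound: at a bumpy metric the minimal surfaces of bounded area and index continue smoothly, the formula $DF(g)(h)=\tfrac12\sum_i a_i\int_{\Sigma_i(g)}\operatorname{tr}_{\Sigma_i(g)}(h)$ is correct because the variation of $\Sigma_i(g)$ contributes nothing at a minimal surface, and unique continuation lets you localize $h$ to isolate one $\Sigma_i$. The inaccuracy is the claim that $\mathcal G_{R,k}$ is ``open and dense'' in $\mathcal B$: $\Q$-linear independence is a \emph{countable} intersection of open conditions $\{F_{\vec a}\neq 0\}$ over nonzero $\vec a\in\Z^N$, and it is not stable under perturbation (e.g.\ a small perturbation can make two areas rationally commensurable). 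So your $\mathcal G_{R,k}$ is a dense $G_\delta$ (residual), not open; since a countable intersection of residual sets is residual, the final conclusion that $\mathcal G$ is Baire generic survives, but the intermediate claim should be corrected. Finally, the bookkeeping you acknowledge---covering $\mathcal B$ by countably many neighborhoods on which the finite family $\Sigma_1(g),\dots,\Sigma_N(g)$ is coherently defined, and accounting for $\mathcal B$ itself being only residual---is a genuine chore; the paper sidesteps it cleanly by working with the open condition (iii) in the space of all metrics rather than with $\Q$-linear independence inside $\mathcal B$.
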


Proposition \ref{g} is a corollary of the following result of Marques and Neves \cite{MN2}.  Given a metric $g$ on $M$ and $C >0$ and $I\in \N$, let $\mathcal M_{C,I}(g)$ denote the collection of all closed, connected, embedded minimal hypersurfaces in $(M,g)$ with area at most $C$ and index at most $I$.

\begin{prop}
\label{a} 
(\cite{MN2} Proposition 8.6) Let $g$ be a bumpy metric on $M$, and fix $C > 0$ and $I \in \N$.   There exist metrics $\tilde g$ arbitrarily close to $g$ in the smooth topology such that the following properties hold. 
\begin{itemize}
\item[(i)] The set $\mathcal M_{C,I}(\tilde g) = \{\Sigma_1,\hdots,\Sigma_N\}$ is finite and every surface in $\mathcal M_{C,I}(\tilde g)$ is non-degenerate.\\
\item[(ii)] The areas $\area_{\tilde g}(\Sigma_1)$, $\hdots$, $\area_{\tilde g}(\Sigma_N)$ are linearly independent over $\Q$. 
\end{itemize}
\end{prop}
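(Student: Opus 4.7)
The plan is to define $\mathcal{G}$ as the set of metrics $g$ on $M$ satisfying the following: for every pair $(C, I) \in \mathbb{N}^2$, the collection $\mathcal{M}_{C,I}(g)$ is finite, every surface in $\mathcal{M}_{C,I}(g)$ is non-degenerate, and the areas of its elements are linearly independent over $\mathbb{Q}$. I will then show that $\mathcal{G}$ is a dense $G_\delta$ set (hence Baire generic) and that it satisfies the stated property.

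First I would establish that $\mathcal{G}$ is a dense $G_\delta$. Fix $(C,I) \in \mathbb{N}^2$ and let $\mathcal{G}_{C,I}$ denote the set of metrics satisfying the three conditions for this specific $(C,I)$. Density of $\mathcal{G}_{C,I}$ follows from Proposition \ref{a}, combined with the standard fact that bumpy metrics are $C^\infty$-dense (White). For the $G_\delta$ structure, fix $g_0 \in \mathcal{G}_{C,I}$ with $\mathcal{M}_{C,I}(g_0) = \{\Sigma_1, \ldots, \Sigma_N\}$, each non-degenerate. Sharp's compactness theorem (in dimensions $3 \le n+1 \le 7$) forbids extra minimal surfaces of area $\le C$ and index $\le I$ from appearing under small $C^\infty$ perturbations of $g_0$, since any such surface would have to accumulate onto some $\Sigma_i$, which is ruled out by non-degeneracy via the implicit function theorem. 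The implicit function theorem further provides a unique smooth perturbation $\Sigma_i(g)$ of each $\Sigma_i$ for $g$ near $g_0$. Hence $|\mathcal{M}_{C,I}(g)|$ is locally constant and the areas $A_1(g), \ldots, A_N(g)$ depend continuously on $g$. For each $\vec{c} \in \mathbb{Z}^N \setminus \{0\}$, the condition $\sum_i c_i A_i(g) \neq 0$ is open, and $\mathbb{Q}$-linear independence is the intersection over all such $\vec{c}$, a countable intersection of open conditions. Thus $\mathcal{G}_{C,I}$ is a dense $G_\delta$, and so is $\mathcal{G} = \bigcap_{(C,I)} \mathcal{G}_{C,I}$ by the Baire category theorem.

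Next I would verify the property. Fix $g \in \mathcal{G}$, a closed connected embedded minimal hypersurface $\Sigma$ of area $A$, and an embedded minimal cycle $V = a_1 \Gamma_1 + \ldots + a_\ell \Gamma_\ell$ with $\|V\|(M) = 2A$. Each $\Gamma_i$ has area at most $2A$ (as $a_i \ge 1$), so setting $I^* = \max\{\text{ind}(\Sigma), \text{ind}(\Gamma_1), \ldots, \text{ind}(\Gamma_\ell)\}$, all of $\Sigma, \Gamma_1, \ldots, \Gamma_\ell$ lie in $\mathcal{M}_{2A, I^*}(g)$, and hence their areas are $\mathbb{Q}$-linearly independent. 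The mass relation reads
\[
-2\area(\Sigma) + \sum_{i=1}^\ell a_i \area(\Gamma_i) = 0.
\]
Since the $\Gamma_i$ are pairwise disjoint, at most one can coincide with $\Sigma$. If none does, then $\Sigma, \Gamma_1, \ldots, \Gamma_\ell$ are all distinct elements of $\mathcal{M}_{2A, I^*}(g)$, and the nonzero coefficient $-2$ contradicts linear independence. So, say, $\Gamma_1 = \Sigma$, and the relation becomes $(a_1 - 2)\area(\Sigma) + \sum_{i=2}^\ell a_i \area(\Gamma_i) = 0$ with $\Sigma, \Gamma_2, \ldots, \Gamma_\ell$ pairwise distinct. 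Linear independence forces $a_1 = 2$ and $a_i = 0$ for $i \ge 2$, which together with $a_i \ge 1$ yields $\ell = 1$. Therefore $V = 2\Sigma$, as required.

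The main obstacle is the $G_\delta$ argument for $\mathcal{G}_{C,I}$: one must carefully combine Sharp's compactness theorem with the implicit function theorem to conclude that $\mathcal{M}_{C,I}(g)$ varies continuously as a finite family of non-degenerate surfaces under small perturbations of $g \in \mathcal{G}_{C,I}$. Once this stability is in hand, both the density (supplied by Proposition \ref{a}) and the algebraic step eliminating alternatives to $V = 2\Sigma$ are routine.
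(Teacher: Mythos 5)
Your proposal does not actually prove the statement in question. Proposition \ref{a} is a perturbation result: starting from a \emph{bumpy} metric $g$ and fixed $C,I$, one must exhibit metrics $\tilde g$ arbitrarily close to $g$ for which $\mathcal M_{C,I}(\tilde g)$ is finite, non-degenerate, and has $\Q$-linearly independent areas. What you prove instead is the downstream genericity statement (Proposition \ref{g} in the paper: a Baire-generic set of metrics for which $2\Sigma$ is the only minimal cycle of area $2\area(\Sigma)$), and in doing so you write ``Density of $\mathcal G_{C,I}$ follows from Proposition \ref{a}'' --- i.e., you invoke the very statement you were asked to establish. The argument is therefore circular as a proof of Proposition \ref{a}. (In the paper this proposition carries no proof at all; it is imported from Marques--Neves \cite{MN2}, and your openness-plus-density-plus-Baire argument is essentially the paper's proof of Proposition \ref{g}, not of Proposition \ref{a}.)

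The missing content is precisely the density step you outsourced. For a bumpy metric, finiteness and non-degeneracy of $\mathcal M_{C,I}(g)$ already follow from White's bumpy metric theorem together with Sharp's compactness, so part (i) is essentially free; the substance is part (ii). One must show the areas $\area(\Sigma_1),\hdots,\area(\Sigma_N)$ can be perturbed \emph{independently} by small metric deformations while each $\Sigma_i$ persists (via non-degeneracy and the implicit function theorem) as a nearby minimal surface $\Sigma_i(\tilde g)$ and no new elements of $\mathcal M_{C,I}$ appear. The key computation is that for a conformal deformation $g_t=(1+t\phi)g$ the derivative of $t\mapsto \area_{g_t}(\Sigma_i(g_t))$ at $t=0$ equals $\tfrac n2\int_{\Sigma_i}\phi\,d\mu_{g}$, the variation of the surface itself contributing nothing to first order since $\Sigma_i$ is a critical point of area; choosing $\phi$ supported near a point of $\Sigma_i$ disjoint from the other $\Sigma_j$ then moves $\area(\Sigma_i)$ without moving the other areas, and iterating lets one dodge the countably many rational relations. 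None of this appears in your proposal, so as a proof of Proposition \ref{a} it has a genuine gap; as a proof of Proposition \ref{g} it matches the paper's argument.
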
 

\begin{rem}
Note that property (ii) above immediately implies the following weaker property.
\begin{itemize}
\item[(iii)] Let $A = a_1\area_{\tilde g}(\Sigma_1) + \hdots + a_N \area_{\tilde g}(\Sigma_N)$ for some integers $a_i \ge 0$.  If $A = 2\area_{\tilde g}(\Sigma_i)$ for some $i$ then $a_i = 2$ and all the other $a_j$'s are zero.  
\end{itemize}
\end{rem}

\begin{proof}
(Proposition \ref{g}) Given $C > 0$ and $I\in \N$, let $\mathcal G_{C,I}$ be the collection of all metrics $g$ on $M$ for which properties (i) and (iii) above hold (with $g$ in place of $\tilde g$).  We claim that $\mathcal G_{C,I}$ is open and dense in the set of all smooth metrics on $M$. 

First we show that $\mathcal G_{C,I}$ is open.  Fix some $g\in \mathcal G_{C,I}$ and write 
\[
\mathcal M_{C,I}(g) = \{\Sigma_1,\hdots,\Sigma_N\}.
\]
Since every surface in $\mathcal M_{C,I}(g)$ is non-degenerate, there is a neighborhood $U$ of $g$ such that for any $\tilde g \in U$ and any $i=1,\hdots,N$ there is a unique minimal surface $\Sigma_i(\tilde g)$ in $(M,\tilde g)$
that is smoothly close to $\Sigma_i$.  Moreover, these surfaces $\Sigma_i(\tilde g)$ are all non-degenerate.  By Sharp's compactness theorem \cite{S}, it follows that there is a potentially smaller neighborhood $U_1$ of $g$ such that 
\[
\mathcal M_{C,I}(\tilde g) \subseteq \{\Sigma_1(\tilde g),\hdots,\Sigma_N(\tilde g)\}
\]
for all $\tilde g\in U_1$.  Taking an even smaller neighborhood $U_2$ of $g$, it is then possible to ensure that condition (iii) holds for all $\tilde g \in U_2$. 

Next we show that $\mathcal G_{C,I}$ is dense.  Consider any metric $g$ on $M$.  Since bumpy metrics are dense, there is a bumpy metric $g_1$ on $M$ arbitrarily close to $g$.  Applying Proposition 3.2 to $g_1$ then yields $g_2 \in \mathcal G_{C,I}$ that is arbitrarily close to $g_1$.   Thus there is a metric $g_2\in \mathcal G_{C,I}$ arbitrarily close to $g$ in the smooth topology. 

To conclude the proof, take sequences $C_n \to \infty$ and $I_n\to \infty$ and define 
\[
\mathcal G = \bigcap_n \mathcal G_{C_n,I_n}. 
\]
Then $\mathcal G$ is Baire generic, and every metric $g\in \mathcal G$ satisfies the conclusion of Proposition \ref{g}.
\end{proof}

\bibliographystyle{plain}
\bibliography{cmc-doubling}

\end{document}